\newcommand{\C}{\mathbb C}
\newcommand{\R}{\mathbb R}
\newcommand{\Z}{\mathbb Z}
\renewcommand{\P}{\mathbb P}
\newcommand{\cA}{\mathcal A}
\newcommand{\cC}{\mathcal C}
\newcommand{\cI}{\mathcal I}
\newcommand{\cJ}{\mathcal J}
\newcommand{\fl}{\mathfrak l}
\newcommand{\fp}{\mathfrak p}
\newcommand{\bF}{\mathbf F}
\newcommand{\na}{\nabla}
\newcommand{\pa}{\partial}
\renewcommand{\a}{\alpha} 
\renewcommand{\b}{\beta} 
\newcommand{\G}{\varGamma}
\newcommand{\De}{\mathit{\Delta}}
\newcommand{\e}{\varepsilon}
\newcommand{\f}{\varphi}
\renewcommand{\l}{\lambda}
\newcommand{\w}{\omega}
\newcommand{\W}{\mathit{\Omega}}
\newcommand{\la}{\langle}
\newcommand{\ra}{\rangle}
\newcommand{\dd}{\mathrm{d}}
\newcommand{\dt}{\mathrm{d}_t}
\newcommand{\tx}{\widetilde{x}}
\newcommand{\tr}{\;^t}
\newcommand{\bu}{\bullet}
\newcommand{\diag}{\mathrm{diag}}
\newcommand{\TP}[1]{
  \sideset{^t}{}{\mathop{#1}}
}
\newcommand{\under}[3]{
  {}_{#1} #2 _{#3}
}
\newcommand{\si}{\sigma}
\newcommand{\we}{\wedge}
\newcommand{\tpi}{2\pi \sqrt{-1}}
\newcommand{\bS}{\mathbf{S}}
\newcommand{\ot}{\otimes}
\newcommand{\conti}[1]{U_{#1}}
\newcommand{\Conti}[1]{\mathcal{U}_{#1}}
\newcommand{\cohom}{V}
\newcommand{\cohomi}{{V^{(l)}}}
\newcommand{\Omd}{\W^{\bu}}
\newcommand{\van}{{\rm van}}
\newcommand{\bC}{b\mathcal{C}}
\newcommand{\nbc}{\mathbf{nbc}}
\newcommand{\fZ}{\mathfrak{Z}}
\newtheorem{theorem}{Theorem}[section]
\newtheorem{proposition}[theorem]{Proposition}
\newtheorem{lemma}[theorem]{Lemma}
\newtheorem{cor}[theorem]{Corollary}
\newtheorem{fact}[theorem]{Fact}
\theoremstyle{definition}
\newtheorem{exam}[theorem]{Example}
\newtheorem{definition}[theorem]{Definition}
\newtheorem{notation}[theorem]{Notation}
\title
[Intersection numbers]
{Intersection numbers of twisted cycles and cocycles for degenerate arrangements}
\author[Y. Goto]{Yoshiaki Goto}
\address[Goto]{
  General Education,
  Otaru University of Commerce,
  Otaru, Hokkaido 047-8501, Japan 
}
\email{goto@res.otaru-uc.ac.jp}
\keywords{Hypergeometric function, Twisted (co)homology group, 
Intersection pairing, Contiguity relations. }
\subjclass[2010]{33C70; 32S22. }
\date{\today}
\begin{document}
\maketitle
\begin{abstract}
  We study the intersection numbers defined on 
  twisted homology or cohomology groups
  that are associated with hypergeometric integrals corresponding to 
  degenerate hyperplane arrangements 
  in the projective $k$-space. 
  We present formulas to evaluate the intersection numbers 
  in the case when exactly one $(k+1)$-tuple of the hyperplanes 
  intersects at a point. 
  As an application, we discuss the contiguity relations 
  of hypergeometric functions 
  in terms of the intersection numbers on twisted cohomology groups. 
\end{abstract}

\section{Introduction}
As a generalization of Gauss' hypergeometric function, 
we consider integrals of the form
\begin{align*}
  \int_{\square} \prod_{j=1}^{k+n+1} 
  (z_{0j}+z_{1j}t_1+\cdots+z_{kj}t_k)^{a_j} \ \dd t_1 \we \cdots \we \dd t_k ,
\end{align*}
where the $a_i$ are parameters, $z_{ij}$ are variables, and 
$\square$ denotes a certain region.
These \textit{hypergeometric integrals} 
can be studied in the framework of twisted homology and cohomology groups 
(see, e.g., \cite{AK}). 
The intersection forms on these homology or cohomology groups are
not only of theoretical interest, but can also be useful in deriving explicit formulas between 
hypergeometric integrals. 
In this paper, we focus on evaluating the intersection numbers for 
twisted (co)cycles, which are frequently used. 

The linear form $z_{0j}+z_{1j}t_1+\cdots+z_{kj}t_k$ in $t_1 ,\dots ,t_k$ 
(here, we regard the $z_{ij}$ as constants) 
defines an arrangement $\cA$ of hyperplanes in $\C^k$. 
Let $\cA^h$ be the arrangement of hyperplanes in $\P^k$ that consists of 
the homogenization of $\cA$ and the hyperplane at infinity. 
$\cA^h$ corresponds to a $(k+1)\times (k+n+2)$ matrix $z$ (see Section \ref{sec:preliminaries}).  
We say that $z$ is the ``coefficient matrix'' 
of $\cA^h$ (or $\cA$). 
In the case when $\cA^h$ is in a general position 
(i.e., any $(k+1)$-submatrix of $z$ is invertible), 
explicit formulas for the intersection numbers of
twisted cycles (resp. cocycles) defined by real chambers (resp. logarithmic $k$-forms) 
were derived in \cite{KY} (resp. \cite{CM}, \cite{M1}). 
Let $Z^{(0)}$ be the set of coefficient matrices 
that give arrangements of hyperplanes in a general position. 

The 
hypergeometric integrals associated with degenerate arrangements 
are also interesting; for example, 
the generalized hypergeometric function ${}_p F_{p-1}$ $(p\geq 3)$ and 
Lauricella's hypergeometric function $F_A$ are degenerate cases. 
In such examples, we evaluate the intersection numbers by blowing-up $\P^k$. 
Though the resulting evaluations have been derived 
by several authors, 
they are slightly complicated. 
For example, when we evaluate the intersection numbers for chambers, 
the orientations of the chambers in the blown-up space become complicated 
(see \cite{KY}). 
To investigate the orientations correctly, 
it seems that some geometric analysis is necessary. 

In this paper, 
we present formulas for evaluating the intersection numbers 
in the case when exactly one $(k+1)$-tuple of the hyperplanes 
intersects at a point. 
In other words, exactly one of the $(k+1)$-submatrices of 
the coefficient matrix $z_0$ 
is not invertible. 
We denote the set of such matrices by $Z^{(1)}$. 
Our formulas do not involve the blowing-up process, and can evaluate 
intersection numbers using those for $z\in Z^{(0)}$. 

The general idea of our method is as follows
(though we explain in terms of cohomology, 
a similar discussion is valid for the homology case).
Let $H^k(\W^\bu(T_{z}),\na^{\pm \a})$ and 
$H^k(\W^\bu(T_{z_0}),\na_0^{\pm \a})$ be
twisted cohomology groups corresponding to 
$z\in Z^{(0)}$
and $z_0\in Z^{(1)}$, respectively. 
We denote the intersection form on $H^k(\W^\bu(T_{z}),\na^{\pm \a})$ 
(resp. $H^k(\W^\bu(T_{z_0}),\na_0^{\pm \a})$) by
$\cI^c$ (resp. $\cI_0^c$). 
We are interested in $\cI_0^c (\f ,\psi)$, where
$\f \in H^k(\W^\bu(T_{z_0}),\na_0^{\a})$ and 
$\psi \in H^k(\W^\bu(T_{z_0}),\na_0^{-\a})$ are 
expressed as ``limits'' of the logarithmic forms 
$\f' \in H^k(\W^\bu(T_{z}),\na^{\a})$, 
$\psi' \in H^k(\W^\bu(T_{z}),\na^{-\a})$ as $z\to z_0$, respectively.
\begin{itemize}
\item There is one ``vanishing cocycle'' $\f^{\van}\in H^k(\W^\bu(T_{z}),\na^{\pm \a})$ 
  that becomes zero as $z\to z_0$. 
\item There are isomorphisms between $H^k(\W^\bu(T_{z_0}),\na_0^{\pm \a})$ 
  and the orthogonal complements of $\f^{\van}$ 
  in $H^k(\W^\bu(T_{z}),\na^{\pm \a})$ with respect to $\cI^c$, 
  and these preserve the intersection form. 
\item $\cI_0^c (\f ,\psi)$ is evaluated as the intersection number $\cI^c$ of  
  the projections of $\f', \psi'$ to these orthogonal complements. 
\end{itemize}
As the projections are also expressed by $\cI^c$, 
we can evaluate $\cI_0^c (\f ,\psi)$ using only $\cI^c$. 
The resulting formula does not require the blow-up process, 
which is one advantage of our method.  

We can consider more degenerated cases in which, for example, 
two submatrices of the coefficient matrix become zero. 
In such cases, we also consider 
the orthogonal complement of the space spanned by 
the vanishing (co)cycles, and the projection to them. 
In some examples, we can check that our discussions are valid,  
and hence the intersection numbers can be calculated easily. 
However, we do not have a proof for general situations 
(thus, our method does not yet give clear formulas for 
${}_p F_{p-1}$ $(p\geq 3)$ or $F_A$). 
The difficulty comes from the problem 
of moving a generic $z$ to a given degenerate $z_0$, 
which is also of interesting in terms of moduli theory. 

The remainder of this paper is arranged as follows. 
In Section \ref{sec:preliminaries}, we briefly review 
twisted homology and cohomology groups and intersection forms. 
Section \ref{section:vanishing} is the main part of this paper, 
in which we derive formulas for the intersection numbers. 
In Section \ref{section:examples}, we present several examples. 
In the first example, we evaluate the intersection numbers 
using our formulas 
for a degenerated case of $(3,6)$-type. 
In the second example, we show that using 
the intersection matrix, which is also evaluated by our formulas, allows certain sets of logarithmic forms 
to give bases of twisted cohomology groups.  
These bases are then used 
to investigate the contiguity relations
in Section \ref{section:contiguity}. 
As discussed in \cite{TGKT}, explicit expressions for the contiguity relations are 
useful in algebraic statistics. 
In \cite{GM-PC}, we presented such expressions for the case of a general position 
in terms of the intersection numbers of twisted cohomology groups. 
We extend this formulation to the ``degenerated version'' in Section \ref{section:contiguity}. 
The discussion is parallel to that in \cite{GM-PC}. 
Our formulas for intersection numbers 
can be used to obtain explicit expressions for the contiguity relations.

\section{Preliminaries}
\label{sec:preliminaries}
In this section, we review twisted homology and cohomology groups 
associated with hypergeometric integrals. 
For definitions and basic facts, refer to \cite[Chapter 2]{AK}. 
For twisted cohomology groups, we use the same notation as in \cite{GM-PC}.

\subsection{Settings}
Let $Z=Z_{k+1,k+n+2}$ be the set of $(k+1)\times (k+n+2)$ matrices of the form
\begin{align*}
  z=(z_{ij})_{
  \begin{subarray}{l}
    0\leq i \leq k \\ 0 \leq j \leq k+n+1
  \end{subarray}}
  =\bordermatrix{
     &0&1&2&\cdots& k+n+1\cr
    0&1&z_{01}&z_{02}&\cdots&z_{0,k+n+1} \cr
    1&0&z_{11}&z_{12}&\cdots&z_{1,k+n+1} \cr
    \vdots&\vdots &\vdots &\vdots &\ddots&\vdots \cr
    k&0&z_{k1}&z_{k2}&\cdots&z_{k,k+n+1} \cr
  } ,\quad z_{ij} \in \C .
\end{align*}
We regard the $0$-th column as $\tr (z_{00},z_{10},\dots ,z_{k0})=\tr (1,0,\dots,0)$. 
Let $\cJ$ be the set consisting of the subsets of 
$\{0,1,2,\dots,k+n,k+n+1\}$ with cardinality $k+1$. 
When we write $J=\{j_0 ,j_1 ,\dots ,j_k\} \in \cJ$, we simply consider $J$ as a set; 
however, when we write $J=\la j_0 j_1 \dots j_k\ra$, 
we regard $J$ as a set with an order. 
For example, 
\begin{align*}
  \{ 0,1,2 \} =\{ 0,2,1 \} ,\ \textrm{but} \ 
  \la 012 \ra \neq \la 021 \ra .
\end{align*}
For $z\in Z$ and $J\in \cJ$, we set 
\begin{align*}
  z \la J\ra = z\la j_0 j_1 \dots j_k\ra =
  \begin{pmatrix}
    z_{0,j_0} &z_{0,j_1} &\cdots &z_{0,j_k} \\
    z_{1,j_0} &z_{1,j_1} &\cdots &z_{1,j_k} \\
    \vdots & \vdots &\ddots & \vdots \\
    z_{k,j_0} &z_{k,j_1} &\cdots &z_{k,j_k}
  \end{pmatrix},
\end{align*}
which is the sub-matrix of $z$ 
consisting of the $j_0$-th, $j_1$-th, $\dots$, $j_k$-th columns.
We consider two subsets of $Z$ defined by 
\begin{align*}
  Z^{(0)} &= \{ z\in Z \mid |z \la J\ra |\neq 0 \ (\forall J \in \cJ) \} , \\
  Z^{(1)} &= \left\{ z\in Z \left| 
      \begin{array}{l}
        \textrm{there exists }J\in \cJ 
        \textrm{ such that } \\
        |z \la J\ra |=0 \textrm{ and } |z \la J' \ra |\neq 0 \ (J' \neq J) 
      \end{array}
    \right. \right\} .
\end{align*}
In this paper, we consider matrices $z$ belonging to 
$Z^{(01)} =Z^{(0)} \cup Z^{(1)}$. 

Let $L_j=L_j (t)=L_j (t;z)$ $(0\le j\le k+n+1)$ 
be linear forms of $t_0,t_1,\dots,t_k$ defined by
\begin{align*}
  (t_0,t_1,\dots,t_k)z =(L_0,L_1,\dots,L_{k+n+1}).
\end{align*}
For example, 
\begin{align*}
  L_0 (t;z)=t_0 ,\quad 
  L_1 (t;z)=z_{01}t_0+z_{11}t_1+\cdots+z_{k1}t_k.
\end{align*}
We regard $(t_0,t_1,\dots,t_k)$ as the projective coordinates of $\P^k$ and 
$(t_1,\dots,t_k)$ as the affine coordinates by setting $t_0=1$.
We denote $\{ t\in \P^k \mid L_j (t;z)=0 \}$ by $(L_j=0)$. 
Thus, the collection
\begin{align*}
  \cA_z^h=\{ (L_j =0) \mid j=0,1,\dots , k+n+1 \} 
\end{align*}
defines an arrangement of hyperplanes in $\P^k$. 
We regard $(L_0 =0)$ as the hyperplane at infinity in $\P^k$. 
If $z\in Z^{(1)}$ with $|z \la J\ra |=0$, then 
$k+1$ hyperplanes $(L_j =0)$ ($j\in J$) intersect at a point. 
If $z\in Z^{(0)}$, then the hyperplanes in $\cA_z^h$ are in a general position
in $\P^k$. 
Note that we can regard 
\begin{align*}
  \cA_z=\{ (L_j =0) \mid j=1,\dots , k+n+1 \} 
\end{align*}
as an arrangement of hyperplanes in $\C^k =\P^k -(L_0=0)$. 

We set
\begin{align*}
  \fZ = \left\{ (t,z)\in \P^k\times Z^{(01)}\mid 
  \prod_{j=0}^{k+n+1} L_{j}(t;z) \ne 0 \right\} ,
\end{align*}
and denote $T_z$ as the preimage of $z$ under the projection 
$\fZ \ni (t,z) \mapsto z \in Z^{(01)}$. 
We identify $T_z$ with an open subset of $\P^k$ or $\C^k$, that is, 
\begin{align*}
  T_z = M(\cA_z) =\C^k -\bigcup_{k=1}^{k+n+1} (L_k=0) \subset \C^k \subset \P^k .
\end{align*}

To consider hypergeometric integrals, 
we use the complex parameters
$\a_0,\a_1,\dots,\a_{k+n},\a_{k+n+1}$
that satisfy 
\begin{align}
  \label{eq:non-int}
  \a_0,\a_1,\dots,\a_{k+n},\a_{k+n+1} \not\in \Z ,\quad 
  \sum_{j=0}^{k+n+1} \a_j=0. 
\end{align}
We set $\a=(\a_0,\a_1\dots,\a_{k+n},\a_{k+n+1})$. 
When we consider $z\in Z^{(1)}$ with $|z \la J\ra |=0$, 
we also assume the condition 
\begin{align}
  \label{eq:non-int-J}
  \sum_{j\in J} \a_j \not\in \Z . 
\end{align}

We often regard the $\a_i$ as being indeterminate. 
For an element $f(\a)$ of the rational function field 
$\C (\a)=\C (\a_0 ,\ldots ,\a_{k+n+1})$, 
we set $f(\a)^{\vee}=f(-\a)$. 
For a matrix $A$ with entries in $\C(\a)$, 
we write $A^{\vee}$ to denote the matrix 
given by applying ${}^{\vee}$ to each entry of $A$.

\subsection{Twisted homology groups}
\label{subsection:homology-setting}
We fix $z \in Z^{(01)}$ 
and consider the twisted homology groups associated with 
the multivalued functions
\begin{align*}
  u_z(t)=\prod_{j=1}^{k+n+1}L_j (t;z)^{\a_j} ,\quad 
  u_z(t)^{-1}=\frac{1}{u_z(t)}=\prod_{j=1}^{k+n+1}L_j(t;z)^{-\a_j} 
\end{align*}
on $T_z$. 
We denote the $k$-th twisted homology group by $H_k (T_z, u_z)$, 
and the locally finite one by $H^{lf}_k (T_x ,u_z)$. 
It is known that $H^{lf}_k (T_z ,u_z^{\pm 1})$ is isomorphic to 
$H_k (T_z ,u_z^{\pm 1})$; 
we identify these groups and use the notation $H_k (T_z ,u_z^{\pm 1})$. 
We refer to a $k$-dimensional twisted cycle as simply a twisted cycle. 
We also note that the intersection form $\cI^h$ is defined between 
$H_k (T_z ,u_z)$ and $H_k (T_z ,u_z^{-1})$. 


When we use twisted homology groups and intersection numbers, 
we mainly consider the case when each entry of $z$ is a real number, that is,  
$\cA_z$ is the complexification of a real arrangement. 
We set
\begin{align*}
  Z^{(01)}_{\R} = \{ z\in Z^{(01)} \mid \textrm{each entry of $z$ is a real number} \} .
\end{align*}
Let $\cC (\cA_z)$ be the set of chambers of $\cA_z$ and 
$\bC (\cA_z)$ be the set of compact chambers. 
For a chamber $\De \in \cC (\cA_z)$, we can regard 
$\De \ot u_z^{\pm 1}$ as twisted cycles in $H^{lf}_k (T_z ,u_z^{\pm 1}) =H_k (T_z ,u_z^{\pm 1})$ 
by taking suitable branches of $u_z^{\pm 1}$. 
We say that $\De \ot u_z^{\pm 1}$ are \textit{loaded chambers}. 
Formulas to evaluate the intersection numbers of loaded chambers are given in \cite{KY}.
The formulas are simple for $z\in Z^{(0)}$, but  
they are more complicated for $z\in Z^{(1)}$ because of the blowing-up process. 
We present a new method to evaluate them in Section \ref{section:vanishing}. 
\begin{fact}[{\cite{AK}, \cite[Proposition 3.1.4]{DT}}]
  \label{fact-homology-chamber}
  For any $z\in Z^{(01)}_{\R}$, we have 
  \begin{align*}
    \dim H_k (T_z ,u_z^{\pm 1}) =
    \begin{cases}
      \binom{k+n}{k} & (z\in Z^{(0)}) , \\
      \binom{k+n}{k}-1 & (z\in Z^{(1)}) .
    \end{cases}
  \end{align*}
  In particular, if $z\in Z^{(01)}_{\R}$, 
  the loaded bounded chambers 
  $\{ \De \ot u_z^{\pm 1} \mid \De \in \bC (\cA_z) \}$ form 
  bases of the twisted homology groups. 
\end{fact}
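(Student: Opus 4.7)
The plan is to reduce the dimension computation to an Euler-characteristic calculation via a vanishing theorem, then deduce the basis statement by counting and a non-degenerate pairing with cohomology. First I would invoke non-resonance vanishing: the non-integrality assumptions (\ref{eq:non-int}) and, for $z\in Z^{(1)}$, (\ref{eq:non-int-J}), ensure that the local system determined by $u_z$ is non-resonant along every dense edge of $\cA_z^h$. The standard twisted vanishing theorem (Aomoto--Kohno--Esnault--Schechtman--Viehweg) then gives $H_j(T_z,u_z^{\pm 1})=0$ for $j\neq k$, so that
\begin{align*}
  \dim H_k(T_z,u_z^{\pm 1}) = (-1)^k \chi(T_z) .
\end{align*}

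Next I would compute $(-1)^k\chi(T_z)$ combinatorially. For real complexified arrangements this equals the number of bounded chambers $|\bC(\cA_z)|$ by Zaslavsky's theorem. When $z\in Z^{(0)}$, the classical count for $k+n+1$ hyperplanes in general position in $\R^k$ gives $|\bC(\cA_z)|=\binom{k+n}{k}$. When $z\in Z^{(1)}$, the concurrency of the $(k+1)$-tuple indexed by $J$ (or the parallelism of $k$ affine hyperplanes if $0\in J$) can be analyzed by deforming from a generic configuration: the unique bounded chamber that collapses to the concurrency point (or escapes to infinity along the parallel direction) accounts for the drop, giving $|\bC(\cA_z)|=\binom{k+n}{k}-1$. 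For the basis claim, since $|\bC(\cA_z)|$ matches the computed dimension, it suffices to check linear independence of the loaded bounded chambers $\De\ot u_z^{\pm 1}$; I would pair them with an $\nbc$-basis of $H^k(\W^\bu(T_z),\na^{\mp\a})$ via the period integrals $\int_\De u_z^{\pm 1}\f$, and verify that after a suitable broken-circuit ordering the resulting period matrix is upper-triangular with diagonal entries given by explicit products of gamma factors---these are nonzero precisely by the non-integrality hypotheses.

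The main obstacle is making the chamber-count drop from $Z^{(0)}$ to $Z^{(1)}$ rigorous, cleanly in both cases $0\in J$ and $0\notin J$. The conceptually transparent approach is to track the degeneration $z\to z_0$ explicitly and identify the unique collapsing bounded chamber as the geometric counterpart of the vanishing cycle that drives Section \ref{section:vanishing}; in the ``parallel'' case ($0\in J$) one must take care that the chamber escapes through the hyperplane at infinity rather than across some other $(L_j=0)$, which can be enforced by choosing the deformation path to preserve all non-$J$ intersection points.
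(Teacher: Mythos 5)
The paper does not actually prove this Fact---it is quoted from \cite{AK} and \cite[Proposition 3.1.4]{DT}---and your outline is precisely the standard argument behind those citations: non-resonance of the local system (guaranteed by (\ref{eq:non-int}) and (\ref{eq:non-int-J})) kills $H_j$ for $j\neq k$ so that $\dim H_k=(-1)^k\chi(T_z)$, Zaslavsky's theorem converts this to the count of bounded chambers ($\binom{k+n}{k}$ in general position, dropping by one when a single $(k+1)$-tuple degenerates), and linear independence of the loaded bounded chambers follows from the non-vanishing of the Douai--Terao period determinant against an $\nbc$ cohomology basis. Your proposal is therefore correct and follows essentially the same route as the sources the paper relies on; the only point to keep clean is the one you already flag, namely verifying the drop by exactly one bounded chamber in both the concurrent ($0\notin J$) and parallel ($0\in J$) cases, which can be done lattice-theoretically via the characteristic polynomial rather than by a deformation argument.
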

This fact implies that 
the number of bounded chambers is $\binom{k+n}{k}$ (resp. $\binom{k+n}{k}-1$)
if $z\in Z^{(0)}$ (resp. $z\in Z^{(1)}$). 

For $\a=(\a_0,\a_1 ,\dots,\a_{k+n},\a_{k+n+1})$, 
we define $\l=(\l_0,\l_1 , \dots,\l_{k+n},\l_{k+n+1})$ by $\l_j =e^{\tpi \a_j}$. 
We also regard the $\l_i$ as being indeterminate. 
For an element $f(\l)$ of the rational function field 
$\C (\l)=\C (\l_0 ,\ldots ,\l_{k+n+1})$, 
we have $f(\l)^{\vee}=f(1/\l_0 ,\ldots ,1/\l_{k+n+1})$. 
According to \cite{KY}, the intersection numbers between loaded chambers 
are valued in $\C(\l)$.

\subsection{Twisted cohomology groups}
\label{section:cohomology}
For a fixed $z \in Z^{(01)}$, 
let $\W^l (T_z)$ be the vector space of rational $l$-forms on $\P^k$ 
with poles only along $\P^k -T_z$.
We set an $1$-form as
\begin{align*}
  \w = \sum_{j=1}^{k+n+1} \a_j \dt \log L_j 
  = \sum_{j=1}^{k+n+1} \a_j \frac{\dt L_j}{L_j} \in \W^1 (T_z),
\end{align*}
where $\dt$ is the exterior derivative with respect to $t_1,\dots,t_k$. 
We consider the twisted cohomology groups 
\begin{align*}
  H^k(\W^\bu(T_z),\na^{\pm \a})=\W^k(T_z)/\na^{\pm \a}(\W^{k-1}(T_z)) ,
\end{align*}
where $\na^\a=\dt+\w\wedge$. 
Note that there is an intersection pairing $\cI^c$ between 
$H^k(\W^\bu(T_x),\na^{\a})$ and $H^k(\W^\bu(T_x),\na^{-\a})$.

From the duality property (e.g., \cite[Lemma 2.9]{AK}) and 
Fact \ref{fact-homology-chamber}, we obtain the dimensions of 
the twisted cohomology groups. 
\begin{fact}
  \begin{align*}
    \dim H^k(\W^\bu(T_z),\na^{\pm\a}) =\left\{
      \begin{array}{ll}
        \binom{k+n}{k} & (z\in Z^{(0)}) , \\
        \binom{k+n}{k}-1 & (z\in Z^{(1)}) .
      \end{array}
    \right.
  \end{align*}
\end{fact}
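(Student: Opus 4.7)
The plan is to reduce the cohomological dimension statement to the homological one (Fact \ref{fact-homology-chamber}) via the perfect pairing between twisted homology and twisted cohomology. The key input is the duality isomorphism of the form
\begin{equation*}
  H^k(\W^\bu(T_z),\na^{\pm\a}) \cong \mathrm{Hom}_\C\bigl(H_k(T_z,u_z^{\mp 1}),\C\bigr),
\end{equation*}
which is exactly the content of \cite[Lemma 2.9]{AK}. Once this is in place, taking $\C$-dimensions gives $\dim H^k(\W^\bu(T_z),\na^{\pm\a}) = \dim H_k(T_z,u_z^{\mp 1})$, and the right-hand side is supplied by Fact \ref{fact-homology-chamber}.

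The steps I would carry out, in order, are as follows. First, verify that the non-integrality hypotheses \eqref{eq:non-int} (and \eqref{eq:non-int-J} in the $Z^{(1)}$ case) guarantee the hypotheses of the Aomoto--Kita duality lemma: concretely, under these conditions the twisted (co)homology is concentrated in the middle degree $k$, so that the pairing between $H^k(\W^\bu(T_z),\na^{\a})$ and $H_k(T_z,u_z)$ (and, symmetrically, its $\pm$-swapped partner) is perfect. Second, invoke \cite[Lemma 2.9]{AK} to conclude the duality isomorphism displayed above. Third, read off the dimension equality and plug in the two cases of Fact \ref{fact-homology-chamber}, which immediately yield $\binom{k+n}{k}$ for $z\in Z^{(0)}$ and $\binom{k+n}{k}-1$ for $z\in Z^{(1)}$.

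The main obstacle—really the only nontrivial check—is making sure that the duality genuinely applies in the degenerate case $z\in Z^{(1)}$. In the generic case $Z^{(0)}$ this is completely standard (the complement $T_z$ is an affine hyperplane-arrangement complement in general position and all the vanishing theorems for twisted cohomology of generic local systems are available). For $z\in Z^{(1)}$, the additional condition \eqref{eq:non-int-J} that $\sum_{j\in J}\a_j\notin\Z$ at the unique multiple point is precisely what is needed to kill the local monodromy contribution at the blown-up point, ensuring the local system remains sufficiently generic so that the analogue of the vanishing theorem and the perfectness of the period pairing go through. Once this non-integrality is observed, the rest of the argument is formal and the dimension equality follows from Fact \ref{fact-homology-chamber}.
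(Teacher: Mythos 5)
Your proposal matches the paper's own argument exactly: the paper derives this Fact in one line ``from the duality property (e.g., \cite[Lemma 2.9]{AK}) and Fact \ref{fact-homology-chamber}'', which is precisely your reduction via the perfect pairing to the homological dimension count. Your additional care in checking that the non-integrality conditions \eqref{eq:non-int} and \eqref{eq:non-int-J} make the duality applicable in the degenerate case is a reasonable elaboration of what the paper leaves implicit.
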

The bases of $H^k(\W^\bu(T_z),\na^{\pm \a})$ are given by 
the logarithmic $k$-forms. 
For $J=\{ j_0,j_1,\dots,j_l \} \subset \{ 0,1,\ldots ,k+n+1 \}$,  
we set the logarithmic $l$-form as
\begin{align*}
  \f\la J\ra =\dt\log(L_{j_1}/L_{j_0})\wedge \dt\log(L_{j_2}/L_{j_0})
  \wedge \cdots \wedge\dt\log(L_{j_l}/L_{j_0}) .
\end{align*}
We mainly use the case where $\#J =k+1$ (i.e., $J\in \cJ$). 
In such a case, the $k$-form $\f\la J\ra$ is expressed as 
\begin{align}
  \label{eq:phi}
  \f\la J\ra = \frac{|z \la J\ra |}{\prod_{p=0}^k L_{j_p}} \dd t_1\wedge \cdots \wedge\dd t_k 
\end{align}
(see, e.g., \cite[Fact 2.5]{GM-PC}).
There are explicit formulas for the intersection numbers of 
logarithmic $k$-forms. 
\begin{fact}[\cite{M1}]
  \label{fact:intersection}
  We assume $z\in Z^{(0)}$. 
  For $J=\la j_0 \dots j_k \ra$ and $J'=\la j'_0 \dots j'_k \ra$, we have  
  \begin{align*}
    \cI^c(\f\la J\ra,\f\la J'\ra)
    =
    \begin{cases}
      (2\pi\sqrt{-1})^k \cdot 
      \dfrac{\sum_{j\in J}\a_{j}}
      {\prod_{j\in J}\a_{j}}
      &\textrm{if } J=J',\\
      (2\pi\sqrt{-1})^k \cdot 
      \dfrac{(-1)^{p+q}}
      {\prod_{j\in J\cap J'}\a_{j}}
      &\textrm{if } \#(J\cap J')=k,\\
      0&\textrm{otherwise,}
    \end{cases}
  \end{align*}
  where we take $p$ and $q$ such that 
  $J-\{j_p\}=J'-\{j'_q\}$ 
  in the case of $\#(J\cap J')=k$.
\end{fact}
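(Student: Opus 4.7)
The plan is to compute $\cI^c(\f\la J\ra, \f\la J'\ra)$ via the residue-localization technique for logarithmic forms, following Matsumoto \cite{M1}. The intersection pairing factors through the canonical isomorphism $\iota \colon H^k(\W^\bu(T_z), \na^\a) \to H^k_c(\W^\bu(T_z), \na^\a)$ obtained by subtracting $\na^\a$-exact corrections supported on small tubular neighbourhoods of the polar locus. For a logarithmic form $\f\la J\ra$, these corrections cut off the pole along each $(L_j = 0)$ with $j \in J$; since $z \in Z^{(0)}$, near the vertex $p_J = \bigcap_{j \in J}(L_j = 0)$ the ratios $L_{j_1}/L_{j_0}, \dots, L_{j_k}/L_{j_0}$ form a local coordinate chart, so the construction of $\iota(\f\la J\ra)$ becomes completely explicit and concentrates the support of the correction in a small neighbourhood of the finite vertex set of $\cA_z^h$.

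Second, the integral representation
\[
  \cI^c(\f\la J\ra, \f\la J'\ra) = \int_{T_z} \f\la J\ra \we \iota(\f\la J'\ra)
\]
localizes to a sum of contributions, one at each vertex lying in both polar loci. A pole count shows the integrand can carry a $k$-fold simple-pole singularity only at a vertex contained in $\bigcap_{j \in J \cup J'}(L_j = 0)$; since $z \in Z^{(0)}$ this intersection is a single point when $\#(J \cup J') = k+1$ and empty otherwise, so a nonzero contribution can occur exactly when $J = J'$ or $\#(J \cap J') = k$. Each surviving vertex produces a $k$-fold iterated residue, which supplies the normalization $(\tpi)^k$.

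Third, in the case $\#(J \cap J') = k$ with $J \setminus \{j_p\} = J' \setminus \{j'_q\} = J \cap J'$, both forms share the unique vertex $\bigcap_{j \in J \cap J'}(L_j = 0)$; the iterated residue evaluates directly to $(\tpi)^k / \prod_{j \in J \cap J'} \a_j$, and the sign $(-1)^{p+q}$ arises from reordering the wedge factors $\dt\log(L_{j_i}/L_{j_0})$ in the conventions $\la j_0 \cdots j_k\ra$ and $\la j_0' \cdots j_k'\ra$ so that the shared indices align. For $J = J'$, the same vertex contributes, but the numerator $\sum_{j \in J} \a_j$ appears because of the $\na^\a$-corrections built into $\iota(\f\la J\ra)$: when the pole along each $L_{j_p}$ is cut off, the term $\w \we \eta_{j_p}$ in $\na^\a \eta_{j_p}$ contributes a residue weighted by $\a_{j_p}$, and summing over $j_p \in J$ yields $\sum_{j\in J}\a_j$. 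The main obstacle I anticipate is precisely this diagonal computation: confirming that the cumulative $\na^\a$-corrections assemble into the stated numerator (rather than a partial sum, or a sum that also picks up parameters $\a_j$ with $j \notin J$) requires careful tracking of the order in which the hyperplanes are cut off and verifying that the cross-terms cancel; once this calibration is done, the off-diagonal case and the vanishing case reduce to a routine iterated-residue computation and a combinatorial sign identification.
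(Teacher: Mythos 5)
This statement is quoted in the paper as a Fact with a citation to \cite{M1}; the paper itself contains no proof of it, so your sketch can only be measured against the standard argument of \cite{CM} and \cite{M1}. Your overall strategy --- regularize $\f\la J\ra$ into compactly supported cohomology by cutting off the poles, write $\cI^c$ as an integral over $T_z$, and localize to iterated residues at points of the arrangement --- is exactly that argument. But the execution contains a concrete error in identifying \emph{where} the pairing localizes, and this error would derail the computation. For $z\in Z^{(0)}$ the defining property is $|z\la J\ra|\neq 0$ for every $(k+1)$-set $J$, which means the $k+1$ hyperplanes $(L_j=0)$, $j\in J$, have \emph{empty} common intersection in $\P^k$. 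So the ``vertex $p_J=\bigcap_{j\in J}(L_j=0)$'' around which you build the local chart does not exist, and your pole-count criterion --- that a nonzero contribution requires a vertex lying in $\bigcap_{j\in J\cup J'}(L_j=0)$ --- is never satisfied (that set is empty whenever $\#(J\cup J')\geq k+1$, hence always); taken literally it would force every entry of the intersection matrix to vanish, contradicting the formula you are trying to prove. The correct localization is at the points where exactly $k$ hyperplanes meet: a vertex $v=\bigcap_{j\in S}(L_j=0)$ with $\#S=k$ contributes iff $S\subset J$ and $S\subset J'$, which happens for $k+1$ choices of $S$ when $J=J'$, for the single choice $S=J\cap J'$ when $\#(J\cap J')=k$, and never otherwise.

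This also changes the mechanism behind the diagonal numerator. You attribute $\sum_{j\in J}\a_j$ to cross-terms $\w\we\eta_{j_p}$ in the $\na^{\a}$-corrections at a single vertex; in fact it arises because the diagonal entry is a \emph{sum over the $k+1$ genuine vertices} $v_p=\bigcap_{j\in J-\{j_p\}}(L_j=0)$, each contributing
\begin{align*}
  \frac{(\tpi)^k}{\prod_{j\in J-\{j_p\}}\a_j}
  =(\tpi)^k\cdot\frac{\a_{j_p}}{\prod_{j\in J}\a_j},
\end{align*}
and summing over $p=0,\dots,k$ assembles the numerator $\sum_{j\in J}\a_j$. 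Your treatment of the off-diagonal case is essentially right (one shared vertex, residue $(\tpi)^k/\prod_{j\in J\cap J'}\a_j$, sign $(-1)^{p+q}$ from aligning the orderings), precisely because there the unique contributing vertex really is $\bigcap_{j\in J\cap J'}(L_j=0)$ with $\#(J\cap J')=k$. To repair the sketch you should replace the single-vertex picture by the sum over $k$-fold intersection points and recompute the local residue coefficients of $\f\la J\ra$ at each of its $k+1$ vertices; the rest of your outline then goes through.
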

In particular, these intersection numbers belong to $\C(\a)$ and 
satisfy $\cI^c(\f\la J\ra,\f\la J'\ra)=\cI^c(\f\la J'\ra,\f\la J\ra)$. 
Note that if we regard the $\a_i$ as being indeterminate, we have 
$\cI^c(\f\la J\ra,\f\la J'\ra)^{\vee}=(-1)^k \cdot \cI^c(\f\la J\ra,\f\la J'\ra)$. 

By the non-degeneracy of $\cI^c$, we obtain bases of 
$H^k(\W^\bu(T_z),\na^{\pm\a})$ for $z\in Z^{(0)}$.
\begin{fact}[\cite{GM-PC}]
  \label{fact:basis}
  We assume $z\in Z^{(0)}$. 
  Let $p$ and $q$ be two different elements in the set 
  $\{0,1,\dots,k+n+1\}$. We set 
  \begin{align*}
    \under{q}{\cJ}{p}=\{J\in \cJ\mid q\notin J,\ p\in J\}.
  \end{align*}
  Then, $\{\f\la J\ra \mid J\in \under{q}{\cJ}{p} \}$ 
  gives bases of 
  $H^k(\W^\bu(T_z),\na^{\pm\a})$.
\end{fact}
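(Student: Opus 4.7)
The plan is first to verify the cardinality count, then to reduce linear independence to non-singularity of an intersection matrix via Fact \ref{fact:intersection}, and finally to establish that non-singularity. The main obstacle will be the last step.

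First I would count: an element $J \in \under{q}{\cJ}{p}$ is determined by choosing $k$ further indices from $\{0,\ldots,k+n+1\}\setminus\{p,q\}$ (a set of size $k+n$), giving $|\under{q}{\cJ}{p}| = \binom{k+n}{k}$, which agrees with $\dim H^k(\W^\bu(T_z),\na^{\pm\a})$ for $z\in Z^{(0)}$ by the preceding Fact. Second, by the non-degeneracy of $\cI^c$, both candidate sets $\{\f\la J\ra \mid J\in\under{q}{\cJ}{p}\} \subset H^k(\W^\bu(T_z),\na^{\pm\a})$ are simultaneously bases provided that the square matrix
\begin{align*}
N = \bigl(\cI^c(\f\la J\ra,\f\la J'\ra)\bigr)_{J,J' \in \under{q}{\cJ}{p}}
\end{align*}
is non-singular over $\C(\a)$.

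The entries of $N$ are given by Fact \ref{fact:intersection}: diagonal entry $(\tpi)^k (\sum_{j\in J}\a_j)/\prod_{j\in J}\a_j$, and off-diagonal entry $\pm(\tpi)^k/\prod_{j\in J\cap J'}\a_j$ when $|J\cap J'| = k$, otherwise zero. In the simplest case $k=n=1$, a direct $2\times 2$ computation yields $\det N$ proportional to $(\sum_j\a_j)/\prod_j\a_j = -\a_0/\prod_{j\ne 0}\a_j$ (up to $(\tpi)^2$), which is nonzero by the non-integer condition \eqref{eq:non-int}. This suggests that in general $\det N$ factors into a product of sums $\sum_{j\in S}\a_j$ (for certain subsets $S$) divided by a monomial in the $\a_j$'s, all of which remain nonzero by \eqref{eq:non-int} combined with $\sum_j\a_j = 0$.

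The main obstacle is executing the determinant computation for general $k$ and $n$. I would attempt induction on $n$ (or on the rank of $N$), performing cofactor expansion along a row indexed by a carefully chosen $J_0$ so that the resulting minors have the same combinatorial shape on a smaller arrangement, and then identifying the result with a product of known factors. A cleaner alternative that bypasses the determinant is to establish spanning directly via the form identity
\begin{align*}
\sum_{r=0}^{k+1}(-1)^r \f\la K\setminus\{i_r\}\ra = 0 \qquad (K=\la i_0\cdots i_{k+1}\ra),
\end{align*}
which holds in $\W^k(T_z)$ by Laplace expansion of a $(k+2)\times(k+2)$ determinant with two equal rows (the common row having entries $z_{l,i_r}$). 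Iterating this identity with $K = J\cup\{p\}$ when $p\notin J$ and $K = J\cup\{a\}$ for a suitable $a$ when $q\in J$ allows one to rewrite any $\f\la J\ra$ ($J\in\cJ$) as a combination of forms indexed by $\under{q}{\cJ}{p}$; since $\{\f\la J\ra\}_{J\in\cJ}$ spans the cohomology, the cardinality match then upgrades spanning to a basis. The delicate point in this alternate route is to arrange the iteration so that it terminates, since the relations do not obviously decrease any naive measure of complexity on $J$.
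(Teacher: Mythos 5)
The paper gives no proof of this statement---it is quoted as a Fact from \cite{GM-PC}---so your argument has to stand on its own, and as written it does not close. The set-up is fine: $|\under{q}{\cJ}{p}|=\binom{k+n}{k}$ matches the dimension, and non-singularity of the Gram matrix $N$ would give linear independence of both candidate sets and hence, by the dimension count, the basis property. But neither of your two routes is completed. On the first route you verify only $k=n=1$ and then conjecture the factorization of $\det N$; that determinant evaluation for general $k,n$ (a Varchenko/Douai--Terao-type formula) is the entire content of the statement and is left open.

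The second route is not merely incomplete but cannot work as stated. The identity $\sum_{r}(-1)^r\f\la K\setminus\{i_r\}\ra=0$ does hold in $\W^k(T_z)$, but precisely because it is an identity of honest differential forms, iterating it can never take you outside the $\C(\a)$-span in $\Phi^k$ of the forms you allow on the right-hand side. That span has dimension at most $\binom{k+n}{k}$, whereas $\dim\Phi^k=\binom{k+n+1}{k}$ (already for $k=1$ the forms $\f\la 0j\ra$, $j=1,\dots,n+2$, are linearly independent as $1$-forms, while $\under{q}{\cJ}{p}$ has only $n+1$ elements). So some $\f\la J\ra$ cannot be rewritten in terms of $\{\f\la J'\ra\mid J'\in\under{q}{\cJ}{p}\}$ by any relation valid in $\Phi^k$; the spanning statement is only true modulo $\pm\w\we\Phi^{k-1}$, and you must additionally invoke the relations $\w\we\f\la J'\ra\equiv 0$ for $\#J'=k$, i.e. $\sum_{j\notin J'}\pm\a_j\,\f\la \{j\}\cup J'\ra\equiv 0$. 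This is where the parameters $\a$ and the hypothesis (\ref{eq:non-int}) must enter---note your target basis depends on $\a$-genericity, while your rewriting identity is $\a$-independent, which is already a sign something is missing. Your worry about termination is a symptom of this: the iteration cannot terminate because its target is unreachable without the extra relations. A correct elementary argument combining both families of relations is essentially the $\b\nbc$-basis machinery of Falk--Terao that the paper itself uses to prove Fact \ref{fact:cohomology-basis-Z1}.
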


Let $\Phi^l \subset \W^l (T_z)$ be the $\C (\a)$-vector subspace 
spanned by the $\f \la J \ra$ with $\#J =l+1$. 
\begin{fact}[{\cite[Theorem 6]{KitaNoumi}}]
  \label{fact:log-form-iso}
  If $z\in Z^{(0)}$, then we have 
  \begin{align*}
    H^k(\W^\bu(T_z),\na^{\pm\a}) =\Phi^k / (\pm \w \we \Phi^{k-1}) .
  \end{align*}
\end{fact}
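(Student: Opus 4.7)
The plan is to identify the logarithmic quotient with the twisted cohomology in three steps: construct a natural comparison map, show it is surjective by a pole-reduction argument, and match dimensions using the facts stated above. First note that for any $J = \la j_0 \dots j_{k-1}\ra$, each factor $\dt \log(L_{j_i}/L_{j_0})$ of $\f\la J\ra$ is closed, so $\dt \f\la J\ra = 0$. Consequently $\na^{\pm \a}(\f\la J\ra) = \pm \w \we \f\la J\ra$, which shows $\pm \w \we \Phi^{k-1} \subset \na^{\pm\a}(\W^{k-1}(T_z))$ and gives a well-defined $\C(\a)$-linear map
\begin{equation*}
\iota \colon \Phi^k/(\pm \w \we \Phi^{k-1}) \longrightarrow H^k(\W^\bu(T_z),\na^{\pm\a}).
\end{equation*}

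For surjectivity, any $\xi \in \W^k(T_z)$ has the shape $(P/\prod_j L_j^{m_j})\, \dt t_1 \we \cdots \we \dt t_k$, and I would argue by induction on $\max_j m_j$. Whenever some $m_j \geq 2$, apply partial fractions to isolate the offending factor and construct $\eta \in \W^{k-1}(T_z)$ whose $\na^{\pm\a}$-image reproduces the leading pole along $L_j = 0$, up to a factor given by a nonzero $\C$-linear combination of the $\a_i$. Subtracting $\na^{\pm\a}\eta$ strictly decreases the pole order along $L_j = 0$; iterating across all $j$, $\xi$ becomes, modulo $\na^{\pm\a}(\W^{k-1}(T_z))$, a form with only simple poles. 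By a partial-fraction decomposition adapted to arrangements in general position, such a form lies in $\Phi^k$, and $\iota$ is surjective.

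For injectivity I would count dimensions. By the preceding Fact, $\dim H^k(\W^\bu(T_z),\na^{\pm\a}) = \binom{k+n}{k}$, and the classes $\{\f\la J\ra \mid J \in \under{q}{\cJ}{p}\}$ form a basis of the target. Using the Orlik--Solomon-type relations among the $\f\la J\ra$ (arising from the linear dependence of any $k+2$ columns of $z$) together with relations modulo $\w \we \Phi^{k-1}$, one checks that these same $\binom{k+n}{k}$ classes already span $\Phi^k/(\pm \w \we \Phi^{k-1})$; since $\iota$ sends a spanning set to a basis, it is an isomorphism. The main obstacle is the pole-reduction step: the explicit $\eta$ constructed there introduces a denominator of the form $\sum_{j \in S} \a_j$ for some subset $S$, and non-vanishing of this sum is exactly what the non-integrality assumption (2.1) is designed to guarantee. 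Organizing the reduction uniformly across all hyperplanes, rather than hyperplane by hyperplane, and matching it with the Orlik--Solomon combinatorics is the technical heart of the argument.
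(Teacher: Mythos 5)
The paper does not prove this statement at all: it is quoted as a Fact with a citation to Kita--Noumi (Theorem 6), so there is no in-paper argument to compare against. Your outline --- a comparison map $\Phi^k/(\pm\w\we\Phi^{k-1})\to H^k(\W^\bu(T_z),\na^{\pm\a})$ induced by the inclusion, surjectivity via reduction of pole order modulo $\na^{\pm\a}(\W^{k-1}(T_z))$, and injectivity by showing the source is spanned by the $\binom{k+n}{k}$ classes $\{\f\la J\ra \mid J\in \under{q}{\cJ}{p}\}$ that Fact 2.4 already identifies as a basis of the target --- is essentially the standard proof of the cited theorem, and the overall architecture is sound.

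Two cautions on the parts you defer. First, your remark that the pole-reduction introduces denominators of the form $\sum_{j\in S}\a_j$ for arbitrary subsets $S$ is not quite right, and if it were literally true the proof would break: assumption (2.1) only guarantees that each individual $\a_j$ (including $\a_0=-\sum_{j\geq 1}\a_j$, the exponent at infinity) is non-integral, not that arbitrary partial sums are nonzero. The reason the argument nevertheless works is precisely that $z\in Z^{(0)}$: for an arrangement in general position the only dense edges of $\cA^h_z$ are the hyperplanes themselves, so a properly organized reduction (hyperplane by hyperplane in the affine chart, plus one step for the pole at infinity) only ever divides by quantities of the form $\a_j+m$ with $m\in\Z$, which (2.1) controls. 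You should make this explicit rather than gesturing at subset sums, since the whole point of restricting to $Z^{(0)}$ here is that no blow-up (and hence no resonance condition on sums over tuples, like (2.2)) is needed. Second, the spanning claim for injectivity rests on the Orlik--Solomon relations together with the relations $\sum_j\a_j\,\f\la j\, j_1\dots j_k\ra\equiv 0$ coming from $\w\we\Phi^{k-1}$; this is exactly the $\nbc$/$\b\nbc$ combinatorics that the paper invokes elsewhere (Fact 2.7), and it is a genuine, if standard, piece of work that your sketch leaves entirely open.
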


Using the argument in \cite[Examples 4.5 and 4.6]{FT}, 
we can take bases of $H^k(\W^\bu(T_z),\na^{\pm\a})$ with $z\in Z^{(1)}$. 
For $J=\la j_0 \dots j_k \ra \in \cJ$ and $q\not\in J$, 
we denote
\begin{align*}
  \under{j_l}{J}{q} =\la j_0 \dots j_{l-1} ~q~ j_{l+1} \dots j_k \ra .
\end{align*}
\begin{fact}[\cite{FT}]
  \label{fact:cohomology-basis-Z1}
  Let $z\in Z^{(1)}$. 
  We take $J^{\circ}=\la j_0 \dots j_k \ra$ such that $|z\la J^{\circ}\ra |=0$. 
  For $l=0,\dots ,k$ and $p,q\not\in J^{\circ}$ with $p\neq q$, 
  each of the sets
  \begin{align}
    \label{eq:beta-nbc1}
    &\{ \f \la J\ra \mid J\in \under{j_l}{\cJ}{p}-\{ \under{j_l}{J^{\circ}}{p}\} \} ,\\
    \label{eq:beta-nbc2}
    &\{ \f \la J\ra \mid J\in \under{q}{\cJ}{p}-\{ \under{j_l}{J^{\circ}}{p}\} \} ,\\
    \label{eq:beta-nbc3}
    &\{ \f \la J\ra \mid J\in \under{q}{\cJ}{j_l}-\{ J^{\circ} \} \} 
  \end{align}
  gives bases of $H^k(\W^\bu(T_z),\na^{\pm\a})$.
\end{fact}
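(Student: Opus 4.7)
The plan is to check that each candidate set has cardinality $\binom{k+n}{k}-1=\dim H^k(\W^\bu(T_z),\na^{\pm\a})$ (by the preceding dimension fact) and then to show each set spans $H^k(\W^\bu(T_z),\na^{\pm\a})$; linear independence then follows by dimension counting. The $+\a$ and $-\a$ cases are identical, so I focus on the former. Two inputs are essential. By (\ref{eq:phi}),
\begin{align*}
  \f\la J^\circ\ra = 0 \quad \textrm{in } \W^k(T_z),
\end{align*}
and hence in cohomology. Moreover, $\f\la J^\circ-\{j_l\}\ra$ is a closed logarithmic $(k-1)$-form, so $\w\we\f\la J^\circ-\{j_l\}\ra=\na^{\a}\f\la J^\circ-\{j_l\}\ra$ is $\na^{\a}$-exact. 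Expanding this wedge via the elementary identities for $\dt\log L_j\we\f\la J^\circ-\{j_l\}\ra$, and invoking $\f\la J^\circ\ra=0$, one obtains a nontrivial cohomological relation
\begin{align*}
  \sum_{j\notin J^\circ}\varepsilon_j\a_j\,\f\la\under{j_l}{J^\circ}{j}\ra \equiv 0 \quad \textrm{in } H^k(\W^\bu(T_z),\na^{\a}),
\end{align*}
for appropriate signs $\varepsilon_j\in\{\pm 1\}$.

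Next I would establish spanning. For any $J\in\cJ$ and any of the three target sets $\under{v}{\cJ}{u}$, the reduction of $\f\la J\ra$ modulo $\na^{\a}$-exact forms to a linear combination over $\under{v}{\cJ}{u}$ is carried out by iterating the cocycle identity $\w\we\f\la S\ra\equiv 0$ for $|S|=k$. The resulting coefficients are rational in $\a$ (with denominators among the $\a_j$, nonzero by (\ref{eq:non-int})) and \emph{independent} of $z$: only the symbols $\f\la J'\ra$ themselves carry $z$-dependence. By Fact \ref{fact:basis} these identities realize a basis when $z\in Z^{(0)}$, so the same identities make each of the three $\binom{k+n}{k}$-element sets span $H^k(\W^\bu(T_z),\na^{\a})$ for $z\in Z^{(1)}$ as well. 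Since the dimension drops by one, each such spanning set supports a unique (up to scalar) linear relation in $H^k$.

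For (\ref{eq:beta-nbc3}) the conclusion is immediate: $J^\circ\in\under{q}{\cJ}{j_l}$ because $j_l\in J^\circ$ and $q\notin J^\circ$, and the relation is simply $\f\la J^\circ\ra=0$, so removing $J^\circ$ yields a basis. For (\ref{eq:beta-nbc1}) and (\ref{eq:beta-nbc2}), I would apply the displayed cohomological relation as the unique linear dependence. Among the summands $\f\la\under{j_l}{J^\circ}{j}\ra$ with $j\notin J^\circ$, one checks that only the $j=p$ term lies in $\under{j_l}{\cJ}{p}$ (respectively $\under{q}{\cJ}{p}$); rewriting the remaining summands in the chosen spanning set via the same reduction procedure yields a single relation inside that set, whose leading coefficient on $\f\la\under{j_l}{J^\circ}{p}\ra$ is $\pm\a_p$. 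Removing $\under{j_l}{J^\circ}{p}$ then produces a basis.

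The main obstacle lies in this last step: verifying that the change-of-basis reductions applied to the remaining summands $\f\la\under{j_l}{J^\circ}{j}\ra$ ($j\neq p$, $j\notin J^\circ$) do not contribute cancelling terms to the coefficient of $\f\la\under{j_l}{J^\circ}{p}\ra$. This reduces to tracking the $\a$-dependence of iterated applications of $\w\we\f\la S\ra\equiv 0$, which is transparent enough to ensure non-vanishing; concretely, one may specialize $\a$ to values where the direct $\pm\a_p$ contribution dominates any secondary terms, forcing the coefficient to be nonzero as a rational function of $\a$. Once this is established, removing $\under{j_l}{J^\circ}{p}$ from the spanning set gives a basis in all three cases, completing the proof.
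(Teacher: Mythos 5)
Your proposal has two genuine gaps, one of which you flag yourself. The first is the spanning step. The identities $\w\we\f\la S\ra\equiv 0$ (for $\#S=k$) have $z$-independent coefficients, so they do show that each of the three displayed sets spans the same subspace of $H^k(\W^\bu(T_z),\na^{\pm\a})$ as the collection of \emph{all} logarithmic $k$-forms $\f\la J\ra$, for any $z\in Z^{(01)}$. But they do not show that this subspace is the whole cohomology group when $z\in Z^{(1)}$: the statement that logarithmic forms generate $H^k$ (Fact \ref{fact:log-form-iso}) is only available for $z\in Z^{(0)}$, and since the dimension drops by one as $z\to z_0$, spanning cannot be transferred from $Z^{(0)}$ by any naive limit argument. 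Establishing surjectivity of $\Phi^k\to H^k$ for the non-normal-crossing arrangement under the nonresonance conditions (\ref{eq:non-int}) and (\ref{eq:non-int-J}) is precisely the nontrivial input that the paper obtains by quoting Falk--Terao; your argument simply assumes it.

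The second gap is the one you call ``the main obstacle.'' For (\ref{eq:beta-nbc1}) and (\ref{eq:beta-nbc2}), granting spanning, the $\binom{k+n}{k}$-element set carries a one-dimensional space of relations, and you must show the (essentially unique) relation has nonzero coefficient on $\f\la\under{j_l}{J^{\circ}}{p}\ra$ before you may delete that element. Your candidate relation $\sum_{j\notin J^{\circ}}\a_j\,\f\la\under{j_l}{J^{\circ}}{j}\ra\equiv 0$ is correct (it follows from $\w\we\f\la J^{\circ}-\{j_l\}\ra\equiv 0$ together with $\f\la J^{\circ}\ra=0$), but after re-expressing the terms with $j\neq p$ inside the chosen set, the coefficient of $\f\la\under{j_l}{J^{\circ}}{p}\ra$ is $\a_p$ plus uncontrolled secondary contributions, themselves rational in $\a$. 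The suggestion to ``specialize $\a$ to values where the direct $\pm\a_p$ contribution dominates'' is not an argument --- nothing bounds those secondary terms. The paper avoids both difficulties at once: after a projective change of coordinates placing $p$ (for (\ref{eq:beta-nbc1}), (\ref{eq:beta-nbc2})) or $j_l$ (for (\ref{eq:beta-nbc3})) at infinity, it identifies each displayed set with a $\b\nbc$ monomial basis from \cite[Examples 4.5 and 4.6]{FT} for a suitably chosen linear order, and the basis property is then a cited theorem rather than something to be re-derived. Your treatment of (\ref{eq:beta-nbc3}) (delete $J^{\circ}$ because $\f\la J^{\circ}\ra=0$ identically) would be fine if the spanning issue were resolved, but (\ref{eq:beta-nbc1}) and (\ref{eq:beta-nbc2}) remain unproved as written.
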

\begin{proof}
  First, we show that (\ref{eq:beta-nbc1}) and (\ref{eq:beta-nbc2}) are bases. 
  By considering a suitable coordinates change if necessary, 
  we may assume that $p=0$ (and hence $0\not\in J^{\circ}$). 
  In this case, the arrangement $\cA_z$ of hyperplanes is 
  in a general position to infinity. 
  According to \cite[Example 4.5]{FT}, 
  we obtain a ``monomial'' basis of $H^k(\W^\bu(T_z),\na^{\pm\a})$ 
  corresponding to  
  a linear order on the index set $\{ 1,2,\dots ,k+n+1\}$. 
  \begin{enumerate}[(i)]
  \item We take a linear order $\prec$ such that 
    \begin{align*}
      {\min}_{\prec} \{ 1,2,\dots ,k+n+1\} 
      ={\min}_{\prec} (J^{\circ}) =j_l , 
    \end{align*}
    where $\min_{\prec}$ implies the minimum with $\prec$. 
    The beta-system $\b\nbc (\cA_z)$ then consists of
    $I=\{i_1 ,\dots ,i_k\}$ such that 
    $I\not\ni j_l$ and $I\neq \{ j_0 ,\dots ,j_{l-1}, j_{l+1} ,\dots, j_k \}$. 
    The set 
    \begin{align}
      \label{eq:beta-nbc1-1}
      \left\{  
        \frac{\dt L_{i_1}}{L_{i_1}} \we \dots \we \frac{\dt L_{i_k}}{L_{i_k}} \mid
        \{ i_1,\dots ,i_k\} \in \b\nbc (\cA_z)
      \right\}
    \end{align}
    gives bases of $H^k(\W^\bu(T_z),\na^{\pm\a})$. 
    On the other hand, we have
    \begin{align*}
      \frac{\dt L_{i_1}}{L_{i_1}} \we \dots \we \frac{\dt L_{i_k}}{L_{i_k}}
      =\dt\log L_{i_1} \we \dots \we \dt\log L_{i_k}
      =\f \la 0 i_1 \dots i_k\ra 
    \end{align*}
    and 
    \begin{align*}
      \{ 0, i_1,\dots ,i_k\} \in \under{j_l}{\cJ}{0}-\{ \under{j_l}{J^{\circ}}{0}\}
      \Leftrightarrow 
      \{ i_1,\dots ,i_k\} \in \b\nbc (\cA_z) .
    \end{align*}
    Therefore, the set (\ref{eq:beta-nbc1}) coincides with (\ref{eq:beta-nbc1-1}) 
    up to the sign. 
  \item We take a linear order $\prec$ such that 
    \begin{align*}
      q={\min}_{\prec} \{ 1,2,\dots ,k+n+1\} 
      \precneqq {\min}_{\prec} (J^{\circ}) =j_l . 
    \end{align*}
    The beta-system $\b\nbc (\cA_z)$ then consists of
    $I=\{i_1 ,\dots ,i_k\}$ such that 
    $I\not\ni q$ and $I\neq \{ j_0 ,\dots ,j_{l-1}, j_{l+1} ,\dots, j_k \}$. 
    Similar to the above, 
    the set 
    \begin{align}
      \label{eq:beta-nbc2-1}
      \left\{  
        \f \la 0 i_1 \dots i_k\ra \mid
        \{ i_1,\dots ,i_k\} \in \b\nbc (\cA_z)
      \right\}
    \end{align}
    gives bases of $H^k(\W^\bu(T_z),\na^{\pm\a})$. 
    Because of
    \begin{align*}
      \{ 0, i_1,\dots ,i_k\} \in \under{q}{\cJ}{0}-\{ \under{j_l}{J^{\circ}}{0}\}
      \Leftrightarrow 
      \{ i_1,\dots ,i_k\} \in \b\nbc (\cA_z) ,
    \end{align*}
    the set (\ref{eq:beta-nbc2}) coincides with (\ref{eq:beta-nbc2-1}) 
    up to the sign. 
  \end{enumerate}
  Next, we consider (\ref{eq:beta-nbc3}). 
  Using a suitable coordinate change if necessary, we may assume that
  $j_l =0$. 
  We can apply the discussion in \cite[Example 4.6]{FT}. 
  As the hyperplane $(L_q =0)$ is generic, 
  the beta-system $\b\nbc (\cA_z)$ defined by the order satisfying
  \begin{align*}
    q={\min}_{\prec} \{ 1,2,\dots ,k+n+1\}
  \end{align*}
  gives a monomial basis. 
  $\b\nbc (\cA_z)$ consists of $I=\{i_1 ,\dots ,i_k\}$ such that 
  $I\not\ni q$ and $I\neq \{ j_0 ,\dots ,j_{l-1}, j_{l+1} ,\dots, j_k \}$. 
  As $j_l=0$, we have 
  \begin{align*}
    \{ 0, i_1,\dots ,i_k\} \in \under{q}{\cJ}{0}-\{ J^{\circ} \}
    \Leftrightarrow 
    \{ i_1,\dots ,i_k\} \in \b\nbc (\cA_z) .
  \end{align*}
  Thus, the monomial basis 
  $\{ \f \la 0 i_1 \dots i_k\ra \mid \{ i_1,\dots ,i_k\} \in \b\nbc (\cA_z)\}$ 
  coincides with (\ref{eq:beta-nbc3}) up to the sign.
\end{proof}

\section{Vanishing (co)cycle and intersection numbers}
\label{section:vanishing}
In this section, we study the twisted homology and cohomology groups 
and intersection forms for $z_0 \in Z^{(1)}$ using 
those for $z \in Z^{(0)}$. 
The arguments for twisted cohomology and homology are parallel.  

We fix $z_0 \in Z^{(1)}$. 
There exists a unique $J\in \cJ$ such that $|z_0\la J\ra| =0$. 
We denote this by $J^{\van}=\la j_0 \dots j_k\ra$, that is, 
\begin{align*}
  |z_0 \la J^{\van} \ra| = 0 ,\qquad 
  |z_0 \la J \ra| \neq 0 \ (J\neq J^{\van}) .
\end{align*}
Let $\e_0 ,\dots ,\e_k$ be sufficiently small positive real numbers 
such that the matrix $z$ obtained by replacing the $j_k$-th column of $z_0$ with 
$\TP (z_{0,j_k}+\e_0 ,\dots ,z_{k,j_k}+\e_k)$ 
belongs to $Z^{(0)}$ (we can take such $\e_j$ because $Z^{(0)}$ is
a Zariski open subset of $Z\simeq \C^{(k+1)\times (k+n+2)}$). 
Then, $z\in Z^{(0)}$ is sufficiently close to $z_0$ in $Z$. 
In other words, $\cA_z$ is obtained as a perturbation of $(L_{j_k}=0) \in \cA_{z_0}$. 

Recall that we have assumed  
\begin{align}
  \label{eq:condition-alpha}
  \a_0,\a_1,\dots,\a_{k+n},\a_{k+n+1} \not\in \Z , \quad 
  \sum_{j=0}^{k+n+1} \a_j=0 , \quad
  \sum_{p\in J^{\van}}\a_p \not\in \Z ,
\end{align}
or, equivalently, 
\begin{align}
  \label{eq:condition-lambda}
  \l_0,\l_1,\dots,\l_{k+n},\l_{k+n+1} \neq 1 , \quad 
  \prod_{j=0}^{k+n+1} \l_j=1 , \quad
  \prod_{p\in J^{\van}}\l_p \neq 1 ,
\end{align}
when we assign $\a_j$ and $\l_j$ to complex numbers. 

\subsection{Twisted cohomology}
First, we consider twisted cohomology groups.
To distinguish the notation for $z_0$ and $z$, 
we use the following:
\begin{align*}
  &\w_0 =\w|_{z=z_0} =\sum_{j=1}^{k+n+1} \a_j \dt \log L_j (t;z_0), \quad 
  \na_0^{\pm \a} = d_t \pm \w_0 \we ,\\
  &\cI^c_0 :
  H^k(\W^\bu(T_{z_0}),\na_0^{\a}) \times H^k(\W^\bu(T_{z_0}),\na_0^{-\a}) \to \C(\a) ,
\end{align*}
whereas $\w$, $\na^{\pm \a}$, $\cI^c$ are used for $H^k(\W^\bu(T_{z}),\na^{\pm \a})$. 

Let $\f^{\van}_{\pm} \in H^k(\W^\bu(T_{z}),\na^{\pm \a})$ be the element expressed by
$\f \la J^{\van}\ra$, which is a \textit{vanishing form} as $z \to z_0$ 
because of (\ref{eq:phi}). 

\begin{definition}[Limit of a twisted cocycle]
  As the denominator of each logarithmic form $\f \la J \ra$ 
  dose not become the zero polynomial in $t_1,\dots ,t_k$ 
  as $z\to z_0$, we can define the \textit{limit} 
  $\lim_{z\to z_0} \f \la J \ra \in \W^l (T_{z_0})$ of 
   $\f \la J \ra \in \W^l (T_{z})$.    
\end{definition}
Note that $\lim_{z\to z_0} \f \la J^{\van} \ra =0 \in \W^k (T_{z_0})$. 
\begin{proposition}
  \label{prop-well-def-cohomology}
  The correspondences
  \begin{align*}
    \fl^c_{\pm} : H^k(\W^\bu(T_{z}),\na^{\pm \a})
    =\Phi^k / (\pm \w \we \Phi^{k-1})
    &\to H^k(\W^\bu(T_{z_0}),\na_0^{\pm \a});\\
    \f &\mapsto \lim_{z\to z_0} \f
  \end{align*}
  are well-defined $\C(\a)$-linear maps. 
\end{proposition}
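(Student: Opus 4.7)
The plan is to verify two things: first, that $\lim_{z\to z_0}\f$ is a well-defined element of $\W^k(T_{z_0})$ for every $\f\in \Phi^k$, and second, that the subspace $\pm\w\we\Phi^{k-1}$ is mapped into $\na_0^{\pm\a}(\W^{k-1}(T_{z_0}))$. The $\C(\a)$-linearity is then automatic, since the generators $\f\la J\ra$ do not depend on $\a$ and I extend by $\C(\a)$-linearity from the outset.

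First I would establish the limit on generators. By formula (\ref{eq:phi}), $\f\la J\ra$ equals $|z\la J\ra|/\prod_{p=0}^k L_{j_p}(t;z)$ multiplied by $\dd t_1\we\cdots\we\dd t_k$. As $z\to z_0$, the numerator $|z\la J\ra|$ converges to $|z_0\la J\ra|$ in $\C$, and each linear form $L_{j_p}(t;z)$ converges coefficient-wise to $L_{j_p}(t;z_0)$, whose zero locus lies in $\P^k-T_{z_0}$ by construction of $\cA_{z_0}^h$. Hence $\lim_{z\to z_0}\f\la J\ra$ exists as an element of $\W^k(T_{z_0})$, and vanishes identically precisely when $J=J^{\van}$. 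Extending $\C(\a)$-linearly yields a map $\Phi^k\to \W^k(T_{z_0})$, which I compose with the projection onto $H^k(\W^\bu(T_{z_0}),\na_0^{\pm\a})$.

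The key step is to show that $\pm\w\we\Phi^{k-1}$ lies in the kernel of this composition. For $\eta\in\Phi^{k-1}$, the definition of $\na^{\pm\a}$ gives
\begin{align*}
  \pm\w\we\eta = \na^{\pm\a}(\eta) - \dt\eta .
\end{align*}
Every generator $\f\la J'\ra$ of $\Phi^{k-1}$ is a wedge of $\dt$-closed $1$-forms $\dt\log(L_{j_i}/L_{j_0})$, and is therefore $\dt$-closed itself; by linearity $\dt\eta=0$, whence $\pm\w\we\eta = \na^{\pm\a}(\eta)$. Taking limits (the wedge product commutes with the pointwise limit of coefficients by the first step), I obtain
\begin{align*}
  \lim_{z\to z_0}(\pm\w\we\eta) = \pm\w_0\we\eta_0 = \na_0^{\pm\a}(\eta_0) ,
\end{align*}
where $\eta_0 =\lim_{z\to z_0}\eta$; this represents the zero class in $H^k(\W^\bu(T_{z_0}),\na_0^{\pm\a})$.

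The main obstacle, modest as it is, is the first step: making precise that the wedge product of logarithmic forms passes to the limit $z\to z_0$ and yields an element of $\W^\bu(T_{z_0})$ with the expected pole structure, rather than an object with spurious singularities. Once this is established, the $\dt$-closedness of logarithmic forms of degree at most $k$ essentially automates the descent to the quotient, and the same argument applies uniformly to both signs $\pm\a$.
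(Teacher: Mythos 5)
Your proposal is correct and follows essentially the same route as the paper: both reduce via the identification $H^k(\W^\bu(T_{z}),\na^{\pm \a})=\Phi^k/(\pm\w\we\Phi^{k-1})$ to showing that the limit of $\pm\w\we\eta$ for $\eta\in\Phi^{k-1}$ is a $\na_0^{\pm\a}$-coboundary, using the $\dt$-closedness of logarithmic forms to write $\lim_{z\to z_0}(\w\we\eta)=\w_0\we\eta_0=\na_0^{\pm\a}(\pm\eta_0)$. The only difference is that you spell out the preliminary point that the limit exists in $\W^k(T_{z_0})$, which the paper disposes of in the definition immediately preceding the proposition.
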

\begin{proof}
  By Fact \ref{fact:log-form-iso}, it is sufficient to show that
  the image of each $\w \we \f\la j_0 j_1 \cdots j_{k-1} \ra$ 
  is zero in $H^k(\W^\bu(T_{z_0}),\na_0^{\pm \a})$. 
  As $\lim_{z\to z_0}\f\la j_0 j_1 \dots j_{k-1} \ra$ is $\dt$-closed, 
  we have
  \begin{align*}
    \lim_{z\to z_0} (\w \we \f\la j_0 j_1 \dots j_{k-1} \ra)
    &=\w_0 \we \lim_{z\to z_0} \f\la j_0 j_1 \dots j_{k-1} \ra \\
    &=\na_0^{\pm \a} \left( \pm \lim_{z\to z_0} \f\la j_0 j_1 \dots j_{k-1} \ra\right) .
  \end{align*}
  Thus, the claim is proved. 
\end{proof}
We define subspaces of $H^k(\W^\bu(T_{z}),\na^{\pm \a})$ as 
\begin{align*}
  \la \f^{\van}_{\pm}\ra &=\C(\a)\cdot \f^{\van}_{\pm} 
  \subset H^k(\W^\bu(T_{z}),\na^{\pm \a}), \\
  \la \f^{\van}_{-}\ra^{\perp} &= \{ \f \in H^k(\W^\bu(T_{z}),\na^{\a}) \mid 
  \cI^c (\f ,\f^{\van}_{-})=0 \} , \\
  \la \f^{\van}_{+}\ra^{\perp} &= \{ \f \in H^k(\W^\bu(T_{z}),\na^{-\a}) \mid 
  \cI^c (\f^{\van}_{+} ,\f)=0 \} .
\end{align*}
By Fact \ref{fact:intersection} and the assumption in (\ref{eq:condition-alpha}), 
we have 
\begin{align}
  \label{eq:self-intersection-van-cohomology}
  \cI^c (\f^{\van}_{+} ,\f^{\van}_{-}) 
  = (2\pi\sqrt{-1})^k \cdot 
  \dfrac{\sum_{j\in J^{\van}}\a_{j}}
  {\prod_{j\in J^{\van}}\a_{j}} \neq 0,  
\end{align}
and hence $\f^{\van}_{\pm} \not\in \la \f^{\van}_{\mp}\ra^{\perp}$. 
This implies that $\la \f^{\van}_{\mp}\ra^{\perp} \subsetneq H^k(\W^\bu(T_{z}),\na^{\pm \a})$. 
We take $p\not\in J^{\van} =\{ j_0 ,\dots  ,j_k \}$, 
$l\in \{ 0 ,\dots ,k\}$, and 
set $\cJ^{\perp}=\under{j_l}{\cJ}{p} -\{ \under{j_l}{J^{\van}}{p}\}$. 
Because 
\begin{align}
  \label{eq:J-Jvan}
  \# (J\cap J^{\van} )<k, \quad J\in \cJ^{\perp} ,  
\end{align}
$\{ \f\la J\ra \mid J\in \cJ^{\perp} \}$ expresses elements in $\la \f^{\van}_{\mp}\ra^{\perp}$.
As the $\{ \f\la J\ra \mid J\in \cJ^{\perp}\}$ are linearly independent 
because of Fact \ref{fact:basis}, 
we have $\dim \la \f^{\van}_{\mp}\ra^{\perp} =\binom{k+n}{k}-1$ with
bases formed by $\{ \f\la J\ra \mid J\in \cJ^{\perp}\}$. 
Thus, we obtain the direct sum decompositions 
\begin{align}
  \label{eq:d-sum-cohomology}
  H^k(\W^\bu(T_{z}),\na^{\pm \a})=\la \f^{\van}_{\pm}\ra \oplus \la \f^{\van}_{\mp}\ra^{\perp} 
\end{align}
and the projections onto the second components
\begin{align}
  \label{eq:2nd-proj-cohomology-1}
  \fp^c_{+} : H^k(\W^\bu(T_{z}),\na^{\a}) \to \la \f^{\van}_{-}\ra^{\perp} ;\quad &
  \f \mapsto \f -\frac{\cI^c(\f,\f^{\van}_{-})}{\cI^c(\f^{\van}_{+},\f^{\van}_{-})}\f^{\van}_{+} ,\\
  \nonumber
  \fp^c_{-} : H^k(\W^\bu(T_{z}),\na^{-\a}) \to \la \f^{\van}_{+}\ra^{\perp} ;\quad &
  \f \mapsto \f -\frac{\cI^c(\f^{\van}_{+},\f)}{\cI^c(\f^{\van}_{+},\f^{\van}_{-})}\f^{\van}_{-} .
\end{align}
By $\lim_{z\to z_0}\f\la J^{\van}\ra =0$, the images of $\f^{\van}_{\pm}$ 
under $\fl^c_{\pm}$ are zero. Hence, we obtain the following commutative diagrams. 
\begin{align}
  \label{diagram-cohomology}
  \xymatrix{
    &H^k(\W^\bu(T_{z}),\na^{\pm \a}) \ar[dl]_{\fp^c_{\pm}}\ar[dr]^{\fl^c_{\pm}}& \\
    \la \f^{\van}_{\mp}\ra^{\perp} \ar[rr]_{\fl^c_{\pm}|_{\la \f^{\van}_{\mp}\ra^{\perp} }} 
    && H^k(\W^{\bu}(T_{z_0}),\na^{\pm \a}_0)
  }
\end{align}
\begin{theorem}\label{th-cohomology-iso}
  The $\C(\a)$-linear maps
  \begin{align*}
    \fl^c_{\pm}|_{\la \f^{\van}_{\mp}\ra^{\perp}} :
    \la \f^{\van}_{\mp}\ra^{\perp} \to H^k(\W^{\bu}(T_{z_0}),\na^{\pm \a}_0)
  \end{align*}
  are isomorphisms that preserve the intersection form, that is, 
  \begin{align*}
    \cI^c_0 (\fl^c_{+}(\f),\fl^c_{-}(\f'))
    =\cI^c (\f, \f') ,\quad 
    \f \in \la \f^{\van}_{-}\ra^{\perp} ,\ \f' \in \la \f^{\van}_{+}\ra^{\perp} .
  \end{align*}
\end{theorem}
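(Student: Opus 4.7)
The plan is to exploit the explicit bases on both sides of the claimed isomorphism and reduce the preservation statement to a check on basis pairs, where the relevant local computations are insensitive to the degeneracy at $J^{\van}$.

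For the isomorphism, I would first use Fact~\ref{fact:basis} applied to $\under{j_l}{\cJ}{p}$, combined with the observation that every $\f\la J\ra$ with $J\in\cJ^{\perp}$ satisfies $\#(J\cap J^{\van})<k$ (shown in (\ref{eq:J-Jvan})) and therefore pairs trivially with $\f^{\van}_{\mp}$ under $\cI^c$ by Fact~\ref{fact:intersection}. This exhibits $\{\f\la J\ra \mid J\in\cJ^{\perp}\}$ as a basis of $\la\f^{\van}_{\mp}\ra^{\perp}$, of dimension $\binom{k+n}{k}-1$. Under $\fl^c_{\pm}$, each $\f\la J\ra$ is sent to its value at $z_0$, which is well defined via (\ref{eq:phi}) because $J\ne J^{\van}$ keeps the denominator and $|z_0\la J\ra|$ nondegenerate. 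Fact~\ref{fact:cohomology-basis-Z1}, in the form (\ref{eq:beta-nbc1}), identifies this image family with a basis of $H^k(\W^{\bu}(T_{z_0}),\na_0^{\pm\a})$. Since the dimensions match, $\fl^c_{\pm}|_{\la\f^{\van}_{\mp}\ra^{\perp}}$ sends a basis to a basis and is a $\C(\a)$-linear isomorphism.

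For the preservation of the intersection form, I would use $\C(\a)$-bilinearity to reduce to verifying
\[
\cI^c_0\bigl(\fl^c_{+}(\f\la J\ra),\, \fl^c_{-}(\f\la J'\ra)\bigr)=\cI^c(\f\la J\ra,\f\la J'\ra)
\]
for all $J,J'\in\cJ^{\perp}$. The right-hand side, supplied by Fact~\ref{fact:intersection}, depends only on the combinatorics of $J\cap J'$ and on the $\a_j$; it carries no information about the specific $z\in Z^{(0)}$ near $z_0$. The task is to compute the left-hand side and recover the same closed formula. The key point is that the local residue computation behind Fact~\ref{fact:intersection} applies verbatim at $z_0$ for pairs of forms from $\la\f^{\van}_{\mp}\ra^{\perp}$. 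Such a pairing is supported at the codimension-$k$ intersections of the hyperplanes in the denominators, that is, at $k$-subsets of $J\cap J'$ when $\#(J\cap J')=k$ or of $J$ when $J=J'$. Using $\#(J\cap J^{\van})<k$ and $\#(J'\cap J^{\van})<k$, none of these $k$-subsets lies inside $J^{\van}$, so none of the relevant vertices coincides with the single degenerate point $\bigcap_{j\in J^{\van}}(L_j=0)$ of $\cA_{z_0}$. Every relevant vertex is therefore an ordinary normal-crossing point of $\cA_{z_0}$, and the local residue formula produces the same value as in the generic case.

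An alternative route I would keep in reserve is a continuity argument: deform $z$ to $z_0$ inside $Z^{(0)}\cup\{z_0\}$, observe that $\cI^c(\f\la J\ra,\f\la J'\ra)$ is constant along the deformation by Fact~\ref{fact:intersection}, and pass to the limit on the fixed $(\binom{k+n}{k}-1)$-dimensional subspace $\la\f^{\van}_{\mp}\ra^{\perp}$ by using compactly supported representatives whose intersections avoid the degenerate locus. The main obstacle in either approach is precisely this localization step: checking that the degeneracy concentrated at $J^{\van}$ has no effect on pairings among $\cJ^{\perp}$-forms. Once that is in place, the formulas match termwise and the theorem follows.
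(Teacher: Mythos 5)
Your proposal is correct and follows essentially the same route as the paper: both establish the isomorphism by matching the basis $\{\f\la J\ra \mid J\in\cJ^{\perp}\}$ of $\la\f^{\van}_{\mp}\ra^{\perp}$ with the basis of $H^k(\W^{\bu}(T_{z_0}),\na_0^{\pm\a})$ supplied by Fact~\ref{fact:cohomology-basis-Z1}, and both reduce the preservation of $\cI^c$ to basis pairs and observe via (\ref{eq:J-Jvan}) that the degeneracy at $\bigcap_{j\in J^{\van}}(L_j=0)$ does not enter the local (residue/blow-up) computation of $\cI^c_0$ for such pairs. Your spelled-out localization of the residues at the relevant vertices is just a more explicit version of the paper's remark that the blow-up at that point does not influence the evaluation.
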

\begin{proof}
  Recall that $\cJ^{\perp}=\under{j_l}{\cJ}{p} -\{ \under{j_l}{J^{\van}}{p}\}$. 
  As the sets
  $\{\fl^c_{\pm}(\f\la J\ra ) \mid J\in \cJ^{\perp}\}$ of logarithmic forms 
  give bases of $H^k(\W^{\bu}(T_{z_0}),\na^{\pm \a}_0)$
  by Fact \ref{fact:cohomology-basis-Z1}, 
  the maps $\fl^c_{\pm}|_{\la \f^{\van}_{\mp}\ra^{\perp}}$ are isomorphisms. 
  Thus, it suffices to show that 
  \begin{align}
    \label{eq:int-preserve-basis}
    \cI^c_0 (\fl^c_{+}(\f\la J\ra ),\fl^c_{-}(\f\la J'\ra))
    =\cI^c (\f\la J\ra, \f\la J'\ra) ,\quad 
    J,J' \in \cJ^{\perp}. 
  \end{align}
  To evaluate $\cI^c_0$ for logarithmic forms, we should consider 
  the blow-up at the point $\bigcap_{p\in J^{\van}} (L_p =0)$, 
  so that the pole divisor of the pull-back of $\w_0$ is a normal crossing. 
  By (\ref{eq:J-Jvan}), 
  it follows that 
  the blow-up does not influence the evaluation of 
  $\cI^c_0 (\fl^c_{+}(\f\la J\ra ),\fl^c_{-}(\f\la J'\ra))$. 
  Therefore, we obtain claim (\ref{eq:int-preserve-basis}). 
\end{proof}
\begin{cor}\label{cor-intesection-cohomology}
  If $\f \in H^k(\W^{\bu}(T_{z_0}),\na^{\a}_0)$ and 
  $\psi \in H^k(\W^{\bu}(T_{z_0}),\na^{-\a}_0)$ 
  are expressed as 
  $\f =\fl^c_{+}(\f')$ and $\psi =\fl^c_{-}(\psi')$ with 
  $\f' \in H^k(\W^\bu(T_{z}),\na^{\a})$, 
  $\psi' \in H^k(\W^\bu(T_{z}),\na^{-\a})$, 
  then the intersection number $\cI^c_0 (\f, \psi)$ is 
  expressed by those on $H^k(\W^\bu(T_{z}),\na^{\pm \a})$: 
  \begin{align*}
    \cI^c_0 (\f, \psi) =\cI^c (\f' ,\psi')
    -\frac{\cI^c(\f', \f^{\van}_{-}) \cdot \cI^c(\f^{\van}_{+},\psi')}{\cI^c(\f^{\van}_{+},\f^{\van}_{-})} .
  \end{align*}
\end{cor}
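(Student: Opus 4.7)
The plan is to reduce the statement to a direct bilinear computation using the results already established in Theorem \ref{th-cohomology-iso} and the explicit formulas for the projections $\fp^c_{\pm}$ in (\ref{eq:2nd-proj-cohomology-1}). The key observation is that the commutative diagram (\ref{diagram-cohomology}) factorizes the limit maps as
\begin{align*}
  \fl^c_{\pm} = \fl^c_{\pm}|_{\la \f^{\van}_{\mp}\ra^{\perp}} \circ \fp^c_{\pm},
\end{align*}
so that $\f = \fl^c_{+}|_{\la\f^{\van}_{-}\ra^{\perp}}(\fp^c_{+}(\f'))$ and $\psi = \fl^c_{-}|_{\la\f^{\van}_{+}\ra^{\perp}}(\fp^c_{-}(\psi'))$.

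Next I would invoke the intersection-preserving property from Theorem \ref{th-cohomology-iso} applied to $\fp^c_{+}(\f') \in \la \f^{\van}_{-}\ra^{\perp}$ and $\fp^c_{-}(\psi') \in \la \f^{\van}_{+}\ra^{\perp}$, which immediately yields
\begin{align*}
  \cI^c_0 (\f, \psi) = \cI^c\bigl(\fp^c_{+}(\f'), \fp^c_{-}(\psi')\bigr).
\end{align*}

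Then I would substitute the explicit expressions for $\fp^c_{\pm}$ from (\ref{eq:2nd-proj-cohomology-1}) and expand by bilinearity of $\cI^c$. This produces four terms: the main term $\cI^c(\f',\psi')$, two cross terms each equal to $-\cI^c(\f',\f^{\van}_{-})\cI^c(\f^{\van}_{+},\psi')/\cI^c(\f^{\van}_{+},\f^{\van}_{-})$, and a quadratic term equal to $+\cI^c(\f',\f^{\van}_{-})\cI^c(\f^{\van}_{+},\psi')/\cI^c(\f^{\van}_{+},\f^{\van}_{-})$ (since the denominator $\cI^c(\f^{\van}_{+},\f^{\van}_{-})$ appears squared but one factor cancels with the self-intersection). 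Collecting these four contributions yields the asserted formula.

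There is essentially no obstacle: the nontrivial input is already packaged in Theorem \ref{th-cohomology-iso}, whose proof uses Fact \ref{fact:cohomology-basis-Z1} and the observation (\ref{eq:J-Jvan}) that the blow-up of the base point of $J^{\van}$ does not affect intersection numbers of forms indexed by $\cJ^{\perp}$. Given that, the corollary is purely a bilinear algebra identity expressing the intersection on the orthogonal complement in terms of data on the ambient space. The only point worth noting carefully is the non-vanishing of $\cI^c(\f^{\van}_{+},\f^{\van}_{-})$, which was already verified in (\ref{eq:self-intersection-van-cohomology}) under the assumption (\ref{eq:condition-alpha}) on the parameters, so the denominator in the formula is legitimate.
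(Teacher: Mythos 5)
Your proposal is correct and follows exactly the paper's own argument: factor the limit maps through the projections via the commutative diagram (\ref{diagram-cohomology}), apply the intersection-preserving isomorphism of Theorem \ref{th-cohomology-iso} to reduce to $\cI^c(\fp^c_{+}(\f'),\fp^c_{-}(\psi'))$, and expand by bilinearity; your explicit bookkeeping of the four terms (with the quadratic term cancelling one of the two cross terms) is the same computation the paper leaves to the reader.
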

The right-hand side consists of $\cI^c$. 
When we use this formula, we do not need the blow-up. 
\begin{proof}
  By Theorem \ref{th-cohomology-iso} and 
  diagram (\ref{diagram-cohomology}), we have 
  \begin{align*}
    \cI^c_0 (\f, \psi) &=\cI^c_0 (\fl^c_{+}(\f') ,\fl^c_{-}(\psi'))
    =\cI^c_0 (\fl^c_{+}\circ \fp^c_{+}(\f') ,\fl^c_{-} \circ \fp^c_{-}(\psi')) \\
    &=\cI^c (\fp^c_{+}(\f') ,\fp^c_{-}(\psi')) . 
  \end{align*}
  The expressions for $\fp^c_{\pm}$ in (\ref{eq:2nd-proj-cohomology-1}) 
  and the bilinearity of $\cI^c$ yield the formula. 
\end{proof}

\subsection{Twisted homology}
Next, we consider twisted homology groups.
As mentioned in Subsection \ref{subsection:homology-setting}, 
we assume that $z,z_0 \in Z^{(01)}_{\R}$. 
Similar to the previous subsection, 
we use the notation 
\begin{align*}
  \cI^h &:
  H_k(T_{z},u_{z}) \times H_k(T_{z},u_{z}^{-1}) \to \C(\l) ,\\ 
  \cI^h_0 &:
  H_k(T_{z_0},u_{z_0}) \times H_k(T_{z_0},u_{z_0}^{-1}) \to \C(\l) . 
\end{align*}

First, we discuss the cases when $0=j_0\in J^{\van}$. 
Namely, $(L_{j_1}(t;z_0)=0)$,$\dots$, $(L_{j_k}(t;z_0)=0) \in \cA_{z_0}$ intersect 
in the hyperplane $(L_0(t;z_0)=0)$ at infinity. 

For a subset $\{ l_1 ,\dots ,l_k\} \subset \{ 1,\dots ,k+n+1\}$, 
we partition the matrix $z\la 0 l_1 \dots l_k\ra$ into blocks as
\begin{align*}
  z\la 0 l_1 \dots l_k\ra =
  \begin{pmatrix}
    1 & z\la l_1 \dots l_k\ra_{0} \\
    0 & \\
    \vdots & z\la l_1 \dots l_k\ra_{1} \\
    0 & 
  \end{pmatrix}, 
\end{align*}
where $z\la l_1 \dots l_k\ra_{0}$ is a row vector of size $k$ and 
$z\la l_1 \dots l_k\ra_{1}$ is a square matrix of size $k$. 
As $z\la 0 l_1 \dots l_k\ra$ is invertible, 
$z\la l_1 \dots l_k\ra_{1}$ is also invertible and we have 
\begin{align*}
  z\la 0 l_1 \dots l_k\ra^{-1} =
  \begin{pmatrix}
    1 & -z\la l_1 \dots l_k\ra_{0} \cdot z\la l_1 \dots l_k\ra_{1}^{-1} \\
    0 & \\
    \vdots & z\la l_1 \dots l_k\ra_{1}^{-1} \\
    0 & 
  \end{pmatrix}. 
\end{align*}
It is easy to see that 
the (affine) coordinates of the intersection point 
$(L_{l_1}(t;z)=0)\cap \cdots \cap (L_{l_k}(t;z)=0)$ are expressed as
\begin{align*}
  (t_1 ,\dots ,t_k) = -z\la l_1 \dots l_k\ra_{0} \cdot z\la l_1 \dots l_k\ra_{1}^{-1}. 
\end{align*}
We denote this point as $P(l_1 ,\dots ,l_k) \in \R^k$. 
Note that when $z\to z_0$, $P(J^{\van}) =P(j_1 ,\dots ,j_k)$ goes to the hyperplane at infinity
($z_0\la j_1 \dots j_k\ra_{1}$ is not invertible). 
As $z$ is sufficiently close to $z_0$, we may assume that 
the norm of $P(J^{\van})$ is much greater than those of the other $P(l_1 ,\dots ,l_k)$.

There is a chamber $\De^{\van} \subset T_z \cap \R^k$ 
surrounded by $(L_{j_1} =0)$,$\dots$, $(L_{j_k} =0)$ 
such that $(L_j=0)\cap \De^{\van} =\emptyset$ ($j\not\in J^{\van}$).  
This is a (unbounded) $k$-simplex with the vertex $P(J^{\van})$. 
We can regard $\De^{\van}$ as a cone with origin $P(J^{\van})$. 
Note that when $z\to z_0$, this chamber vanishes. 
\begin{lemma}
  We set 
  $$
  (\De^{\van})^{\perp} = \{ \De \in \bC (\cA_z) \mid 
  \overline{\De^{\van}}\cap \overline{\De} =\emptyset \} ,
  $$
  where $\overline{\De}$ is the closure of $\De \subset \R^k$. 
  The cardinality of $(\De^{\van})^{\perp}$ is then equal to 
  $\binom{k+n}{k}-1$. 
\end{lemma}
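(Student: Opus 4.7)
The plan is to combine Fact~\ref{fact-homology-chamber} with a local analysis at the vertex $P:=P(J^{\van})=P(j_1,\dots,j_k)$ of $\De^{\van}$. Fact~\ref{fact-homology-chamber} provides $|\bC(\cA_z)|=\binom{k+n}{k}$ and $|\bC(\cA_{z_0})|=\binom{k+n}{k}-1$, so it will suffice to exhibit exactly one bounded chamber of $\cA_z$ meeting $\overline{\De^{\van}}$; the remaining $\binom{k+n}{k}-1$ will then comprise $(\De^{\van})^{\perp}$.

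The first step is to identify this distinguished chamber, which I will call $\De^{\mathrm{opp}}$, as the unique chamber of $\cA_z$ that fails to deform to a bounded chamber of $\cA_{z_0}$ as $z\to z_0$. Since the only topological change in the arrangement under this limit is that the vertex $P$ escapes to infinity, exactly one bounded chamber of $\cA_z$ must have $P$ as a vertex (namely $\De^{\mathrm{opp}}$), while all others lie entirely away from $P$. Locally at $P$, this $\De^{\mathrm{opp}}$ occupies the unique orthant among the $2^k$ orthants defined by $L_{j_1},\dots,L_{j_k}$ that is opposite to $\De^{\van}$ (obtained by flipping all $k$ signs): its interior extends from $P$ in the direction $-v$, where $v$ denotes the limiting direction of $P$ as $z\to z_0$, pointing into the bulk, where the hyperplanes $L_j$ ($j\notin J^{\van}$) truncate it.

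Next, I will show that every other bounded chamber of $\cA_z$ is disjoint from $\overline{\De^{\van}}$. Since these chambers do not have $P$ as a vertex, I only need rule out their meeting $\overline{\De^{\van}}\setminus\{P\}$, which is the union of $k$ unbounded $(k-1)$-dimensional cone-faces---each lying on some $L_{j_i}$ and extending from $P$ outward to infinity in direction $v$. For $z$ sufficiently close to $z_0$, the large distance $\|P\|$ ensures that these cone-faces are disjoint from every hyperplane $L_j$ with $j\notin J^{\van}$, so each cone-face borders a single chamber of $\cA_z$ on its opposite side---namely the ``adjacent orthant'' at $P$---which is unbounded (it extends outward from $P$ in directions close to $v$, evading the bulk). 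Hence no bounded chamber of $\cA_z$ other than $\De^{\mathrm{opp}}$ meets $\overline{\De^{\van}}$, and the asserted cardinality follows.

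The principal obstacle will be justifying rigorously that, for $z$ sufficiently close to $z_0$, the outward-pointing cone-faces of $\De^{\van}$ lie beyond the bulk arrangement $\bigcup_{j\notin J^{\van}}(L_j=0)$. This should follow from the asymptotic $L_j(P)\sim L_j(v)\|P\|$ as $\|P\|\to\infty$ (for $j\notin J^{\van}$), which forces each $L_j$ to take a definite sign along each cone-face once $\|P\|$ is large enough; a closely related task is to confirm that among the $2^k$ orthants at $P$, only $\De^{\mathrm{opp}}$ gives rise to a bounded chamber, which the same asymptotic estimate should yield by ruling out bulk-intersections along a ray close to $v$ in every orthant distinct from $\De^{\mathrm{opp}}$.
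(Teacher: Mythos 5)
Your overall strategy coincides with the paper's: reduce, via $|\bC(\cA_z)|=\binom{k+n}{k}$, to showing that exactly one bounded chamber meets $\overline{\De^{\van}}$; identify that chamber as the one in the orthant opposite to $\De^{\van}$ at $P=P(J^{\van})$; and eliminate all other candidates by noting that every positive-dimensional face of $\De^{\van}$ is an unbounded cone emanating from $P$, so a bounded chamber can meet $\overline{\De^{\van}}$ only in the single vertex $P$. Your second paragraph is essentially the paper's argument for that last point.

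The genuine gap is the boundedness of $\De^{\mathrm{opp}}$ (the paper's $D'$). Saying that the hyperplanes $L_j$ ($j\notin J^{\van}$) ``truncate'' it does not give boundedness: a simplicial cone cut by finitely many hyperplanes can remain unbounded. And your counting route via $|\bC(\cA_{z_0})|=\binom{k+n}{k}-1$ presupposes a boundedness-preserving correspondence between the chambers of $\cA_z$ away from $P$ and those of $\cA_{z_0}$; that correspondence is exactly (\ref{eq:bC-z0}) in the paper, which is deduced \emph{from} the present lemma, so invoking it here is circular unless you establish it independently (which is no easier than the lemma itself). The paper closes this step with a short direct argument that you should substitute for both the ``truncation'' claim and the asymptotic estimates you flag as the principal obstacle: fix any $p\notin J^{\van}$; since $(L_p=0)\cap\De^{\van}=\emptyset$, each vertex $P(\under{j_l}{J^{\van}}{p})$ ($l=1,\dots,k$) lies in $\overline{\De^{\van}_{\mathrm{op}}}$, and hence $D'$ is contained in the convex hull of the $k+1$ points $P(J^{\van}),P(\under{j_1}{J^{\van}}{p}),\dots,P(\under{j_k}{J^{\van}}{p})$, which is a bounded $k$-simplex. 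With that replacement your outline matches the paper's proof.
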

\begin{proof}
  As the cardinality of $\bC (\cA_z)$ is 
  $\binom{k+n}{k}$,  
  it is sufficient to show that 
  there exists a unique chamber in $\bC (\cA_z) -(\De^{\van})^{\perp}$. 
  We consider the chambers $D \in \cC(\cA_z)$ that satisfy 
  $\overline{\De^{\van}}\cap \overline{D} \neq \emptyset$. 
  Note that this intersection is a (non-trivial) face of $\De^{\van}$. 
  If $1\leq d \leq k-1$, any $d$-face of $\De^{\van}$ includes
  a half-line with the initial point $P(J^{\van})$, 
  and hence it is unbounded. 
  Thus, the intersection 
  $\overline{\De^{\van}}\cap \overline{D}$ is unbounded if its dimension 
  is greater than or equal to $1$. 
  Therefore, we obtain the unique chamber $D'\in \cC(\cA_z)$ 
  such that $\overline{\De^{\van}}\cap \overline{D'}$
  is the (unique) vertex $P(J^{\van})$ of $\De^{\van}$. 
  When we regard $\De^{\van}$ as a cone, 
  its opposite $\De^{\van}_{\textrm{op}}$ contains the chamber $D'$. 
  We prove that $D'$ is bounded. 
  We take any $p\not\in J^{\van}$, and consider  
  $\under{j_l}{J^{\van}}{p}$ for $l=1,2,\dots,k$. 
  As $(L_p=0)\cap \De^{\van} =\emptyset$, 
  each $P(\under{j_l}{J^{\van}}{p})$ lies on $\overline{\De^{\van}_{\textrm{op}}}$. 
  Therefore, $D'$ is included in the convex hull of the $k+1$ points
  \begin{align*}
    P(J^{\van}), \quad P(\under{j_l}{J^{\van}}{p}) \ (l=1,2,\dots,k) ,
  \end{align*}
  which is a bounded $k$-simplex. 
  Therefore, we have $D'\in \bC(\cA_x)$, which completes the proof. 
\end{proof}

We denote the twisted cycle defined as $\De^{\van}$ loading $u_z(t)^{\pm 1}$
by $\De^{\van}_{\pm} \in H_k (T_z ,u_z^{\pm 1})$. 
We call these \textit{vanishing cycles}. 
Let $(\De^{\van})^{\perp}=\{ \De^1 ,\dots ,\De^{r-1} \}$ with $r=\binom{k+n}{k}$. 
We also set $\De^j_{\pm} =\De^j \ot u_z^{\pm 1} \in H_k (T_z ,u_z^{\pm 1})$. 
\begin{lemma}
  \label{lem:van-perp}
  Under the assumption in (\ref{eq:condition-lambda}), 
  $\{ \De^1_{\pm},\dots, \De^{r-1}_{\pm} ,\De^{\van}_{\pm} \}$
  form bases of $H_k (T_z , u_z^{\pm 1})$. 
\end{lemma}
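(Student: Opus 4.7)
The plan is to use the intersection form $\cI^h$ to rule out any nontrivial linear relation among $\{\De^1_{\pm}, \dots, \De^{r-1}_{\pm}, \De^{\van}_{\pm}\}$, where $r=\binom{k+n}{k}$. Since $\dim H_k(T_z, u_z^{\pm 1}) = r$ by Fact \ref{fact-homology-chamber}, once linear independence is verified the set automatically forms a basis. The strategy is to pair any purported relation against $\De^{\van}_{\mp}$, which selectively detects the coefficient of $\De^{\van}_{\pm}$.

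First, for every $\De^j \in (\De^{\van})^{\perp}$ ($j = 1, \dots, r-1$), the defining property $\overline{\De^j} \cap \overline{\De^{\van}} = \emptyset$ means that $\De^j$ and $\De^{\van}$ have geometrically disjoint supports, so after a sufficiently small regularization the loaded chambers remain disjoint. The explicit chamber-chamber formulas of \cite{KY} then yield $\cI^h(\De^j_{\pm}, \De^{\van}_{\mp}) = 0$ for each such $j$.

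Second, I will show the self-intersection $\cI^h(\De^{\van}_+, \De^{\van}_-)$ is nonzero. Viewed projectively, the closure of $\De^{\van}$ in $\P^k$ is a $k$-simplex whose $k+1$ bounding hyperplanes are exactly $\{L_p \mid p \in J^{\van}\} = \{L_0, L_{j_1}, \dots, L_{j_k}\}$ (the hyperplane $(L_0=0)$ at infinity completing the unbounded faces from the affine picture). Applying the chamber self-intersection formula of \cite{KY} to this $k$-simplex yields an expression of the form
\begin{align*}
  \cI^h(\De^{\van}_+, \De^{\van}_-) = \pm\,\frac{1 - \prod_{p \in J^{\van}} \l_p}{\prod_{p \in J^{\van}} (1 - \l_p)}.
\end{align*}
Under (\ref{eq:condition-lambda}), the numerator and each factor of the denominator are nonzero, so this self-intersection does not vanish.

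Combining these two facts, a relation $c^{\van} \De^{\van}_{\pm} + \sum_{j=1}^{r-1} c_j \De^j_{\pm} = 0$ paired against $\De^{\van}_{\mp}$ collapses to $c^{\van} \cdot \cI^h(\De^{\van}_{\pm}, \De^{\van}_{\mp}) = 0$, forcing $c^{\van} = 0$; the residual relation then involves only $r - 1$ elements of the basis $\{\De^1_{\pm}, \dots, \De^{r-1}_{\pm}, D'_{\pm}\}$ supplied by Fact \ref{fact-homology-chamber}, so every $c_j$ vanishes. The main obstacle will be executing the self-intersection computation in the second step, since $\De^{\van}$ is unbounded and \cite{KY}'s formula must be applied to its projective closure, tracking both the apex $P(J^{\van})$ (where only $L_{j_1}, \dots, L_{j_k}$ meet in the affine chart) and the $k$ vertices lying on $(L_0 = 0)$. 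Verifying that the combined corner contributions assemble into precisely the factor $1 - \prod_{p \in J^{\van}} \l_p$ in the numerator, whose non-vanishing is exactly the extra hypothesis in (\ref{eq:condition-lambda}), is the only delicate point.
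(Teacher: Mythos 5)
Your proposal is correct and follows essentially the same route as the paper: establish $\cI^h(\De^{j}_{\pm},\De^{\van}_{\mp})=0$ from the disjointness of closures, establish $\cI^h(\De^{\van}_{+},\De^{\van}_{-})=\frac{1-\prod_{p\in J^{\van}}\l_p}{\prod_{p\in J^{\van}}(1-\l_p)}\neq 0$ from the Kita--Yoshida formula together with condition (\ref{eq:condition-lambda}), pair any relation against $\De^{\van}_{\mp}$ to kill its coefficient, and finish with Fact \ref{fact-homology-chamber} and the dimension count. The only cosmetic difference is that you spell out the projective-simplex computation of the self-intersection number, which the paper simply cites from \cite{KY}.
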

\begin{proof}
  We show the claim for only the ``$+$'' case
  (that for ``$-$'' can be proved in a similar manner). 
  By Fact \ref{fact-homology-chamber}, 
  the $\{ \De^1_{+},\dots, \De^{r-1}_{+} \}$ are linearly independent. 
  We have 
  \begin{align}
    \label{eq:self-intersection-van-homology}
    &\cI^h (\De^{\van}_{+},\De^{\van}_{-})
      \!=\! (-1)^k \frac{\prod_{j\in J^{\van}} \l_j -1}{\prod_{j\in J^{\van}} (\l_j -1)}
      \!=\! \frac{1- \prod_{j\in J^{\van}} \l_j }{\prod_{j\in J^{\van}} (1- \l_j)}\neq 0 ,\\
    \label{eq:perp-homology}
    &\cI^h (\De^{i}_{+},\De^{\van}_{-}) =0 \quad 
      (i=1,\dots ,r-1) 
  \end{align}
  by \cite[II--p.177]{KY}, (\ref{eq:condition-lambda}), and 
  the definition of $(\De^{\van})^{\perp}$. 
  These imply that 
  the $\{ \De^1_{\pm},\dots, \De^{r-1}_{\pm} ,\De^{\van}_{\pm} \}$ are linearly independent. 
\end{proof}

Any chamber $\De \in \cC(\cA_{z})$ is expressed by 
a system of linear inequalities of the form $L_j (t;z) \gtrless 0$. 
By replacing $z$ with $z_0$ in these inequalities, 
we obtain a chamber $\De_0 \in \cC(\cA_{z_0})$. 
Note that $\De^{\van}$ becomes the empty set. 
As $z$ is sufficiently close to $z_0$, 
a chamber $\De \in \cC(\cA_{z})$ whose closure does not contain $P(J^{\van})$ 
hardly changes. 
Then, by the proof of Lemma \ref{lem:van-perp}, 
we have
\begin{align}
  \label{eq:bC-z0}
  \bC (\cA_{z_0}) 
  =\{ \De_0 \mid \De \in (\De^{\van})^{\perp}\}
  =\{ \De^1_0 ,\dots ,\De^{r-1}_0 \} .
\end{align}
Note that $D'_0$ is an unbounded chamber in $T_{z_0}\cap \R^k$. 

\begin{definition}[Limit of a loaded chamber]
  \label{def:limit-homology}
  Let $\De \in \cC(\cA_{z})$ be a chamber that is not $\De^{\van}$. 
  We define the \textit{limits} of 
  loaded chambers $\De \ot u_z^{\pm 1}$ as 
  \begin{align*}
    \lim_{z\to z_0} (\De \ot u_z^{\pm 1}) =\De_0 \ot u_{z_0}^{\pm 1} 
    \in H_k(T_{z_0},u_{z_0}^{\pm 1}),
  \end{align*}
  where the branches of $u_{z_0}(t)^{\pm 1}$ on $\De_0$ are 
  naturally determined by $u_{z_0}(t)=\lim_{z\to z_0}u_z(t)$. 
  For the vanishing cycles $\De^{\van}_{\pm} =\De^{\van} \ot u_{z}^{\pm 1}$, 
  we define $\lim_{z\to z_0} \De^{\van}_{\pm} =0 \in H_k(T_{z_0},u_{z_0}^{\pm 1})$. 
\end{definition}
\begin{lemma}
  \label{lem:intersection-preserve}
  If $\De,\De'\in \cC (\cA_z)$ satisfy 
  $\overline{\De} \cap \overline{\De'} \cap \overline{\De^{\van}}=\emptyset$, then 
  \begin{align*}
    \cI^h_0 \left( \lim_{z\to z_0} (\De \ot u_z) ,
      \lim_{z\to z_0} (\De' \ot u_z^{-1}) \right)
    =\cI^h \left( \De \ot u_z , \De' \ot u_z^{-1} \right) .
  \end{align*}
\end{lemma}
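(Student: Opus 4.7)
The plan is to exploit the local nature of the Kita--Yoshida intersection formula for loaded chambers on a real arrangement. Recall from \cite{KY} that the pairing of two loaded chambers in a generic real arrangement is a finite sum indexed by the common vertices $v \in \overline{\De} \cap \overline{\De'}$ (and equals zero if this intersection is empty), each summand depending only on the $\l_j$ for those hyperplanes $(L_j = 0)$ meeting at $v$ and on the branches of $u_z^{\pm 1}$ chosen on $\De$ and $\De'$. The hypothesis $\overline{\De} \cap \overline{\De'} \cap \overline{\De^{\van}} = \emptyset$ ensures that no contributing vertex lies on the simplex $\overline{\De^{\van}}$ that collapses as $z \to z_0$; in particular, no such vertex coincides with the degenerating point that the $(L_j = 0)$, $j \in J^{\van}$, meet in $\cA_{z_0}$.

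First I would show that each common vertex $v$ of $\overline{\De} \cap \overline{\De'}$ is cut out by some $(k+1)$-tuple $J_v \in \cJ$ with $J_v \neq J^{\van}$: if it were $J^{\van}$, then $v = P(J^{\van})$, which lies in $\overline{\De^{\van}}$, contradicting the hypothesis. Thus $|z_0\la J_v\ra| \neq 0$, so $v$ deforms continuously to a vertex $v_0$ of $\overline{\De_0} \cap \overline{\De'_0}$ in $\cA_{z_0}$, and the same set of hyperplanes meets transversally at $v_0$. Moreover, by Definition \ref{def:limit-homology} the branches of $u_{z_0}^{\pm 1}$ on $\De_0$ and $\De'_0$ are the pointwise limits of those on $\De$ and $\De'$, so all branch-dependent sign and phase factors in the local formula agree before and after the limit.

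Second, I would argue that although $\cI^h_0$ is in general evaluated by blowing up $\P^k$ at $P(J^{\van})$ to resolve $\cA_{z_0}$ to a normal crossings configuration, this blow-up is supported in an arbitrarily small neighborhood of $P(J^{\van})$, hence disjoint from $\overline{\De_0} \cap \overline{\De'_0}$. Consequently the local contribution at each $v_0$ is computed by the standard generic Kita--Yoshida formula, with exactly the same $\l_j$-factors as for $v$; summing gives the stated equality. (When $\overline{\De} \cap \overline{\De'}$ is already empty, the corresponding intersection of limits is also empty after a sufficiently small perturbation, so both sides vanish.)

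The main obstacle I expect is the careful bookkeeping of branches and orientations: one must verify that the deformation $v \rightsquigarrow v_0$, combined with the branch convention in Definition \ref{def:limit-homology}, reproduces exactly the local data (signs, orientations of the chambers at the vertex, and $\l_j$ factors) used by \cite{KY}, and that no additional vertex of $\overline{\De_0} \cap \overline{\De'_0}$ appears in the limit beyond those inherited from $\overline{\De} \cap \overline{\De'}$. Both are essentially locality statements, but require a cross-check between the conventions of \cite{KY} and the limit procedure adopted here.
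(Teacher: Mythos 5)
Your proposal is correct and follows essentially the same route as the paper: the Kita--Yoshida intersection number is determined locally by the topological intersection $\overline{\De}\cap\overline{\De'}$ and the branches of $u_z^{\pm 1}$, the hypothesis keeps this intersection away from the degenerating point $P(J^{\van})$ (and from the locus affected by the blow-up), and the branches vary continuously under the limit. The only minor imprecision is that the local contributions are indexed by all common faces of $\overline{\De}\cap\overline{\De'}$, not just vertices, but this does not affect the locality argument.
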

\begin{proof}
  According to \cite{KY}, the intersection numbers between loaded chambers are 
  determined by information about the topological intersection and 
  differences among branches of $u_z$. 

  By the assumption, $\overline{\De} \cap \overline{\De'}$ does not 
  contain $P(J^{\van})$. 
  Thus, the topological intersection of $\overline{\De}$ and $\overline{\De'}$
  (more precisely, that of the support of $\textrm{reg}(\De)$ and $\De'$) 
  hardly changes. 
  As the branches of $u_z^{\pm 1}$ change continuously, 
  we obtain the lemma. 
\end{proof}

\begin{definition}
  As $\{ \De^1_{\pm},\dots, \De^{r-1}_{\pm} ,\De^{\van}_{\pm} \}$ 
  form bases of $H_k(T_z,u_z^{\pm 1})$, we can obtain $\C(\l)$-linear maps 
  $\fl^h_{\pm} : H_k(T_z,u_z^{\pm 1}) \to H_k(T_{z_0},u_{z_0}^{\pm 1})$ 
  by defining the images of the bases as their limits. 
\end{definition}

By the following proposition, we can interpret 
$\fl^h_{\pm}$ as giving the limits for loaded chambers. 
\begin{proposition}
  \label{prop-well-def-homology}
  For any $\De \in \cC (\cA_z)$, we have 
  \begin{align*}
    \fl^h_{\pm} \left( \De \ot u_z^{\pm 1} \right) = \lim_{z\to z_0} (\De \ot u_z^{\pm 1}) .
  \end{align*}
\end{proposition}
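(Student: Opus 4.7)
The plan is to reduce the statement to linear algebra on the basis of Lemma~\ref{lem:van-perp} and then verify the resulting identity by pairing against the basis of the ``dual'' homology group. I focus on the ``$+$'' case; the other is entirely analogous. Given $\De\in\cC(\cA_z)$, I first expand
\[
\De\ot u_z=\sum_{i=1}^{r-1}c_i\,\De^i_{+}+c_{\van}\,\De^{\van}_{+}
\]
in that basis, with $c_i,c_{\van}\in\C(\l)$. By the definition of $\fl^h_{+}$ this gives $\fl^h_{+}(\De\ot u_z)=\sum_{i=1}^{r-1}c_i\,\De^i_{0,+}$, since $\fl^h_{+}(\De^{\van}_{+})=0$. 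Invoking Definition~\ref{def:limit-homology}, the proposition reduces to the equality
\[
\sum_{i=1}^{r-1}c_i\,\De^i_{0,+}=\De_0\ot u_{z_0}\quad\text{in}\ H_k(T_{z_0},u_{z_0}),
\]
with the right-hand side being $0$ when $\De=\De^{\van}$.

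To check this equality I pair both sides against each element of $\{\De^j_{0,-}\mid 1\le j\le r-1\}$, which forms a basis of $H_k(T_{z_0},u_{z_0}^{-1})$ by (\ref{eq:bC-z0}) and Fact~\ref{fact-homology-chamber}. The crucial observation is that each $\De^j\in(\De^{\van})^{\perp}$ has $\overline{\De^j}\cap\overline{\De^{\van}}=\emptyset$, so the hypothesis of Lemma~\ref{lem:intersection-preserve} is satisfied both for the pair $(\De,\De^j)$ and for each pair $(\De^i,\De^j)$. Applying that lemma on both sides, the problem reduces to
\[
\sum_{i=1}^{r-1}c_i\,\cI^h(\De^i_{+},\De^j_{-})=\cI^h(\De\ot u_z,\De^j_{-})\quad(1\le j\le r-1),
\]
and expanding the right-hand side via the basis expression, the desired identity becomes $\cI^h(\De^{\van}_{+},\De^j_{-})=0$ for every $j$. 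This vanishing follows from the Kita--Yoshida rule \cite{KY}: two loaded chambers whose closures are disjoint have intersection number zero. The non-degeneracy of $\cI^h_0$ then closes the argument.

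The main subtlety is precisely this last vanishing, since $\De^{\van}$ is unbounded and one has to confirm that the Kita--Yoshida recipe still produces zero in this situation; but the argument is the same as the one used to establish (\ref{eq:perp-homology}) in Lemma~\ref{lem:van-perp}, simply with the roles of the two factors of the pairing swapped, so it is already at our disposal. Apart from this, the proof is a short bookkeeping check: the basis expansion concentrates all the degenerating information into the single coefficient $c_{\van}$, which is annihilated both by $\fl^h_{+}$ and by pairing with any $\De^j_{0,-}$, so nothing is lost in the passage $z\to z_0$.
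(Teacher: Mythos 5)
Your proposal is correct and follows essentially the same route as the paper: expand $\De\ot u_z$ in the basis $\{\De^1_{\pm},\dots,\De^{r-1}_{\pm},\De^{\van}_{\pm}\}$, kill the vanishing-cycle coefficient, and identify the remaining coefficients by pairing against the limits of the $\De^j_{-}$ via Lemma~\ref{lem:intersection-preserve} together with the vanishing $\cI^h(\De^{\van}_{+},\De^j_{-})=0$. The only cosmetic difference is that the paper packages the final step as invertibility of the block $H'$ of the intersection matrix $H$ (deduced from non-degeneracy of $\cI^h$ on $T_z$), whereas you invoke non-degeneracy of $\cI^h_0$ on $T_{z_0}$ directly; the two are equivalent here since $H'$ is exactly the matrix of $\cI^h_0$ in the limit bases.
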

\begin{proof}
  Before proving the proposition, 
  we consider the intersection matrix with respect to the bases 
  $\{ \De^1_{\pm},\dots, \De^{r-1}_{\pm} ,\De^{\van}_{\pm} \}$. 
  We set $\De^r_{\pm} = \De^{\van}_{\pm}$ and 
  consider the intersection matrix $H=(\cI^h (\De^i_{+},\De^j_{-}))_{i,j=1,\dots ,r}$. 
  By (\ref{eq:perp-homology}), $H$ can be written as 
  \begin{align*}
    H=
    \begin{pmatrix}
      &&& 0 \\
      &H'&&\vdots \\
      &&& 0 \\
      0&\cdots &0 & \cI^h (\De^{\van}_{+},\De^{\van}_{-})
    \end{pmatrix},
  \end{align*}
  where $H'$ is a square matrix of size $r-1$. 
  As $\cI^h$ is non-degenerate and 
  $\{ \De^1_{+},\dots, \De^{r}_{+}\}$ is a basis, $H$ is invertible. 
  Thus, $H'$ is also invertible by (\ref{eq:self-intersection-van-homology}). 
  Note that $H'$ can be written as 
  $H'=(\cI^h_0 (\fl^h_{+}(\De^i_{+}),\fl^h_{-}(\De^j_{-})))_{i,j=1,\dots ,r-1}$ 
  using Lemma \ref{lem:intersection-preserve}. 

  We now prove the proposition for $\fl^h_{+}$ 
  (the claim for $\fl^h_{-}$ is proved in the same manner). 
  Using the basis $\{ \De^1_{+},\dots, \De^{r-1}_{+} ,\De^{\van}_{+} \}$, 
  we can write $\De \ot u_z$ as 
  $\De \ot u_z =\sum_{i=1}^{r-1}c_i \De^i_{+} +c_r \De^{\van}_{+}$ with $c_i\in \C(\l)$. 
  It suffices to show that 
  \begin{align*}
    \lim_{z\to z_0} (\De \ot u_z ) =\sum_{i=1}^{r-1}c_i \fl^h_{+}(\De^i_{+}) .
  \end{align*}
  On the other hand, 
  because $\{ \fl^h_{+}(\De^1_{+}),\dots, \fl^h_{+}(\De^{r-1}_{+}) \}$ forms a basis of 
  $H_k(T_{z_0},u_{z_0})$ by Fact \ref{fact-homology-chamber} and (\ref{eq:bC-z0}),  
  $\lim_{z\to z_0} (\De \ot u_z )$ can be expressed as 
  \begin{align*}
    \lim_{z\to z_0} (\De \ot u_z ) =\sum_{i=1}^{r-1}c'_i \fl^h_{+}(\De^i_{+}) 
  \end{align*}
  for some $c'_i \in \C(\l)$. 
  Our claim is reduced to $c_i =c'_i$ ($i=1,\dots ,r-1$). 
  Because of  
  $\overline{\De} \cap \overline{\De^{i}} \cap \overline{\De^{\van}}=\emptyset$, 
  we have 
  \begin{align*}
    \cI^h_0 \left( \lim_{z\to z_0} (\De \ot u_z) , \fl^h_{-}(\De^j_{-}) \right)
    =\cI^h \left( \De \ot u_z , \De^j_{-} \right) .
  \end{align*}
  Therefore, we obtain 
  \begin{align*}
    (c_1,\dots ,c_{r-1}) H' 
    &=\Big( \cI^h (\De \ot u_z,\De^1_{-}) ,\dots , \cI^h (\De \ot u_z,\De^{r-1}_{-}) \Big) \\
    &=\Big( \cI^h_0 \big( \lim_{z\to z_0} (\De \ot u_z) , \fl^h_{-}(\De^1_{-})\big) ,\dots , 
    \cI^h_0 \big( \lim_{z\to z_0} (\De \ot u_z) , \fl^h_{-}(\De^{r-1}_{-})\big) \Big) \\
    &=(c'_1,\dots ,c'_{r-1}) H' . 
  \end{align*}
  The invertibility of $H'$ proves the claim. 
\end{proof}
We define subspaces of $H_k(T_{z},u_z^{\pm 1})$ as 
\begin{align*}
  \la \De^{\van}_{\pm}\ra &=\C(\l)\cdot \De^{\van}_{\pm} 
  \subset H_k(T_{z},u_z^{\pm 1}), \\
  \la \De^{\van}_{-}\ra^{\perp} &= \{ \si \in H_k(T_{z},u_z) \mid 
  \cI^h (\si ,\De^{\van}_{-})=0 \} , \\
  \la \De^{\van}_{+}\ra^{\perp} &= \{ \si \in H_k(T_{z},u_z^{-1}) \mid 
  \cI^h (\De^{\van}_{+} ,\si)=0 \} .
\end{align*}
According to the proof of Lemma \ref{lem:van-perp}, we have
\begin{align*}
  \{ \De^1_{\pm},\dots, \De^{r-1}_{\pm}  \} \subset \la \De^{\van}_{\mp}\ra^{\perp} ,\qquad 
  \De^{\van}_{\pm} \not\in \la \De^{\van}_{\mp}\ra^{\perp} ,
\end{align*}
and hence the dimensions of $\la \De^{\van}_{\mp}\ra^{\perp}$ are $r-1=\binom{k+n}{k}-1$. 
Therefore, we obtain the direct sum decompositions 
\begin{align}
  \label{eq:d-sum-homology}
  H_k(T_{z},u_z^{\pm 1}) =\la \De^{\van}_{\pm}\ra \oplus \la \De^{\van}_{\mp}\ra^{\perp} 
\end{align}
and the projections onto the second components
\begin{align}
  \label{eq:2nd-proj-homology-1}
  \fp^h_{+} : H_k(T_{z},u_z) \to \la \De^{\van}_{-}\ra^{\perp} ;\quad &
  \si \mapsto \si -\frac{\cI^h(\si,\De^{\van}_{-})}
  {\cI^h(\De^{\van}_{+},\De^{\van}_{-})}\De^{\van}_{+} ,\\
  \nonumber
  \fp^h_{-} : H_k(T_{z},u_z^{- 1}) \to \la \De^{\van}_{+}\ra^{\perp} ;\quad &
  \si \mapsto \si -\frac{\cI^h(\De^{\van}_{+},\si)}
  {\cI^h(\De^{\van}_{+},\De^{\van}_{-})}\De^{\van}_{-} .
\end{align}
By the definitions of $\fl^h_{\pm}$, the images of $\De^{\van}_{\pm}$ 
under $\fl^h_{\pm}$ are zero. Hence, we obtain the following commutative diagrams. 
\begin{align}
  \label{diagram-homology}
  \xymatrix{
    &H_k(T_{z},u_z^{\pm 1}) \ar[dl]_{\fp^h_{\pm}}\ar[dr]^{\fl^h_{\pm}} & \\
    \la \De^{\van}_{\mp}\ra^{\perp} \ar[rr]_{\fl^h_{\pm}|_{\la \De^{\van}_{\mp}\ra^{\perp} }} 
    && H_k(T_{z_0},u_{z_0}^{\pm 1})
  }
\end{align}
\begin{theorem}\label{th-homology-iso}
  The $\C(\l)$-linear maps
  \begin{align*}
    \fl^h_{\pm}|_{\la \De^{\van}_{\mp}\ra^{\perp}} :
    \la \De^{\van}_{\mp}\ra^{\perp} \to H_k(T_{z_0},u_{z_0}^{\pm 1})
  \end{align*}
  are isomorphisms that preserve the intersection form, that is, 
  \begin{align*}
    \cI^h_0 (\fl^h_{+}(\si),\fl^h_{-}(\tau))
    =\cI^h (\si, \tau)    
  \end{align*}
  holds for any $\si \in \la \De^{\van}_{-}\ra^{\perp}$, 
  $\tau \in \la \De^{\van}_{+}\ra^{\perp}$.
\end{theorem}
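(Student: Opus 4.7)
The plan is to follow the template set by Theorem \ref{th-cohomology-iso}, this time using the chamber bases and Lemma \ref{lem:intersection-preserve} in place of the logarithmic-form bases and the blow-up argument. The two claims, bijectivity and intersection-preservation, can be handled independently and then assembled.

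For bijectivity, I would argue as follows. By (\ref{eq:perp-homology}) each loaded chamber $\De^i_\pm$ ($i=1,\dots,r-1$) lies in $\la\De^{\van}_\mp\ra^\perp$, and Lemma \ref{lem:van-perp} shows that they are linearly independent; since the direct sum decomposition (\ref{eq:d-sum-homology}) gives $\dim\la\De^{\van}_\mp\ra^\perp=r-1$, these vectors form a basis of $\la\De^{\van}_\mp\ra^\perp$. By Proposition \ref{prop-well-def-homology}, their images under $\fl^h_\pm$ are the loaded chambers $\De^i_0\ot u_{z_0}^{\pm 1}$, and (\ref{eq:bC-z0}) identifies this family with the full set of loaded bounded chambers of $\cA_{z_0}$. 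Fact \ref{fact-homology-chamber} then guarantees that this image set is a basis of $H_k(T_{z_0},u_{z_0}^{\pm 1})$, so $\fl^h_\pm|_{\la\De^{\van}_\mp\ra^\perp}$ is an isomorphism.

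For intersection-preservation, by $\C(\l)$-bilinearity of $\cI^h$ and $\cI^h_0$ it suffices to verify
\[
\cI^h_0\bigl(\fl^h_+(\De^i_+),\fl^h_-(\De^j_-)\bigr) =\cI^h(\De^i_+,\De^j_-),\quad i,j=1,\dots,r-1.
\]
The key observation is that $\De^i,\De^j\in(\De^{\van})^\perp$ means $\overline{\De^i}\cap\overline{\De^{\van}}=\emptyset$ and $\overline{\De^j}\cap\overline{\De^{\van}}=\emptyset$ individually, and in particular the triple intersection $\overline{\De^i}\cap\overline{\De^j}\cap\overline{\De^{\van}}$ is empty. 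Lemma \ref{lem:intersection-preserve} applies directly to yield the required equality. Note that this computation has already been packaged into the matrix $H'$ in the proof of Proposition \ref{prop-well-def-homology}, where both sides of the identity were shown to coincide; one may simply cite that observation.

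The routine portion is bookkeeping, so I do not anticipate a serious obstacle beyond carefully checking the hypotheses of Lemma \ref{lem:intersection-preserve}. The one subtlety to flag is that Lemma \ref{lem:van-perp} and the geometric description of $(\De^{\van})^\perp$ were formulated under the standing assumption $0=j_0\in J^{\van}$. To handle a general vanishing set $J^{\van}$, one invokes a projective change of coordinates on $\P^k$ sending one of the hyperplanes in $J^{\van}$ to the hyperplane at infinity, reducing to the normalized situation exactly as was done in the cohomology argument. Under this reduction all constructions, including $\fl^h_\pm$, $\fp^h_\pm$, and the intersection pairings, are equivariant, so the isomorphism and intersection-preservation statements transfer intact.
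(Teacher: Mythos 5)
Your proposal is correct and follows essentially the same route as the paper: the paper's proof likewise observes that $\{\De^1_{\pm},\dots,\De^{r-1}_{\pm}\}$ and their images under $\fl^h_{\pm}$ form bases of $\la\De^{\van}_{\mp}\ra^{\perp}$ and $H_k(T_{z_0},u_{z_0}^{\pm 1})$ respectively, and then invokes Lemma \ref{lem:intersection-preserve} on these basis elements. Your extra remark about reducing the case $0\not\in J^{\van}$ by a projective transformation is handled by the paper separately in Corollary \ref{cor-intesection-homology-2}, since the theorem itself is stated under the standing assumption $0=j_0\in J^{\van}$.
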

\begin{proof}
  By the above argument, 
  $\{ \De^1_{\pm},\dots, \De^{r-1}_{\pm}  \}$ and 
  $\{ \fl^h_{\pm}(\De^1_{\pm}),\dots, \fl^h_{\pm}(\De^{r-1}_{\pm}) \}$
  form bases of 
  $\la \De^{\van}_{\mp}\ra^{\perp}$ and $H_k(T_{z_0},u_{z_0}^{\pm 1})$, respectively. 
  Along with Lemma \ref{lem:intersection-preserve}, this implies the result of the theorem. 
\end{proof}
Similar to Corollary \ref{cor-intesection-cohomology}, 
we obtain the following formula. 
\begin{cor}\label{cor-intesection-homology}
  If $\si \in H_k(T_{z_0},u_{z_0})$ and 
  $\tau \in H_k(T_{z_0},u_{z_0}^{- 1})$ 
  are expressed as 
  $\si =\fl^c_{+}(\si')$ and $\tau =\fl^c_{-}(\tau')$ with 
  $\si' \in H_k(T_{z},u_z)$, $\tau' \in H_k(T_{z},u_z^{- 1})$, 
  then the intersection number $\cI^h_0 (\si, \tau)$ is 
  expressed by those on $H_k(T_{z},u_z^{\pm 1})$: 
  \begin{align*}
    \cI^h_0 (\si, \tau) =\cI^h (\si' ,\tau')
    -\frac{\cI^h(\si', \De^{\van}_{-}) \cdot \cI^h(\De^{\van}_{+},\tau')}
    {\cI^h(\De^{\van}_{+},\De^{\van}_{-})} .
  \end{align*}
\end{cor}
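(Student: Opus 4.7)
The plan is to follow the argument of Corollary \ref{cor-intesection-cohomology} verbatim, substituting the homological objects for their cohomological counterparts. All substantive work has already been done: Theorem \ref{th-homology-iso} gives an isometry from $\la \De^{\van}_{\mp}\ra^{\perp}$ onto $H_k(T_{z_0},u_{z_0}^{\pm 1})$, and the diagram (\ref{diagram-homology}) says that $\fl^h_{\pm}$ factors through $\fp^h_{\pm}$ when restricted to $\la \De^{\van}_{\mp}\ra^{\perp}$. So the corollary should reduce to a purely formal bilinear computation.

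Concretely, I would first use the hypotheses $\si =\fl^h_{+}(\si')$ and $\tau =\fl^h_{-}(\tau')$ together with the commutativity of (\ref{diagram-homology}) to rewrite
\begin{align*}
  \cI^h_0(\si,\tau)
  =\cI^h_0\bigl(\fl^h_{+}(\fp^h_{+}(\si')),\,\fl^h_{-}(\fp^h_{-}(\tau'))\bigr).
\end{align*}
Next, because $\fp^h_{\pm}(\si')\in \la \De^{\van}_{\mp}\ra^{\perp}$ and $\fp^h_{\mp}(\tau')\in \la \De^{\van}_{\pm}\ra^{\perp}$, Theorem \ref{th-homology-iso} applies and yields
\begin{align*}
  \cI^h_0(\si,\tau)
  =\cI^h\bigl(\fp^h_{+}(\si'),\,\fp^h_{-}(\tau')\bigr).
\end{align*}

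Finally, I would plug in the explicit formulas for $\fp^h_{\pm}$ from (\ref{eq:2nd-proj-homology-1}) and expand using bilinearity of $\cI^h$. Writing $c_1 = \cI^h(\si',\De^{\van}_{-})/\cI^h(\De^{\van}_{+},\De^{\van}_{-})$ and $c_2 = \cI^h(\De^{\van}_{+},\tau')/\cI^h(\De^{\van}_{+},\De^{\van}_{-})$, we get
\begin{align*}
  \cI^h(\si' - c_1 \De^{\van}_{+},\,\tau' - c_2 \De^{\van}_{-})
  = \cI^h(\si',\tau') - c_2 \cI^h(\si',\De^{\van}_{-}) - c_1 \cI^h(\De^{\van}_{+},\tau') + c_1 c_2 \cI^h(\De^{\van}_{+},\De^{\van}_{-}).
\end{align*}
The last three terms collapse to a single term $-\cI^h(\si',\De^{\van}_{-})\cdot\cI^h(\De^{\van}_{+},\tau')/\cI^h(\De^{\van}_{+},\De^{\van}_{-})$, which produces exactly the claimed expression.

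There is essentially no obstacle here: the heavy lifting is already contained in Theorem \ref{th-homology-iso} (hence in Lemma \ref{lem:intersection-preserve} and the proof of Proposition \ref{prop-well-def-homology}), and what remains is a two-line bilinear expansion. If anything, the only subtle point to watch is the asymmetry between the left and right slots of $\cI^h$ in the definitions of $\fp^h_{+}$ and $\fp^h_{-}$, which must be respected when expanding; keeping the order of arguments fixed throughout the computation avoids sign errors and reproduces the asymmetric numerator $\cI^h(\si',\De^{\van}_{-})\cdot \cI^h(\De^{\van}_{+},\tau')$ as stated.
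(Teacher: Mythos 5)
Your proposal is correct and follows essentially the same route as the paper: the paper proves the cohomological Corollary \ref{cor-intesection-cohomology} by exactly this chain (commutative diagram, isometry of Theorem \ref{th-cohomology-iso}, then bilinear expansion of the projections), and states the homological version as "similar," which is precisely what you carried out. Your explicit bilinear expansion, with the three cross terms collapsing to a single one, checks out.
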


Next, we consider the case of $0\not\in J^{\van}$. 
In this instance, we can reduce the arguments to those for $0\in J^{\van}$ 
using an appropriate projective transformation on $\P^k$. 
Therefore, the above results are valid in this case. 
Though we should consider a projective transformation to prove the claims,  
we do not need this to state the conclusions. 
Thus, we summarize the results. 
\begin{cor}
  \label{cor-intesection-homology-2}
  Let $\De^{\van}$ be a chamber in $T_z\cap \R^k$
  surrounded by $(L_{j_0} =0)$, $(L_{j_1} =0)$, $\dots$, $(L_{j_k} =0)$ 
  (note that the chamber is bounded if and only if $0\not\in J^{\van}$). 
  We set $\De^{\van}_{\pm} =\De^{\van} \ot u_z^{\pm 1} \in H_k (T_z ,u_z^{\pm 1})$ and  
  \begin{align*}
    \la \De^{\van}_{\pm}\ra &=\C(\l)\cdot \De^{\van}_{\pm} 
    \subset H_k(T_{z},u_z^{\pm 1}), \\
    \la \De^{\van}_{-}\ra^{\perp} &= \{ \si \in H_k(T_{z},u_z) \mid 
    \cI^h (\si ,\De^{\van}_{-})=0 \} , \\
    \la \De^{\van}_{+}\ra^{\perp} &= \{ \si \in H_k(T_{z},u_z^{-1}) \mid 
    \cI^h (\De^{\van}_{+} ,\si)=0 \} .
  \end{align*}
  Then, we have the following. 
  \begin{enumerate}[(i)]
  \item We can define $\fl^h_{\pm} : H_k(T_z,u_z^{\pm 1}) \to H_k(T_{z_0},u_{z_0}^{\pm 1})$ 
    by the limits of loaded chambers in Definition \ref{def:limit-homology}. 
  \item We obtain the direct sum decompositions 
    $H_k(T_{z},u_z^{\pm 1}) =\la \De^{\van}_{\pm}\ra \oplus \la \De^{\van}_{\mp}\ra^{\perp}$,  
    and the projections $\fp^h_{\pm}$ onto the second components 
    are given by (\ref{eq:2nd-proj-homology-1}).
  \item The diagrams (\ref{diagram-homology}) are commutative. 
  \item Theorem \ref{th-homology-iso} and 
    Corollary \ref{cor-intesection-homology} hold. 
  \end{enumerate}
\end{cor}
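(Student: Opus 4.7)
The plan is to reduce the case $0 \notin J^{\van}$ to the already-treated case $0 \in J^{\van}$ via a suitable projective change of coordinates on $\P^k$, and then verify that each construction (vanishing chamber, limit map, orthogonal complement, projection, intersection pairing) transforms equivariantly under this change.

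Concretely, since $|z_0 \la J^{\van}\ra| = 0$ while $|z_0\la J\ra| \neq 0$ for $J \neq J^{\van}$, in particular $z_0\la j_0 j_1 \dots j_k\ra$ has rank $k$, and its null space is a single line in $\C^{k+1}$. I would pick a projective linear transformation $\f : \P^k \to \P^k$ that sends the hyperplane $(L_{j_0}(t;z_0)=0)$ to the hyperplane at infinity $(t_0 = 0)$. This corresponds to left-multiplying $z_0$ by a suitable invertible $(k+1)\times(k+1)$ matrix $g$, producing a new coefficient matrix $\tilde z_0 = g z_0$ (after renormalizing the $0$-th column to $\TP(1,0,\dots,0)$, which can be absorbed into a rescaling of the $L_j$ and a reindexing of the parameters). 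The new vanishing index set is $\tilde J^{\van} = \{0, j_1, \dots, j_k\}$, so $0 \in \tilde J^{\van}$, and all the hypotheses of Section \ref{section:vanishing} are satisfied for $\tilde z_0$.

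Next I would check the naturality step: $\f$ induces a biholomorphism $T_{z_0} \to T_{\tilde z_0}$ (and similarly for a perturbation $z \to \tilde z = g z$), the multivalued functions satisfy $u_{\tilde z} \circ \f = c \cdot u_z$ for a locally constant nonzero scalar coming from the rescaling of the $L_j$, and consequently $\f$ induces $\C(\l)$-linear isomorphisms $H_k(T_z, u_z^{\pm 1}) \cong H_k(T_{\tilde z}, u_{\tilde z}^{\pm 1})$ that preserve the intersection pairings $\cI^h$, and similarly at $z_0$ and $\tilde z_0$. Under this identification, the bounded simplex $\De^{\van}$ in $T_z \cap \R^k$ surrounded by $(L_{j_0} = 0), \dots, (L_{j_k}=0)$ is carried to the unbounded simplex in $T_{\tilde z} \cap \R^k$ surrounded by the remaining $k$ hyperplanes (since $(L_{j_0} = 0)$ is sent to infinity), whose vertex is precisely $P(\tilde J^{\van} \setminus\{0\})$, which is the vanishing chamber of the previous subsection. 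Thus the vanishing cycles $\De^{\van}_{\pm}$ correspond on both sides, and the sets $(\De^{\van})^{\perp}$ match.

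With this compatibility in place, each item (i)--(iv) follows immediately by transport of structure: Lemma \ref{lem:van-perp} and the direct sum decomposition \eqref{eq:d-sum-homology} are preserved because they are statements about $\cI^h$-orthogonality; Definition \ref{def:limit-homology} of the limit of a loaded chamber depends only on the multivalued function $u_z$ and the pointwise convergence of chambers as $z \to z_0$, both of which are $\f$-equivariant; Proposition \ref{prop-well-def-homology} and the commutativity of the diagram \eqref{diagram-homology} translate verbatim; and Theorem \ref{th-homology-iso} together with Corollary \ref{cor-intesection-homology} transport the preservation of the intersection form and the explicit projection formula. The main potential obstacle is ensuring the projective transformation respects the normalization of the $0$-th column of $z$ (the fixed column $\TP(1,0,\dots,0)$): one has to verify that after left-multiplication by $g$ and a column-rescaling, the resulting rescaling factors merely multiply the $L_j$ by constants, and hence affect $u_z$ only by a locally constant overall factor that is harmless for both the homology class and the intersection pairing. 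Once this bookkeeping is confirmed, the reduction is complete and no new geometric input is needed.
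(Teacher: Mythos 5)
Your proposal is correct and follows essentially the same route as the paper, which likewise disposes of the case $0\not\in J^{\van}$ by reducing to the case $0\in J^{\van}$ via a projective transformation of $\P^k$ sending $(L_{j_0}=0)$ to the hyperplane at infinity. In fact, you supply more of the bookkeeping (the real linear change $g$, the renormalization of the $0$-th column, the locally constant factor on $u_z$, and the matching of the vanishing chambers and of $(\De^{\van})^{\perp}$) than the paper, which merely asserts the reduction.
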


\section{Examples}
\label{section:examples}
\subsection{The case where $k=n=2$}
We consider the case where 
a $3\times 6$ matrix $z$ defines six lines in $\P^2$.  
The twisted homology group for $z\in Z^{(0)}$ has been extensively studied
(e.g., \cite{Y}). 
We use our results for $z_0 \in Z^{(1)}$ with $|z_0 \la 123\ra |=0$. 
We assume $z\in Z^{(0)}$ and $z_0$ defines the line arrangements 
in Figure \ref{fig:Tz-Tz_0}, 
where the label ``$j$'' denotes the line $(L_j =0)$. 
Recall that $(L_0=0)$ defines the line at infinity. 
\begin{figure}[h]
  \setlength\unitlength{0.8pt}
  \begin{picture}(200,150)(0,-20)
    \put(0,0){\line(1,0){200}} \put(0,5){1}
    \put(0,-10){\line(4,1){200}} \put(0,-25){2}
    \put(70,-15){\line(1,1){130}} \put(60,-25){3}
    \put(130,-15){\line(1,3){50}} \put(120,-25){4}
    \put(170,-15){\line(0,1){150}} \put(175,-20){5}
    \put(50,120){$T_{z}$}
  \end{picture}  \qquad  \quad
  \begin{picture}(200,150)(0,-20)
    \put(0,0){\line(1,0){200}} \put(0,5){1}
    \put(45,-10){\line(4,1){160}} \put(30,-20){2}
    \put(70,-15){\line(1,1){130}} \put(80,-25){3}
    \put(130,-15){\line(1,3){50}} \put(130,-25){4}
    \put(170,-15){\line(0,1){150}} \put(175,-20){5}
    \put(50,120){$T_{z_0}$}
  \end{picture}
  \caption{$T_z$ and $T_{z_0}$ in $\R^2$.}
  \label{fig:Tz-Tz_0}
\end{figure}
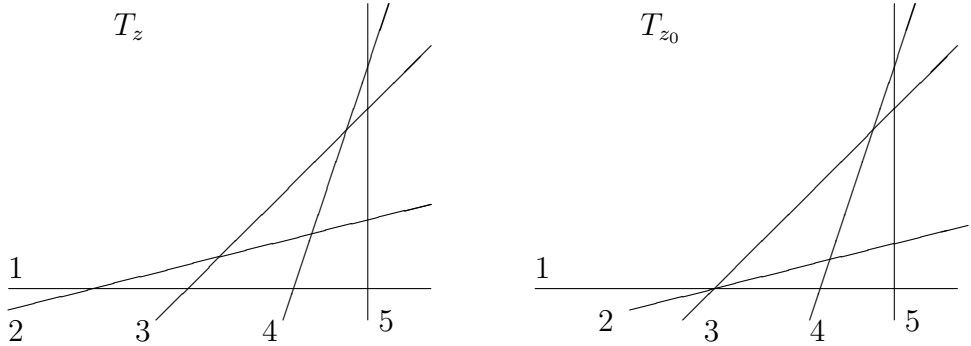

\subsubsection{Cohomology}
In this case, the vanishing form is $\f^{\van}=\f \la 123 \ra$. 
We consider the two intersection numbers 
$\cI_0^c (\f \la 012 \ra ,\f \la 012 \ra)$ and $\cI_0^c (\f \la 012 \ra ,\f \la 234 \ra)$ 
as examples. 
More precisely, we should denote the forms as 
$\fl^c_{\pm} (\f \la 012 \ra)$ and $\fl^c_{-}( \f \la 234 \ra )$; 
however, we use the same notation for simplicity. 
Indeed, we can write
\begin{align*}
  \fl^c_{\pm}(\f\la J\ra ) 
  =\dt\log(L_{j_1}/L_{j_0})\wedge \cdots \wedge \dt\log(L_{j_k}/L_{j_0})
  \left( =\f\la J\ra \in \W^k(T_{z_0})\right)
\end{align*}
as $k$-forms. 
To calculate the intersection numbers by the usual method, we need to consider 
the blow-up of $\P^2 (\supset T_{z_0})$ at the point $P(123)$. 
The exponent corresponding to the exceptional divisor is then
$\a_{123}=\a_1+\a_2+\a_3$, which is not an integer according to the assumption in (\ref{eq:condition-alpha}). 
By calculating the residues at each intersection point (of two lines), 
we obtain the following:
\begin{align}
  \label{eq:cohomology-ex-ans-1}
  \cI_0^c (\f \la 012 \ra ,\f \la 012 \ra) &= (2\pi\sqrt{-1})^2 
  \left( \frac{1}{\a_0 \a_1}+\frac{1}{\a_0 \a_2}+\frac{1}{\a_1 \a_{123}}+\frac{1}{\a_2 \a_{123}}\right),\\ 
  \label{eq:cohomology-ex-ans-2}
  \cI_0^c (\f \la 012 \ra ,\f \la 234 \ra) &= -(2\pi\sqrt{-1})^2 \cdot \frac{1}{\a_2 \a_{123}} .
\end{align}
For further details of this method, refer to \cite{M1}. 
Note that this method requires a slightly complicated calculation. 
For example, the sign ``$-$'' appears in (\ref{eq:cohomology-ex-ans-2}) when the pull-backs of the $2$-forms under the blow-up are expressed in the local coordinate system. 

We now calculate the intersection numbers using our method. 
By Fact \ref{fact:intersection} and $\f^{\van}=\f \la 123 \ra$, 
it is easy to obtain 
intersection numbers for $z\in Z^{(0)}$ as follows: 
{\allowdisplaybreaks
\begin{align*}
  \cI^c (\f^{\van} ,\f^{\van})
  &=(2\pi\sqrt{-1})^2 \cdot \frac{\a_1 +\a_2 +\a_3}{\a_1 \a_2 \a_3}
  =(2\pi\sqrt{-1})^2 \cdot \frac{\a_{123}}{\a_1 \a_2 \a_3}, \\
  \cI^c (\f^{\van} ,\f \la 012 \ra) 
  &=\cI^c (\f \la 012 \ra ,\f^{\van}) 
  =(2\pi\sqrt{-1})^2 \cdot \frac{1}{\a_1 \a_2}, \\   
  \cI^c (\f^{\van} ,\f \la 234 \ra) 
  &=\cI^c (\f \la 234 \ra ,\f^{\van}) 
  =(2\pi\sqrt{-1})^2 \cdot \frac{1}{\a_2 \a_3}, \\ 
  \cI^c (\f \la 012 \ra ,\f \la 012 \ra) 
  &=(2\pi\sqrt{-1})^2 \cdot \frac{\a_0 +\a_1 +\a_2}{\a_0 \a_1 \a_2}, \\ 
  \cI^c (\f \la 012 \ra ,\f \la 234 \ra) 
  &=\cI^c (\f \la 234 \ra ,\f \la 012 \ra) 
  =0 ,
\end{align*}
}\noindent
where we denote $\f^{\van}_{\pm}$ by $\f^{\van}$ for simplicity. 
These intersection numbers are determined by combinatorial data. 
The formula in Corollary \ref{cor-intesection-cohomology} shows that
\begin{align*}
  & \cI^c_0 (\f \la 012 \ra ,\f \la 012 \ra) =\cI^c (\f \la 012 \ra ,\f \la 012 \ra)
  -\frac{\cI^c(\f \la 012 \ra ,\f^{\van}) \cdot \cI^c(\f^{\van},\f \la 012 \ra)}
  {\cI^c(\f^{\van},\f^{\van})} \\
  &=(2\pi\sqrt{-1})^2 \left(
    \frac{\a_0 +\a_1 +\a_2}{\a_0 \a_1 \a_2} 
    -\frac{\frac{1}{\a_1 \a_2} \cdot \frac{1}{\a_1 \a_2}}{\frac{\a_{123}}{\a_1 \a_2 \a_3}}
  \right) \\
  &=(2\pi\sqrt{-1})^2 \cdot 
  \frac{\a_0 \a_1+\a_0 \a_2+\a_1 \a_{123}+\a_2 \a_{123}}{\a_0 \a_1 \a_2 \a_{123}}
\end{align*}
(recall that $\a_{123}=\a_1 +\a_2 +\a_3$), and 
\begin{align*}
  & \cI^c_0 (\f \la 012 \ra ,\f \la 234 \ra) =\cI^c (\f \la 012 \ra ,\f \la 234 \ra)
  -\frac{\cI^c(\f \la 012 \ra ,\f^{\van}) \cdot \cI^c(\f^{\van},\f \la 234 \ra)}
  {\cI^c(\f^{\van},\f^{\van})} \\
  &=0-(2\pi\sqrt{-1})^2 \cdot 
  \frac{\frac{1}{\a_1 \a_2} \cdot \frac{1}{\a_2 \a_3}}{\frac{\a_{123}}{\a_1 \a_2 \a_3}} 
  =-(2\pi\sqrt{-1})^2 \cdot \frac{1}{\a_2 \a_{123}} ,
\end{align*}
which coincide with (\ref{eq:cohomology-ex-ans-1}) and (\ref{eq:cohomology-ex-ans-2}), respectively. 
We can obtain these intersection numbers through simple calculations.

\subsubsection{Homology}
As the picture of $T_z$ is the same as that in \cite[p.187]{Y}, 
we use the branch of $u_z$ introduced there. 
The vanishing chamber $\De^{\van}\subset T_z\cap \R^2$ is the triangle 
surrounded by $(L_1=0)$, $(L_2=0)$, and $(L_3=0)$. 
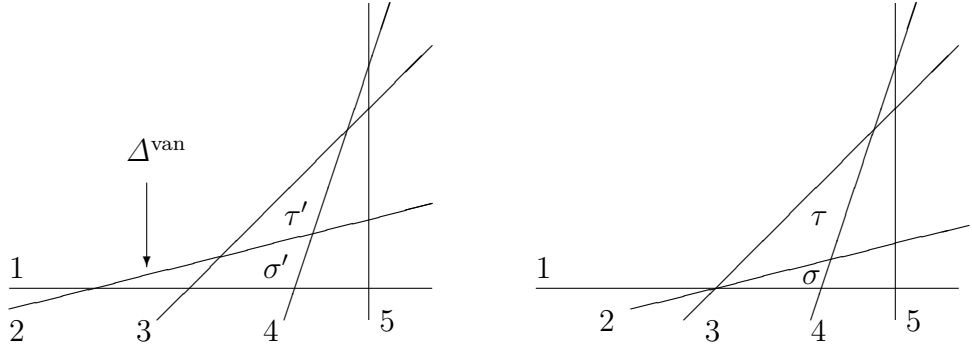
\begin{figure}[h]
  \setlength\unitlength{0.8pt}
  \begin{picture}(200,150)(0,-20)
    \put(0,0){\line(1,0){200}} \put(0,5){1}
    \put(0,-10){\line(4,1){200}} \put(0,-25){2}
    \put(70,-15){\line(1,1){130}} \put(60,-25){3}
    \put(130,-15){\line(1,3){50}} \put(120,-25){4}
    \put(170,-15){\line(0,1){150}} \put(175,-20){5}
    \put(65,50){\vector(0,-1){40}}
    \put(55,60){$\De^{\van}$}
    \put(130,30){$\tau'$}
    \put(120,5){$\si'$}
  \end{picture}
  \qquad \quad 
  \begin{picture}(200,150)(0,-20)
    \put(0,0){\line(1,0){200}} \put(0,5){1}
    \put(45,-10){\line(4,1){160}} \put(30,-20){2}
    \put(70,-15){\line(1,1){130}} \put(80,-25){3}
    \put(130,-15){\line(1,3){50}} \put(130,-25){4}
    \put(170,-15){\line(0,1){150}} \put(175,-20){5}
    \put(130,30){$\tau$}
    \put(126,2){$\si$}
  \end{picture}
  \caption{Chambers.}
  \label{fig:chambers}
\end{figure}

We consider two chambers $\si, \tau \subset T_{z_0}\cap \R^2$ 
(see the right side of Figure \ref{fig:chambers}): 
\begin{itemize}
\item $\si$ is the triangle surrounded by 
  $(L_1=0)$, $(L_2=0)$, and $(L_4=0)$, 
\item $\tau$ is the triangle surrounded by 
  $(L_2=0)$, $(L_3=0)$, and $(L_4=0)$. 
\end{itemize}
We denote the twisted cycles defined by these chambers 
as $\si_{\pm}, \tau_{\pm} \in H_k (T_{z_0} ,u_{z_0}^{\pm 1})$. 
As examples, we consider the two intersection numbers 
$\cI_0^h (\si_{+} ,\si_{-})$ and $\cI_0^h (\si_{+} ,\tau_{-})$. 
According to \cite[II-Section 3]{KY}, 
the intersection numbers are calculated as 
\begin{align}
  \label{eq:homology-ex-ans-1}
  \cI_0^h (\si_{+} ,\si_{-}) &= \frac{\l_1\l_2-1}{(\l_1-1)(\l_2-1)}
  \cdot \frac{\l_1\l_2\l_3\l_4-1}{(\l_4-1)(\l_1\l_2\l_3-1)} ,\\ 
  \label{eq:homology-ex-ans-2}
  \cI_0^h (\si_{+} ,\tau_{-}) &= -\frac{1}{\l_2-1} 
  \left( 1+\frac{1}{\l_4-1}+\frac{1}{\l_1\l_2\l_3-1} \right) \\
  \nonumber
  &=\frac{-(\l_1\l_2\l_3\l_4-1)}{(\l_2-1)(\l_4-1)(\l_1\l_2\l_3-1)} .
\end{align}
For the calculation in \cite{KY}, 
we need to consider 
the blow-up of $\P^2 (\supset T_{z_0})$ at the point $P(123)$. 
This makes it difficult to investigate the intersections between 
the blown-up chambers. 

We now calculate the intersection numbers using our method. 
It is easy to see that $\si_{\pm}$ and $\tau_{\pm}$ are expressed as 
\begin{align*}
  \si_{\pm} =\fl^c_{\pm}(\si'_{\pm}) ,\qquad
  \tau_{\pm} =\fl^c_{\pm}(\tau'_{\pm})
\end{align*}
for certain $\si'_{\pm} ,\tau'_{\pm} \in H_k(T_{z},u_z^{\pm1})$;
see the left side of Figure \ref{fig:chambers} for $\si'$ and $\tau'$.
By \cite[VIII.3.4]{Y}, we have
{\allowdisplaybreaks
\begin{align*}
  \cI^h (\De^{\van}_{+} ,\De^{\van}_{-})&=\frac{\l_1\l_2\l_3-1}{(\l_1-1)(\l_2-1)(\l_3-1)}, \\
  \cI^h (\De^{\van}_{+} ,\si'_{-})&=\frac{-(\l_1\l_2-1)}{(\l_1-1)(\l_2-1)(\l_3-1)}, \\ 
  \cI^h (\De^{\van}_{+} ,\tau'_{-})&=\frac{1}{(\l_2-1)(\l_3-1)}, \\   
  \cI^h (\si'_{+} ,\De^{\van}_{-})&=\frac{-\l_3(\l_1\l_2-1)}{(\l_1-1)(\l_2-1)(\l_3-1)}, \\ 
  \cI^h (\si'_{+} ,\si'_{-})&=\frac{(\l_1\l_2-1)(\l_3\l_4-1)}{(\l_1-1)(\l_2-1)(\l_3-1)(\l_4-1)} , \\
  \cI^h (\si'_{+} ,\tau'_{-} )&=\frac{-(\l_3\l_4-1)}{(\l_2-1)(\l_3-1)(\l_4-1)} .
\end{align*}
}\noindent
These are easier to calculate than (\ref{eq:homology-ex-ans-1}) and (\ref{eq:homology-ex-ans-2}), 
because we can check the intersections of chambers in $T_z \cap \R^k$ 
directly (without blowing-up). 
The formula in Corollary \ref{cor-intesection-homology} 
(which is valid by Corollary \ref{cor-intesection-homology-2}) shows that
\begin{align*}
  &\cI^h_0 (\si_{+}, \si_{-}) =\cI^h (\si'_{+} ,\si'_{-})
  -\frac{\cI^h(\si'_{+}, \De^{\van}_{-}) \cdot \cI^h(\De^{\van}_{+},\si'_{-})}
  {\cI^h(\De^{\van}_{+},\De^{\van}_{-})} \\
  &=\frac{(\l_1\l_2-1)(\l_3\l_4-1)}{(\l_1-1)(\l_2-1)(\l_3-1)(\l_4-1)}
  -\frac{\frac{-\l_3(\l_1\l_2-1)}{(\l_1-1)(\l_2-1)(\l_3-1)}
    \cdot \frac{-(\l_1\l_2-1)}{(\l_1-1)(\l_2-1)(\l_3-1)}}
  {\frac{\l_1\l_2\l_3-1}{(\l_1-1)(\l_2-1)(\l_3-1)}} \\
  &=\frac{(\l_1\l_2-1)(\l_1\l_2\l_3\l_4-1)}{(\l_1-1)(\l_2-1)(\l_4-1)(\l_1\l_2\l_3-1)},
\end{align*}
and
\begin{align*}
  &\cI^h_0 (\si_{+}, \tau) =\cI^h (\si'_{+} ,\tau'_{-})
  -\frac{\cI^h(\si'_{+}, \De^{\van}_{-}) \cdot \cI^h(\De^{\van}_{+},\tau'_{-})}
  {\cI^h(\De^{\van},\De^{\van}_{-})} \\
  &=\frac{-(\l_3\l_4-1)}{(\l_2-1)(\l_3-1)(\l_4-1)}
  -\frac{\frac{-\l_3(\l_1\l_2-1)}{(\l_1-1)(\l_2-1)(\l_3-1)}
    \cdot \frac{1}{(\l_2-1)(\l_3-1)}}{\frac{\l_1\l_2\l_3-1}{(\l_1-1)(\l_2-1)(\l_3-1)}} \\
  &=\frac{-(\l_1\l_2\l_3\l_4-1)}{(\l_2-1)(\l_4-1)(\l_1\l_2\l_3-1)} .
\end{align*}
In this way, we can calculate the intersection numbers for $z_0\in Z^{(1)}$ 
from those for $z\in Z^{(0)}$.

\subsection{Another basis of twisted cohomology}
Using the intersection numbers, 
we derive another basis of the twisted cohomology group for $z \in Z^{(1)}$.
Along with those in Fact \ref{fact:cohomology-basis-Z1}, 
this basis is useful when we consider the contiguity relations, 
as discussed in Section \ref{section:contiguity}. 
As in Section \ref{section:vanishing}, 
let $z\in Z^{(0)}$, $z_0 \in Z^{(1)}$ and $|z_0 \la J^{\van}\ra| =0$. 
\begin{proposition}
  \label{prop:cohomology-basis-Z1-2}
  Let $p\not\in J^{\van}$ and $l,l'\in J^{\van}$ with $l\neq l'$. 
  The set 
  \begin{align*}
    \{ \fl^c_{\pm}(\f\la J\ra ) \mid J\in \under{l'}{\cJ}{l}-\{ \under{l'}{J^{\van}}{p}\} \}
  \end{align*}
  then gives bases of $H^k(\W^{\bu}(T_{z_0}),\na^{\pm \a}_0)$. 
\end{proposition}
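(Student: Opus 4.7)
The plan is to use Theorem~\ref{th-cohomology-iso} to reduce the statement to a basis question in $H^k(\W^\bu(T_z),\na^{\pm\a})$ for the generic $z\in Z^{(0)}$, and then settle the latter via basis exchange against Fact~\ref{fact:basis} together with a Pl\"ucker-type identity on the $(k{+}2)$-subset $J^{\van}\cup\{p\}$.

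Set $S:=\under{l'}{\cJ}{l}-\{\under{l'}{J^{\van}}{p}\}$; note $|S|=\binom{k+n}{k}-1=\dim H^k(\W^\bu(T_{z_0}),\na_0^{\pm\a})$. By the commutative diagram~(\ref{diagram-cohomology}) together with the isomorphism of Theorem~\ref{th-cohomology-iso}, showing $\{\fl^c_\pm(\f\la J\ra)\mid J\in S\}$ is a basis is equivalent to showing $\{\fp^c_\pm(\f\la J\ra)\mid J\in S\}$ is a basis of $\la\f^{\van}_\mp\ra^\perp$. Because $\fp^c_\pm(\f\la J\ra)=\f\la J\ra-c_J\f^{\van}_\pm$ by (\ref{eq:2nd-proj-cohomology-1}), this is in turn equivalent (via the direct sum~(\ref{eq:d-sum-cohomology})) to the set $B':=\{\f\la J\ra\mid J\in S\}\cup\{\f^{\van}_\pm\}$ being a basis of $H^k(\W^\bu(T_z),\na^{\pm\a})$. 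But $B'$ is obtained from the basis $B_1:=\{\f\la J\ra\mid J\in\under{l'}{\cJ}{l}\}$ (Fact~\ref{fact:basis}) by swapping $\f\la\under{l'}{J^{\van}}{p}\ra$ for $\f^{\van}_\pm$, so the determinant of the corresponding transition matrix equals the coefficient $a_{\under{l'}{J^{\van}}{p}}$ in the unique expansion $\f^{\van}_\pm=\sum_{J\in\under{l'}{\cJ}{l}}a_J\,\f\la J\ra$. The proposition thus reduces to the nonvanishing of this single coefficient.

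To establish $a_{\under{l'}{J^{\van}}{p}}\ne 0$, apply the Pl\"ucker-type relation on $K:=J^{\van}\cup\{p\}$: the column dependence $\sum_j(-1)^j|z\la K-\{k_j\}\ra|\,z\la k_j\ra=0$, paired with $(1,t_1,\dots,t_k)$ and divided through by $\prod_{k_j\in K}L_{k_j}$, yields the form-level identity $\f^{\van}_\pm=\sum_{i=0}^{k}\epsilon_i\,\f\la\under{j_i}{J^{\van}}{p}\ra$ with $\epsilon_i\in\{\pm 1\}$, which holds in $\W^k(T_z)$ and hence in $H^k(\W^\bu(T_z),\na^{\pm\a})$. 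The single term with $j_i=l'$ already lies in $B_1$; each of the remaining terms contains $l'$ and must be reduced into $B_1$ using the explicit intersection numbers from Fact~\ref{fact:intersection} (via Cramer's rule applied to the invertible intersection matrix of $B_1$). The main obstacle is the bookkeeping that shows the contributions from these reductions do not cancel the direct $\epsilon_{i_0}$ term; the genericity hypothesis~(\ref{eq:condition-alpha}) then guarantees that $a_{\under{l'}{J^{\van}}{p}}$ is a nonzero rational function of $\a$. (In the base case $k=1$, a direct computation produces $a_{\under{l'}{J^{\van}}{p}}=-\a_p/\a_{l'}$, manifestly nonzero under~(\ref{eq:condition-alpha}); the general case proceeds analogously, or one may simply evaluate the rational function $a_{\under{l'}{J^{\van}}{p}}(\a)$ at a convenient specialization to certify nonvanishing.)
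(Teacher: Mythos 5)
Your reduction is sound and is a genuinely cleaner framing than the paper's: via Theorem \ref{th-cohomology-iso} and diagram (\ref{diagram-cohomology}) the claim correctly becomes the statement that $B'=\{\f\la J\ra\mid J\in S\}\cup\{\f^{\van}_{\pm}\}$ is a basis of $H^k(\W^\bu(T_{z}),\na^{\pm\a})$, i.e.\ that the coefficient $a_{\under{l'}{J^{\van}}{p}}$ of $\f\la \under{l'}{J^{\van}}{p}\ra$ in the expansion of $\f^{\van}_{\pm}$ over the basis $\{\f\la J\ra\mid J\in\under{l'}{\cJ}{l}\}$ is nonzero. (The paper instead proves invertibility of the intersection matrix $C_0$ by evaluating the determinant of a bordered matrix $C$ in two ways; your exchange coefficient is essentially the paper's scalar $c_2$ in disguise.) The Pl\"ucker-type identity $\f\la J^{\van}\ra=\sum_i \epsilon_i\,\f\la\under{j_i}{J^{\van}}{p}\ra$ is also correct as an identity of $k$-forms by (\ref{eq:phi}).

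The gap is the final, decisive step: you never actually show $a_{\under{l'}{J^{\van}}{p}}\neq 0$. Only one of the $k+1$ terms in the Pl\"ucker expansion lies in $\under{l'}{\cJ}{l}$, and you yourself flag the re-expansion of the other $k$ terms and the non-cancellation check as ``the main obstacle'' --- and then do not carry it out. The appeal to (\ref{eq:condition-alpha}) is a non sequitur here: genericity of $\a$ prevents a \emph{nonzero} rational function from vanishing, but the whole point is to show that $a_{\under{l'}{J^{\van}}{p}}(\a)$ is not identically zero as a rational function; neither the ``analogous'' general-case computation nor the ``convenient specialization'' is exhibited. The gap is, however, easy to close without any Pl\"ucker bookkeeping: by Fact \ref{fact:intersection}, $\cI^c(\f\la J\ra,\f\la\under{l}{J''}{l'}\ra)=0$ for $J\neq J''$ in $\under{l'}{\cJ}{l}$ (since $\#(J\cap\under{l}{J''}{l'})\le k-1$), while the diagonal entries are nonzero; pairing the expansion of $\f^{\van}_{+}$ against $\f\la\under{l}{J^{\van}}{p}\ra$ therefore isolates
\begin{align*}
a_{\under{l'}{J^{\van}}{p}}
=\frac{\cI^c(\f^{\van}_{+},\f\la\under{l}{J^{\van}}{p}\ra)}
{\cI^c(\f\la\under{l'}{J^{\van}}{p}\ra,\f\la\under{l}{J^{\van}}{p}\ra)}
=\pm\frac{\a_p}{\a_{l'}}\neq 0 ,
\end{align*}
which is exactly the kind of two-row computation the paper performs for its $c_2$. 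With that line added your argument is complete; without it, the proposal does not prove the proposition.
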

\begin{proof}
  It suffices for the intersection matrix 
  \begin{align*}
    C_0 = \Big( \cI^c_0 (\fl^c_{+}(\f\la J\ra ),\fl^c_{-}(\f\la J'\ra)) \Big)_{
      J\in \under{l'}{\cJ}{l}-\{ \under{l'}{J^{\van}}{p}\} ,\ 
      J'\in \under{l}{\cJ}{l'}-\{ \under{l}{J^{\van}}{p}\}
    }
  \end{align*}
  to be invertible. 
  We align $\under{l'}{\cJ}{l}-\{ \under{l'}{J^{\van}}{p}\}$ according to
  an arbitrary order, and define an order on $\under{l}{\cJ}{l'}-\{ \under{l}{J^{\van}}{p}\}$
  by the correspondence 
  \begin{align*}
    \under{l'}{\cJ}{l}-\{ \under{l'}{J^{\van}}{p}\} \ni 
    J\leftrightarrow \under{l}{J}{l'}
    \in \under{l}{\cJ}{l'}-\{ \under{l}{J^{\van}}{p}\} .
  \end{align*}
  We consider the systems
  $\{ J^{\van} \}\cup (\under{l'}{\cJ}{l}-\{ \under{l'}{J^{\van}}{p}\})$, 
  $\{ J^{\van} \}\cup (\under{l}{\cJ}{l'}-\{ \under{l}{J^{\van}}{p}\})$ and 
  their intersection matrix
  \begin{align*}
    C&=\Big( \cI^c (\f\la J\ra ,\f\la J'\ra) \Big)_{
      J \in \{ J^{\van} \}\cup (\under{l'}{\cJ}{l}-\{ \under{l'}{J^{\van}}{p}\}) ,\ 
      J'\in \{ J^{\van} \}\cup (\under{l}{\cJ}{l'}-\{ \under{l}{J^{\van}}{p}\})
    } \\
    &=
    \begin{pmatrix}
      \cI^c (\f^{\van}_{+} ,\f^{\van}_{-}) &\cdots &
      \cI^c (\f^{\van}_{+} ,\f\la J'\ra) &\cdots  \\
      \vdots &\ddots & \vdots & \\
      \cI^c (\f\la J\ra ,\f^{\van}_{-}) &\cdots &
      \cI^c (\f\la J\ra ,\f\la J'\ra) & \\ 
      \vdots & & \vdots & \ddots
    \end{pmatrix}
  \end{align*}
  on $H^k(\W^{\bu}(T_{z}),\na^{\pm \a})$.
  Using Fact \ref{fact:intersection}, 
  $C$ can be written as 
  \begin{align*}
    C=
    \begin{pmatrix}
      \cI^c (\f^{\van}_{+} ,\f^{\van}_{-}) &\cdots &
      \cI^c (\f^{\van}_{+} ,\f\la J'\ra) &\cdots  \\
      \vdots & &  & \\
      \cI^c (\f\la J\ra ,\f^{\van}_{-}) & &
      C_1 & \\ 
      \vdots & &  & 
    \end{pmatrix}, 
  \end{align*}
  where $C_1$ is an invertible diagonal matrix of size $\binom{k+n}{k}-1$. 
  To prove $|C_0|\neq 0$, we compute the determinant $|C|$ in two ways. 
  First, we add 
  \begin{align*}
    -\frac{\cI^c (\f\la J\ra ,\f^{\van}_{-})}{\cI^c (\f^{\van}_{+} ,\f^{\van}_{-})}
    \cdot (\textrm{the first row})
  \end{align*}
  to the $J$-th row of $C$. 
  The $(J,J')$-th entry of $C_1$ becomes
  \begin{align*}
    \cI^c (\f\la J\ra ,\f\la J'\ra) 
    -\frac{\cI^c (\f\la J\ra ,\f^{\van}_{-})}{\cI^c (\f^{\van}_{+} ,\f^{\van}_{-})} 
    \cdot \cI^c (\f^{\van}_{+} ,\f\la J'\ra)
    =\cI^c_0 (\fl^c_{+}(\f\la J\ra ),\fl^c_{-}(\f\la J'\ra)).
  \end{align*}
  We then obtain
  \begin{align*}
    |C|=
    \begin{vmatrix}
      \cI^c (\f^{\van}_{+} ,\f^{\van}_{-}) &\cdots &
      \cI^c (\f^{\van}_{+} ,\f\la J'\ra) &\cdots  \\
      0 & &  & \\
      \vdots & &
      C_0 & \\ 
      0 & &  & 
    \end{vmatrix}
    =\cI^c (\f^{\van}_{+} ,\f^{\van}_{-}) \cdot |C_0| ,
  \end{align*}
  and hence our claim is equivalent to $|C|\neq 0$ by (\ref{eq:self-intersection-van-cohomology}). 
  Next, we compute $|C|$ in another way. 
  We add each
  \begin{align*}
    -\frac{\cI^c (\f^{\van}_{+},\f\la \under{l}{J}{l'}\ra)}{\cI^c (\f\la J\ra ,\f\la \under{l}{J}{l'}\ra)}
    \cdot (\textrm{the $J$-th row})
  \end{align*}
  to the first row of $C$ (note that the denominator is the diagonal entry of $C_1$). 
  We set 
  \begin{align*}
    c_2 =\cI^c (\f^{\van}_{+} ,\f^{\van}_{-}) 
    -\sum_{J\in \under{l'}{\cJ}{l}-\{ \under{l'}{J^{\van}}{p}\}}
    \frac{\cI^c (\f^{\van}_{+},\f\la \under{l}{J}{l'}\ra)}{\cI^c (\f\la J\ra ,\f\la \under{l}{J}{l'}\ra)}
    \cdot \cI^c (\f\la J\ra ,\f^{\van}_{-}) 
  \end{align*}
  such that we have 
  \begin{align*}
    |C| =
    \begin{vmatrix}
      c_2 &0 &
      \cdots & 0  \\
      \vdots & &  & \\
      \cI^c (\f\la J\ra ,\f^{\van}_{-}) & &
      C_1 & \\ 
      \vdots & &  & 
    \end{vmatrix}
    =c_2 \cdot |C_1| ,
  \end{align*}
  and our claim is reduced to $c_2 \neq 0$. 
  For $J\in \under{l'}{\cJ}{l}-\{ \under{l'}{J^{\van}}{p}\}$, we have
  \begin{align*}
    \cI^c (\f\la J\ra ,\f^{\van}_{-}) \neq 0 
    \Longleftrightarrow 
    J=\under{l'}{J^{\van}}{q} \ (q\not\in J^{\van},\ q\neq p) .
  \end{align*}
  Since $\under{l}{(\under{l'}{J^{\van}}{q})}{l'}=\under{l}{J^{\van}}{q}$ 
  as sets, we obtain 
  \begin{align*}
    &\cI^c (\f^{\van}_{+},\f\la \under{l}{(\under{l'}{J^{\van}}{q})}{l'}\ra) 
    \cdot \cI^c (\f\la \under{l'}{J^{\van}}{q}\ra ,\f^{\van}_{-})
    =-\frac{(\tpi)^k \a_{l}}{\prod_{j\in J^{\van}}\a_j} \cdot 
      \frac{(\tpi)^k\a_{l'}}{\prod_{j\in J^{\van}}\a_j} , \\
    & \cI^c (\f\la \under{l'}{J^{\van}}{q}\ra ,\f\la \under{l}{(\under{l'}{J^{\van}}{q})}{l'}\ra)
    =\frac{(\tpi)^k\a_l \a_{l'}}{\a_q \prod_{j\in J^{\van}} \a_j} .
  \end{align*}
  Thus, we have
  \begin{align*}
    &\frac{c_2}{(\tpi)^k} 
    = \frac{\sum_{j\in J^{\van}}\a_j }{\prod_{j\in J^{\van}}\a_j }
    +\sum_{\substack{q\not\in J^{\van}\\ q\neq p}}
    \frac{\a_{l}}{\prod_{j\in J^{\van}}\a_j} \cdot \frac{\a_{l'}}{\prod_{j\in J^{\van}}\a_j}
    \cdot \frac{\a_q \prod_{j\in J^{\van}} \a_j}{\a_l \a_{l'}} \\
    &=\frac{1}{\prod_{j\in J^{\van}}\a_j }\Bigg(
      \sum_{j\in J^{\van}}\a_j +\sum_{\substack{q\not\in J^{\van}\\ q\neq p}} \a_q
    \Bigg)
    =\frac{\sum_{j\neq p}\a_j}{\prod_{j\in J^{\van}}\a_j}
    =\frac{-\a_p}{\prod_{j\in J^{\van}}\a_j} \neq 0 ,
  \end{align*}
  and the proof is complete. 
\end{proof}

\section{An application --- contiguity relations}
\label{section:contiguity}
As shown in \cite{GM-PC}, the intersection numbers on twisted cohomology groups are 
useful for providing explicit expressions of the contiguity relations for $z\in Z^{(0)}$. 
In this section, we show that similar discussions remain valid when $z\in Z^{(1)}$.

\subsection{Contiguity relations for hypergeometric integrals}
\label{subsection:contiguity-integral}
We use the same notation as in Section \ref{section:vanishing}. 
We fix $j_0\in J^{\van}$, $q\not\in J^{\van}$ and a twisted cycle $\si \in H_k(T_{z_0},u_{z_0})$. 
We set 
$\cJ^{\circ}=\under{q}{\cJ}{j_0} -\{ J^{\van}\}$ and
take the basis of $H^k(\W^{\bu}(T_{z_0}),\na^{\a}_0)$ as 
$\{ \f\la J\ra \mid J\in \cJ^{\circ} \}$ (cf. Fact \ref{fact:cohomology-basis-Z1}).
We consider the following column vector consisting of hypergeometric integrals:
\begin{align*}
  \bF(\a;z_0 )={\TP{\left( \dots, \int_{\si} u_{z_0}(t)\f\la J\ra ,\dots \right)}}_{J\in \cJ^{\circ}} .
\end{align*}

We fix $l\neq j_0$, 
and set 
\begin{align*}
  &\a^{(l)}=\a +e_l -e_{j_0},  \\
  &\w_0^{(l)}=\w_0 +\dt \log L_l -\dt \log L_{j_0},\quad
    \na_0^{\a^{(l)}} =\dt +\w_0^{(l)}\we  ,
\end{align*}
where $e_l$ is the $l$-th unit vector. 
Note that $\dt \log L_{0} =\dt \log (1)=0$. 
\begin{notation}
  We write 
  \begin{align*}
    &\cohom =H^k (\Omd (T_{z_0}),\na_0^{\a}) ,& 
    &\cohomi =H^k (\Omd (T_{z_0}),\na_0^{\a^{(l)}}) ,\\
    &\cohom^{\vee}=H^k (\Omd (T_{z_0}),\na_0^{-\a}) ,& 
    &\cohomi^{\vee}=H^k (\Omd (T_{z_0}),\na_0^{-\a^{(l)}}) ,&
  \end{align*}
  and denote
  the intersection pairing between 
  $\cohom$ and $\cohom^{\vee}$
  (resp. $\cohomi$ and $\cohomi^{\vee}$)
  as $\cI_0$ (resp. $\cI_0^{(l)}$), for simplicity. 
  For a given $\psi \in \W^k (T_{z_0})$, 
  to clarify which cohomology group $\psi$ belongs to, 
  we write $[\psi]$, $[\psi]_l$, $[\psi]^{\vee}$, and $[\psi]_l^{\vee}$
  to denote the element of $\cohom$, $\cohomi$, $\cohom^{\vee}$, and $\cohomi^{\vee}$, respectively, 
 that is represented by $\psi$. 
  If there is no possibility of confusion, we denote 
  $\fl^c_{\pm}(\f\la J\ra )$ by $\f\la J\ra$, 
  as they are the same $k$-form formally. 
\end{notation}
The following is proved in a similar manner to \cite[Proposition 5.2]{GM-PC}. 
\begin{lemma}[\cite{GM-PC}]
  \label{lem:linear-map}
  The map 
  \begin{align*}
    \Conti{l} : \cohomi \ni [\f ]_l \mapsto 
    \left[ \frac{L_l}{L_{j_0}} \cdot \f \right] \in \cohom 
  \end{align*}
  is a well-defined linear map. 
\end{lemma}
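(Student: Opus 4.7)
The plan is to verify the intertwining identity
\[
\na_0^{\a}\!\left(\frac{L_l}{L_{j_0}}\cdot \eta\right) = \frac{L_l}{L_{j_0}}\cdot \na_0^{\a^{(l)}}(\eta), \qquad \eta \in \W^{k-1}(T_{z_0}),
\]
from which both well-definedness on the quotient and linearity of $\Conti{l}$ follow immediately. First I would note that the target lands in the correct ambient space: the rational function $L_l/L_{j_0}$ has zero divisor supported on $(L_l=0)$ and polar divisor supported on $(L_{j_0}=0)$, both of which are components of $\P^k - T_{z_0}$. Hence multiplication by $L_l/L_{j_0}$ preserves $\W^k(T_{z_0})$, the space of rational $k$-forms with poles only along $\P^k - T_{z_0}$.

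The main computation is a direct application of the Leibniz rule. Setting $f = L_l/L_{j_0}$, one has $\dt f = f\cdot(\dt\log L_l - \dt\log L_{j_0})$, so
\begin{align*}
\na_0^{\a}(f\eta) &= \dt(f\eta) + \w_0 \we (f\eta) \\
&= f\bigl(\dt\log L_l - \dt\log L_{j_0}\bigr)\we \eta + f\,\dt\eta + f\,\w_0 \we \eta \\
&= f\bigl(\dt\eta + (\w_0 + \dt\log L_l - \dt\log L_{j_0})\we \eta\bigr) = f\cdot \na_0^{\a^{(l)}}(\eta),
\end{align*}
using the definition $\w_0^{(l)} = \w_0 + \dt\log L_l - \dt\log L_{j_0}$ in the last equality.

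This identity at once gives well-definedness on cohomology: if $[\f]_l = [\f']_l$ in $\cohomi$, then $\f-\f' = \na_0^{\a^{(l)}}(\eta)$ for some $\eta \in \W^{k-1}(T_{z_0})$, and then $f(\f-\f') = \na_0^{\a}(f\eta)$ is a coboundary in $\cohom$. Linearity of $\Conti{l}$ is obvious since multiplication by the fixed function $f$ is $\C$-linear. There is no real obstacle here; the lemma is a formal consequence of the specific relationship between $\na_0^{\a}$ and $\na_0^{\a^{(l)}}$, namely that they differ precisely by the logarithmic derivative of $L_l/L_{j_0}$. The degeneracy of $z_0$ plays no role in this verification, and the same proof as for $z\in Z^{(0)}$ in \cite[Proposition 5.2]{GM-PC} works verbatim.
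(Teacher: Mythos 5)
Your proof is correct and is exactly the standard intertwining argument $\na_0^{\a}(f\eta)=f\cdot\na_0^{\a^{(l)}}(\eta)$ that the paper invokes by deferring to \cite[Proposition 5.2]{GM-PC}; the paper gives no further detail, so your computation fills in precisely the intended reasoning. Your added observations (that multiplication by $L_l/L_{j_0}$ preserves $\W^k(T_{z_0})$, and that the degeneracy of $z_0$ is irrelevant here) are accurate.
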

Let $\conti{l} (\a;z_0)$ be the representation matrix of $\Conti{l}$ 
with respect to the bases 
$\{ [\f\la J\ra ]_l \mid J\in \cJ^{\circ} \}$ of $\cohomi$ 
and $\{ [\f\la J\ra ] \mid J\in \cJ^{\circ} \}$ of $\cohom$.
As
\begin{align*}
  \int_{\si} u_{z_0}(t) \cdot \Conti{l} ([\f \la J \ra ]_l)
  =\int_{\si} \left( u_{z_0}(t) \cdot \frac{L_l}{L_{j_0}} \right) \cdot \f\la J\ra
  =\left. \int_{\si} u_{z_0}(t) \f\la J\ra \right|_{\a \to \a^{(l)}} , 
\end{align*}
we have the contiguity relation
\begin{align*}
  \bF (\a^{(l)};z_0)=\conti{l} (\a;z_0) \cdot \bF (\a;z_0) .
\end{align*}
By deriving an expression for $\conti{l} (\a;x)$, 
we obtain an explicit form of the contiguity relation. 
\begin{theorem}
  \label{th:contiguity}
  We define $\under{l}{\cJ^{\circ}}{j_0}$ by replacing $l$ with $q$ in $\cJ^{\circ}$. 
  We next define $\under{j_0}{\cJ^{\circ}}{l}$
  by replacing $j_0$ with $l$ in $\under{l}{\cJ^{\circ}}{j_0}$ 
  (we align their elements according to these correspondences). 
  Namely, 
  \begin{enumerate}[(i)]
  \item if $l\not\in J^{\van}$, we have 
    $\under{l}{\cJ^{\circ}}{j_0} =\under{l}{\cJ}{j_0} -\{ J^{\van}\}$ and 
    $\under{j_0}{\cJ^{\circ}}{l} =\under{j_0}{\cJ}{l} -\{ \under{j_0}{J^{\van}}{l}\}$ 
    (note that if $l=q$, then $\under{l}{\cJ^{\circ}}{j_0} =\cJ^{\circ}$), 
  \item if $l\in J^{\van}$, we have 
    $\under{l}{\cJ^{\circ}}{j_0} =\under{l}{\cJ}{j_0} -\{ \under{l}{J^{\van}}{q} \}$ and 
    $\under{j_0}{\cJ^{\circ}}{l} =\under{j_0}{\cJ}{l} -\{ \under{j_0}{(\under{l}{J^{\van}}{q})}{l}\}$. 
  \end{enumerate}
  The representation matrix $\conti{l}(\a;z_0)$ admits the expression  
  \begin{align*}
    \conti{l} (\a;z_0)=C(\a^{(l)})P_l (\a^{(l)})^{-1} D_l(z_0) Q_l (\a) C(\a)^{-1}, 
  \end{align*}
  where 
  \begin{align*}
    &D_l(z_0)=\diag \left( \ldots,\frac{|z_0 \la \under{j_0}{J}{l} \ra |}{|z_0 \la J \ra |} 
      ,\ldots \right)_{J\in\under{l}{\cJ^{\circ}}{j_0}} ,&
    &C(\a)=\Bigl( \cI_0 (\f \la I \ra , \f \la J \ra )\Bigr)_{I,J\in \cJ^{\circ}} , \\
    &P_l(\a)=\Bigl( \cI_0 (\f \la I \ra , \f \la J \ra ) 
    \Bigr)_{I\in \under{j_0}{\cJ^{\circ}}{l},J\in \cJ^{\circ}} ,& 
    &Q_l(\a)=\Bigl( \cI_0 (\f \la I \ra , \f \la J \ra ) 
    \Bigr)_{I\in \under{l}{\cJ^{\circ}}{j_0},J\in \cJ^{\circ}} .
  \end{align*}
\end{theorem}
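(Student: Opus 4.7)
The plan is to introduce auxiliary bases of $\cohomi$ and $\cohom$ in which $\Conti{l}$ is represented by the diagonal matrix $D_l(z_0)$, and then express the two changes of basis through the intersection matrices $C,P_l,Q_l$. The starting observation is the elementary identity, valid for any $I'=\la l,i'_1,\dots,i'_k\ra$ with $l\in I'$ and $j_0\notin I'$,
\begin{align*}
  \frac{L_l}{L_{j_0}}\f\la I'\ra
  =\frac{|z_0\la I'\ra|}{|z_0\la \under{l}{I'}{j_0}\ra|}\,\f\la \under{l}{I'}{j_0}\ra ,
\end{align*}
which holds as $k$-forms in $\W^k(T_{z_0})$, since both sides equal $|z_0\la I'\ra|\,\dd t_1\we\cdots\we \dd t_k/(L_{j_0}L_{i'_1}\cdots L_{i'_k})$ by (\ref{eq:phi}). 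Under the natural bijection $I'=\under{j_0}{J}{l}\leftrightarrow J=\under{l}{I'}{j_0}$ between $\under{j_0}{\cJ^{\circ}}{l}$ and $\under{l}{\cJ^{\circ}}{j_0}$, this reads
\begin{align*}
  \Conti{l}\bigl([\f\la \under{j_0}{J}{l}\ra]_l\bigr)
  =\frac{|z_0\la \under{j_0}{J}{l}\ra|}{|z_0\la J\ra|}\,[\f\la J\ra] \qquad (J\in \under{l}{\cJ^{\circ}}{j_0}),
\end{align*}
so that in the auxiliary bases $\{[\f\la \under{j_0}{J}{l}\ra]_l\}_{J\in \under{l}{\cJ^{\circ}}{j_0}}$ of $\cohomi$ and $\{[\f\la J\ra]\}_{J\in \under{l}{\cJ^{\circ}}{j_0}}$ of $\cohom$, the matrix of $\Conti{l}$ is exactly $D_l(z_0)$.

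Next I would check that these auxiliary index sets furnish bases. When $l\notin J^{\van}$, $\under{j_0}{\cJ^{\circ}}{l}$ matches the first family in Fact \ref{fact:cohomology-basis-Z1} (with $p=l$, $j_{l'}=j_0$) and $\under{l}{\cJ^{\circ}}{j_0}$ matches the third family (with $q=l$, $j_l=j_0$); when $l\in J^{\van}$, both sets fall under Proposition \ref{prop:cohomology-basis-Z1-2} with $p=q$ and the two elements $j_0,l$ of $J^{\van}$ in the role of its $l,l'$. In either case, the basis property transfers to the auxiliary systems at parameter $\a^{(l)}$ as well, because the basis criteria depend only on combinatorial data plus the non-integrality of $\sum_{p\in J^{\van}}\a_p$, which is preserved by the shift $\a\mapsto\a^{(l)}$.

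The remaining linear-algebra assembly is standard. Writing $[\f\la J\ra]_l=\sum_{I'}s_{I'J}[\f\la I'\ra]_l$ in $\cohomi$ and pairing with $[\f\la K\ra]_l^{\vee}$ yields the matrix identity $C(\a^{(l)})=S^{T}P_l(\a^{(l)})$, hence $S^{T}=C(\a^{(l)})P_l(\a^{(l)})^{-1}$; symmetrically, expanding $[\f\la K\ra]=\sum_I t_{IK}[\f\la I\ra]$ in $\cohom$ for $K\in \under{l}{\cJ^{\circ}}{j_0}$ and pairing with $[\f\la L\ra]^{\vee}$ gives $T^{T}=Q_l(\a)C(\a)^{-1}$. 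Composing, one gets $\conti{l}(\a;z_0)=S^{T}D_l(z_0)T^{T}$, which is exactly the stated formula. The main obstacle is the case-by-case verification that the auxiliary sets are bases; the case $l\in J^{\van}$ relies essentially on Proposition \ref{prop:cohomology-basis-Z1-2} and on the hypothesis $\sum_{p\in J^{\van}}\a_p\notin\Z$, which also guarantees invertibility of $C(\a),C(\a^{(l)}),P_l(\a^{(l)})$ so that the right-hand side of the formula makes sense.
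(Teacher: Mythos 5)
Your proposal is correct and follows essentially the same route as the paper's proof: both use the identity $\f\la \under{j_0}{J}{l}\ra=\frac{L_{j_0}}{L_l}\cdot\frac{|z_0\la \under{j_0}{J}{l}\ra|}{|z_0\la J\ra|}\,\f\la J\ra$ from (\ref{eq:phi}) to show that $\Conti{l}$ is represented by $D_l(z_0)$ in the auxiliary bases indexed by $\under{j_0}{\cJ^{\circ}}{l}$ and $\under{l}{\cJ^{\circ}}{j_0}$ (whose basis property comes from Fact \ref{fact:cohomology-basis-Z1} and Proposition \ref{prop:cohomology-basis-Z1-2}), and then perform the change of basis via the intersection matrices. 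Your explicit remark that the non-integrality hypotheses persist under $\a\mapsto\a^{(l)}$ is a small point of extra care the paper leaves implicit, but it is not a different argument.
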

The proof is similar to that for \cite[Theorem 5.3]{GM-PC}.
Note that we can evaluate each entry of the above intersection matrices using 
Corollary \ref{cor-intesection-cohomology}. 
\begin{proof}
  By Fact \ref{fact:cohomology-basis-Z1} and Proposition \ref{prop:cohomology-basis-Z1-2}, 
  $\{ [\f\la J\ra ]_l \mid J\in \under{j_0}{\cJ^{\circ}}{l} \}$ and 
  $\{ [\f\la J\ra ] \mid J\in \under{l}{\cJ^{\circ}}{j_0} \}$ are bases of 
  $\cohomi$ and $\cohom$, respectively. 
  By the definition, each element in $\under{j_0}{\cJ^{\circ}}{l}$ is 
  uniquely expressed as $\under{j_0}{J}{l}$ with $J\in \under{l}{\cJ^{\circ}}{j_0}$.  
  By (\ref{eq:phi}), we have 
  \begin{align*}
    \f \la \under{j_0}{J}{l} \ra 
    =\frac{|z_0 \la \under{j_0}{J}{l}\ra|}{\prod_{j\in \under{j_0}{J}{l}} L_j} \dd t
    =\frac{L_{j_0}}{L_l} \cdot 
    \frac{|z_0 \la \under{j_0}{J}{l}\ra|}{\prod_{j\in J} L_j} \dd t
    =\frac{L_{j_0}}{L_l} \cdot 
    \frac{|z_0 \la \under{j_0}{J}{l}\ra|}{|z_0 \la J \ra|} \cdot 
    \f \la J \ra . 
  \end{align*}
  The representation matrix of $\Conti{l}$ 
  with respect to these bases then coincides with $D_l(z_0)$. 
  By
  \begin{align*}
    \cI_0^{(l)} ([\f \la J \ra ]_l, [\f \la J' \ra ]_l^{\vee} )
    =\cI_0 ([\f \la J \ra ], [\f \la J' \ra ]^{\vee} ) \Big|_{\a \to \a^{(l)}}
  \end{align*}
  and the linearity of the intersection forms $\cI_0$ and $\cI_0^{(l)}$, 
  we have  
  \begin{align*}
    \begin{pmatrix}
      \vdots \\ [\f \la {J'} \ra ] \\ \vdots
    \end{pmatrix}_{J'\in \under{l}{\cJ^{\circ}}{j_0}}
    &=Q_l(\a)C(\a)^{-1}
    \begin{pmatrix}
      \vdots \\ [\f \la {J} \ra ] \\ \vdots
    \end{pmatrix}_{J\in \cJ^{\circ}} ,\\ 
    \begin{pmatrix}
      \vdots \\ [\f \la {J'} \ra ]_l \\ \vdots
    \end{pmatrix}_{J'\in \under{j_0}{\cJ^{\circ}}{l}}
    &=P_l(\a^{(l)})C(\a^{(l)})^{-1}
    \begin{pmatrix}
      \vdots \\ [\f \la {J} \ra ]_l \\ \vdots
    \end{pmatrix}_{J\in \cJ^{\circ}} .
  \end{align*}
  Thus, the proof is complete. 
\end{proof}

\subsection{Contiguity relations for hypergeometric series}
As in \cite{GM-PC} and \cite{TGKT}, 
the contiguity relations are utilized for algebraic statistics. 
The hypergeometric polynomial is regarded as 
the normalizing constant of 
the hypergeometric distribution of 
two-way contingency tables 
with fixed marginal sums. 
Using our result for $z_0 \in Z^{(1)}$, 
we can evaluate the normalizing constants when 
a fixed cell is zero. 
To demonstrate such an application, we derive the contiguity relations for 
hypergeometric series.

To relate hypergeometric integrals to series, 
we use a specified $z\in Z^{(01)}$. 
Let  $x_{ij}$ $(1\le i\le k,\ 1\le j\le n)$ be 
$k\times n$ variables and $x=(x_{ij})$ be the matrix arranging them.
We set a $(k+1)\times (k+n+2)$ matrix 
\begin{align*}
  \tx 
  &=(\tx_{ij})_{
    \begin{subarray}{l}
      0\le i\le k\\ 0\le j\le k+n+1
    \end{subarray}} \\
  &=\bordermatrix{
  &0&1& \cdots &k&k+1& \cdots &k+n& k+n+1\cr
  0&1&0& \cdots      &0&1  & \cdots &1 &1 \cr
  1&0&1&   &0&x_{11}&\cdots&x_{1n} &1 \cr
  \vdots&\vdots & &\ddots &\vdots &\vdots &\ddots&\vdots &\vdots \cr
  k&0&0&\cdots  &1&x_{k1}&\cdots&x_{kn} &1 \cr
  } .
\end{align*}
We set 
\begin{align*}
  X^{(i)}=\{ x\in M(k,n;\C )\mid \tx \in Z^{(i)} \} ,\quad i=0,1,
\end{align*}
and 
\begin{align*}
  X_0^{(1)}=\{ x=(x_{ij})\in X^{(1)} \mid \textrm{exactly one $x_{\imath \jmath}$ is zero} \} .
\end{align*}
Note that, for $\tx \in Z^{(01)}$, 
the linear forms of the $L_j$ are written as 
\begin{align*}
  &L_j=t_j \ (0\le j\le k),\quad 
  L_{k+j}=t_0+t_1x_{1j}+\cdots+t_kx_{kj} \ (1\le j\le n), \\
  &L_{k+n+1}=t_0+t_1+\cdots+t_k .
\end{align*}
By setting $x_{\imath \jmath}=0$ in \cite[Proposition 6.1]{GM-PC} if necessary, 
we obtain a power series expansion of a hypergeometric integral. 
\begin{lemma}[\cite{GM-PC}]
  We assume that $x\in X^{(0)} \cup X_0^{(1)}$ and 
  each entry of $x$ is a real number that
  is sufficiently close to $0$. 
  The standard simplex 
  \begin{align*}
    \{(t_1,\dots,t_k)\in \R^k\mid t_1,\dots,t_k<0, \ t_1+\cdots+t_k>-1\}
  \end{align*}
  belongs to $\bC (\cA_{\tx})$. 
  Let $\triangle$ be a twisted cycle defined by  
  the simplex loading $u_{\tx}(t)$ whose branch is given
  by the following. 
  \begin{align*}
    \begin{array}{|c|c|c|c|}
      \hline
      &i=1,\ldots ,k & i=k+1,\ldots ,k+n & i=k+n+1 \\ \hline
      \arg L_i &-\pi & 0 & 0 \\ \hline
    \end{array}
  \end{align*}
  Then, the hypergeometric integral
  \begin{align*}
    F (\a,x) 
    &=\int_{\triangle} u_{\tx}(t) \f \la 01\cdots k\ra \\
    &=\int_{\triangle}\prod_{i=1}^k t_i^{\a_i} 
      \cdot \prod_{j=1}^n \Bigl( 1+\sum_{i=1}^k x_{ij}t_i \Bigr)^{\a_{k+j}} 
      \cdot \Bigl( 1+\sum_{i=1}^k t_i \Bigr)^{\a_{k+n+1}} 
      \frac{\dd t}{t_1 \cdots t_k}
  \end{align*}
  admits the series expansion
  \begin{align*}
    e^{-\pi \sqrt{-1}(\a_1 +\cdots +\a_k)} \cdot 
    \prod_{i=1}^k \G(\a_i)\G(-\a_i+1) \cdot 
    \prod_{j=1}^{n+1}\G(\a_{k+j} +1) \cdot S(\a ;x) ,
  \end{align*}
  where 
  \begin{align*}
    S(\a;x)=
    \sum_{m=(m_{ij})\in M(k,n;\Z_{\geq 0})}
    \frac{1}{\G_m(\a)} \cdot \prod_{i,j}x_{ij}^{m_{ij}} 
  \end{align*}
  and 
  \begin{align*}
    \G_m(\a)
    =&\prod_{i=1}^k \G(-\a_i -\sum_{j=1}^n m_{ij} +1)
       \cdot \prod_{j=1}^n \G(\a_{k+j}-\sum_{i=1}^k m_{ij}+1) \\
     &\cdot \G(\sum_{i=1}^{k}\a_i+\a_{k+n+1}+\sum_{i=1}^k \sum_{j=1}^n m_{ij}+1)
       \cdot \prod_{i=1}^k \prod_{j=1}^n \G(m_{ij}+1).
  \end{align*}
  Here, if $x\in X_0^{(1)}$ with $x_{\imath \jmath}=0$, 
  we regard $(x^0_{\imath \jmath})^{m_{\imath \jmath}}$ as 
  \begin{align*}
    (x^0_{\imath \jmath})^{m_{\imath \jmath}} =
    \begin{cases}
      1 & (\textrm{if} \quad m_{\imath \jmath} =0), \\
      0 & (\textrm{if} \quad m_{\imath \jmath} \neq 0) .
    \end{cases}
  \end{align*}
\end{lemma}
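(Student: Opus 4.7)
The plan is to reduce to the non-degenerate case $x\in X^{(0)}$, which is \cite[Proposition 6.1]{GM-PC}, and extend to $x\in X_0^{(1)}$ by an analytic continuation argument in the entries of $x$. For the underlying geometry, I would first verify that on the closure of the standard simplex, where $-1\leq t_i\leq 0$, we have
\begin{align*}
L_{k+j}(t;\tx) = 1 + \sum_{i=1}^k x_{ij} t_i \geq 1 - k\cdot \max_{i,j}|x_{ij}| > 0
\end{align*}
as long as $\max|x_{ij}|<1/k$, while $L_{k+n+1}\in (0,1]$. Thus the only hyperplanes of $\cA_{\tx}$ meeting the closed simplex are $(L_i=0)$ $(1\leq i\leq k)$ and $(L_{k+n+1}=0)$; the simplex is therefore a bounded chamber, uniformly for $x\in X^{(0)}\cup X_0^{(1)}$ sufficiently close to $0$, and the prescribed branch specifications are well-defined and vary continuously.

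Next, for $x\in X^{(0)}$ I would expand each factor $L_{k+j}^{\a_{k+j}}=(1+\sum_i x_{ij}t_i)^{\a_{k+j}}$ via the multinomial series --- which converges absolutely and uniformly on the compact simplex since $|\sum_i x_{ij}t_i|<1$ --- and exchange summation and integration to obtain a sum of monomial integrals over the simplex. Each such integral is evaluated by the Dirichlet--Liouville formula, producing ratios of Gamma functions. After extracting the branch sign $e^{-\pi\sqrt{-1}(\a_1+\cdots+\a_k)}$ coming from $\arg t_i=-\pi$ and rewriting the $\G(\a_i+\sum_j m_{ij})$ that appear naturally as $\G(-\a_i-\sum_j m_{ij}+1)$ via the reflection formula $\G(z)\G(1-z)=\pi/\sin\pi z$, the stated prefactor times $S(\a;x)$ emerges; this is the computation of \cite[Proposition 6.1]{GM-PC}.

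To extend the identity to $x_0\in X_0^{(1)}$ with $x_{\imath\jmath}=0$, I would observe that both sides are continuous (indeed, real-analytic) functions of the entries $x_{ij}$ in a real neighbourhood of $0$. The integral $F(\a,x)$ is continuous by dominated convergence, since by the first paragraph its integrand on the compact simplex is uniformly bounded for small $x$; the right-hand side is a convergent power series in the $x_{ij}$ which at $x_{\imath\jmath}=0$ reduces to the sum restricted to $m_{\imath\jmath}=0$, precisely the stated convention $(x^0_{\imath\jmath})^{m_{\imath\jmath}}=\de_{m_{\imath\jmath},0}$. Since the identity holds on the dense subset $X^{(0)}$, it extends by continuity to all of $X^{(0)}\cup X_0^{(1)}$. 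The main concern would be that the combinatorial degeneration of $\cA_{\tx}$ might introduce new singularities of the integrand near $\triangle$, but the first paragraph rules this out: the degeneration occurs among hyperplanes bounded away from the closed simplex, so the dominated-convergence step is routine and the continuity argument goes through.
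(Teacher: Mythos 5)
Your proposal is correct and takes essentially the same route as the paper, which simply invokes \cite[Proposition 6.1]{GM-PC} for $x\in X^{(0)}$ and obtains the case $x\in X_0^{(1)}$ ``by setting $x_{\imath\jmath}=0$'' --- precisely the specialization-by-continuity argument you spell out, together with the check that the degenerating intersection point stays away from the closed simplex. The only small imprecision is the claim that the integrand is uniformly bounded on the compact simplex: it blows up like $\prod_i |t_i|^{\re\,\a_i-1}$ near the faces $t_i=0$, but since only the $x$-dependent factors $L_{k+j}^{\a_{k+j}}$ need to be bounded uniformly in $x$ and the remaining density is a fixed integrable function, your dominated-convergence step still goes through.
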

We say that $S(\a ;x)$ is the \textit{hypergeometric series}. 
Hereafter, we consider 
\begin{align*}
  x_0 =(x^0_{ij}) \in X_0^{(1)} ,\quad x^0_{\imath \jmath} =0 
\end{align*}
for an application. In this case, we have 
\begin{align*}
  J^{\van}
  &=\{ 0,1,2,\ldots ,k ,k+\jmath \} -\{ \imath \} \\
  &=\la 0,1,2,\ldots ,\imath -1 ,k+\jmath ,\imath +1 ,\ldots ,k \ra , 
\end{align*}
because $|\tx_0 \la 0,1,2,\ldots ,\imath -1,k+\jmath ,\imath +1 ,\ldots ,k \ra | =x^0_{\imath \jmath}$. 

We take a basis of twisted cohomology groups as 
$\{ \f\la J\ra \mid J\in \cJ^{\circ} \}$, 
where $\cJ^{\circ}=\under{k+n+1}{\cJ}{0} -\{ J^{\van}\}$ 
(we set $j_0=0\in J^{\van}$ and $q=k+n+1 \not\in J^{\van}$ in Subsection \ref{subsection:contiguity-integral}). 
Replacing $\si$ with $\triangle$, we consider the vector
\begin{align*}
  \bF(\a;\tx_0 )={\TP{\left( \dots, \int_{\triangle} u_{z_0}(t)\f\la J\ra ,\dots \right)}}_{J\in \cJ^{\circ}} .
\end{align*}
The following is obtained in a similar manner to that explained in \cite{GM-PC}. 
\begin{lemma}[\cite{GM-PC}]
  \label{lem:bold-S}
  We set 
  \begin{align*}
    \bS(\a;x_0)
    =e^{\pi \sqrt{-1}(\a_1 +\cdots +\a_k)} \cdot 
    \prod_{i=1}^k \frac{1}{\G(\a_i)\G(-\a_i+1)} \cdot 
    \prod_{j=1}^{n+1}\frac{1}{\G(\a_{k+j} +1)} \cdot \bF(\a, \tx_0) .
  \end{align*}
  If $J=\{ 0,1,\dots ,k \}$, then the $J$-th entry of $\bS(\a;x_0)$ is $S(\a;x_0)$. 
  Otherwise, $J\in \cJ^{\circ}$ is expressed as 
  \begin{align*}
    J=\Big( \{ 0,1,\dots ,k \} -\{ i_1 ,\dots ,i_p \}\Big) 
      \cup \{ k+j_1 ,\dots ,k+j_p \} ,\\
    1\leq i_1< \cdots <i_p\leq k,\ 1\leq j_1< \cdots <j_p\leq n ,
  \end{align*}
  and the $J$-th entry of $\bS(\a;x_0)$ is 
  \begin{align*}
    \pm \dfrac{|\tx \la J \ra |}
    {\prod_{s=1}^p \a_{k+j_s}}
    \cdot \frac{\pa^p S(\a ;x_0)}{\pa x^0_{i_1 j_1}\cdots \pa x^0_{i_p j_p}} ,
  \end{align*}
  where we choose the pairs $(i_1,j_1),\dots ,(i_p,j_p)$ such that 
  none of them are $(\imath ,\jmath )$. 
  For the rule to determine this sign, refer to \cite[Section 4]{GM-PC}.
\end{lemma}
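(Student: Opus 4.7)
The plan is to follow the strategy of \cite{GM-PC}, adapting it to the degeneration $x_{\imath\jmath}=0$. For the special case $J=\{0,1,\ldots,k\}$ the claim is immediate: dividing the series expansion in the previous lemma by the scalar prefactor appearing in the definition of $\bS(\a;x_0)$ extracts exactly $S(\a;x_0)$ as the $J$-th component of $\bS(\a;x_0)$.

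For a general $J\in \cJ^{\circ}$ of the stated form, I would first derive the rational identity
\begin{align*}
\f\la J\ra \;=\; \pm\, |\tx \la J\ra| \cdot \frac{\prod_{s=1}^{p} t_{i_s}}{\prod_{s=1}^{p} L_{k+j_s}} \cdot \f\la 0\,1\cdots k\ra
\end{align*}
by a direct comparison of the two sides via (\ref{eq:phi}); the sign is determined by the column-reordering rule recalled in \cite[Section 4]{GM-PC}. Since $L_{k+j}$ depends on $x_{i'j'}$ only when $j'=j$ and the $j_s$ are pairwise distinct by construction, the logarithmic derivative identity $\pa u_{\tx}/\pa x_{ij}=\a_{k+j}\cdot(t_i/L_{k+j})\cdot u_{\tx}$ iterates cleanly to
\begin{align*}
\prod_{s=1}^{p}\frac{t_{i_s}}{L_{k+j_s}}\cdot u_{\tx}(t)
\;=\; \frac{1}{\prod_{s=1}^{p}\a_{k+j_s}}\cdot\frac{\pa^{p}\, u_{\tx}(t)}{\pa x_{i_1 j_1}\cdots \pa x_{i_p j_p}}.
\end{align*}
Exchanging each $\pa/\pa x_{i_s j_s}$ with the integral over the bounded chamber $\triangle$ would then convert $\int_{\triangle} u_{\tx}\,\f\la J\ra$ into $\pm |\tx\la J\ra|/\prod_s \a_{k+j_s}$ times $\pa^{p}F(\a,x)/(\pa x_{i_1 j_1}\cdots \pa x_{i_p j_p})$.

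Finally, I would substitute the series expansion for $F(\a,x)$ from the previous lemma, divide through by the prefactor defining $\bS(\a;x)$, and evaluate at $x=x_0$. The hypothesis that the pairing $(i_s,j_s)$ avoids $(\imath,\jmath)$---which is always achievable precisely when $J\neq J^{\van}$, since then one has genuine freedom in how to match the removed $i$-indices with the added $j$-indices---guarantees that differentiation and the termwise convention $(x^{0}_{\imath\jmath})^{m_{\imath\jmath}}=0$ for $m_{\imath\jmath}\neq 0$ remain compatible, so that the specialized series realizes the claimed $J$-th entry of $\bS(\a;x_0)$.

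The two technical obstacles are sign bookkeeping and the interchange of differentiation with integration in the degenerate limit. The sign rule is purely combinatorial and is already codified in \cite[Section 4]{GM-PC}. The interchange and the limit $x\to x_0$ require a dominated-convergence argument: the standard simplex $\triangle$ stays uniformly bounded away from the hyperplanes $(L_{k+j}=0)$ for $x$ sufficiently close to $x_0$ (including at $x_0$ itself, where only the configuration at infinity changes), so $u_{\tx}(t)$ and all its $x_{i_s j_s}$-derivatives are uniformly integrable on $\triangle$, which legitimizes both the swap of $\pa/\pa x_{i_s j_s}$ with $\int_{\triangle}$ and the termwise specialization $x\to x_0$ in the final series.
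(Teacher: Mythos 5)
Your argument is correct and follows the same route the paper intends: the paper gives no independent proof of this lemma, deferring to \cite[Section 4]{GM-PC}, and your reconstruction --- rewriting $\f\la J\ra$ via (\ref{eq:phi}) as $\pm|\tx\la J\ra|\prod_s(t_{i_s}/L_{k+j_s})\,\f\la 01\cdots k\ra$, converting the product into $x$-derivatives of $u_{\tx}$, interchanging with $\int_\triangle$, and specializing the series at $x_0$ --- is exactly that argument adapted to the degenerate case. Your observation that $\prod_s t_{i_s}/L_{k+j_s}$ is independent of the pairing (so one may always repair to avoid $(\imath,\jmath)$ once $J\neq J^{\van}$) correctly accounts for the only new point in the degenerate setting.
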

Note that $\displaystyle \pm \frac{\pa S(\a ;x_0)}{\pa x^0_{\imath \jmath}}$ does not appear 
because $J^{\van}\not\in \cJ^{\circ}$. 
\begin{exam}
  We consider the case when 
  $k=n=2$ and $x^0_{11}=0$ ($\imath =\jmath =1$, $J^{\van}=\{0,2,3\}$). 
  As in \cite[Example 7.10]{GM-PC}, if $x\in X^{(0)}$, we have
  \begin{align*}
    &S(\a; x) \\
    &=(\textrm{constant})\cdot \int_{\triangle} \prod_{j=1}^5 L_j^{\a_j} \cdot \tr \bigl( 
      \f \la 012 \ra , \f \la 013 \ra , \f \la 014 \ra ,  
      \f \la 023 \ra , \f \la 024 \ra , \f \la 034 \ra 
      \bigr) \\    
    &=\tr \Bigl(
      S(\a;x) ,
      \frac{x_{21}}{\a_3}\cdot \frac{\pa S(\a;x)}{\pa x_{21}}, 
      \frac{x_{22}}{\a_4}\cdot \frac{\pa S(\a;x)}{\pa x_{22}}, \\
    & \qquad \qquad  
      \frac{-x_{11}}{\a_3}\cdot \frac{\pa S(\a;x)}{\pa x_{11}},
      \frac{-x_{12}}{\a_4}\cdot \frac{\pa S(\a;x)}{\pa x_{12}}, 
      \frac{x_{11}x_{22}-x_{12}x_{21}}{\a_3 \a_4}\cdot \frac{\pa^2 S(\a;x)}{\pa x_{11} \pa x_{22}}
      \bigr) .
  \end{align*}
  When we consider $S(\a;x_0)$, the fourth entry, which corresponds to $\f \la J^{\van} \ra$,
  is deleted. 
  The limit of the sixth entry as $x_{11} \to 0$ cannot be taken directly. 
  However, it is known that 
  \begin{align*}
    \frac{\pa^2 S(\a;x)}{\pa x_{11} \pa x_{22}} =\frac{\pa^2 S(\a;x)}{\pa x_{12} \pa x_{21}}
  \end{align*}
  (see \cite[Section 4]{GM-PC}). Thus, we have 
  \begin{align*}
    S(\a; x_0) 
    &=\tr \Bigl(
      S(\a;x_0) ,
      \frac{x^0_{21}}{\a_3}\cdot \frac{\pa S(\a;x_0)}{\pa x^0_{21}}, 
      \frac{x^0_{22}}{\a_4}\cdot \frac{\pa S(\a;x_0)}{\pa x^0_{22}}, \\
    & \qquad \qquad  
      \frac{-x^0_{12}}{\a_4}\cdot \frac{\pa S(\a;x_0)}{\pa x^0_{12}}, 
      \frac{-x^0_{12}x^0_{21}}{\a_3 \a_4}\cdot \frac{\pa^2 S(\a;x_0)}{\pa x^0_{12} \pa x^0_{21}}
      \bigr) 
  \end{align*}
  by taking the limit $x_{11}\to 0$. 
\end{exam}
In general, we can retake the pairs $(i_1,j_1),\dots ,(i_p,j_p)$ in Lemma \ref{lem:bold-S} 
so that none of them are $(\imath ,\jmath )$. 

Applying the results in Subsection \ref{subsection:contiguity-integral}, 
we obtain the contiguity relations for $\bS(\a;x_0)$. 
\begin{cor}
  \label{cor:contiguity-S}
  We set $\conti{l}^0(\a;x_0)=\conti{l}(\a;\tx_0)$, as given in 
  Theorem \ref{th:contiguity} (recall that we set $j_0=0$ and $q=k+n+1$). 
  For $1\leq i \leq k$ and $1\leq j \leq n+1$, we have
  \begin{align*}
    \bS(\a^{(i)};x_0)&=\conti{i}^0(\a;x_0) \bS(\a;x_0), \\
    \bS(\a^{(k+j)};x_0)&=\frac{1}{\a_{k+j}+1}\conti{k+j}^0(\a;x_0) \bS(\a;x_0) .
  \end{align*}
\end{cor}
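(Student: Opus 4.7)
The plan is to bootstrap Corollary \ref{cor:contiguity-S} directly from the contiguity relation for the integral vector, namely $\bF(\a^{(l)};\tx_0) = \conti{l}(\a;\tx_0)\bF(\a;\tx_0)$, which is precisely Theorem \ref{th:contiguity} applied with $z_0 = \tx_0$, $j_0 = 0$, and $q = k+n+1$ (this matches the basis choice $\cJ^{\circ}=\under{k+n+1}{\cJ}{0}-\{J^{\van}\}$ used to build $\bF(\a;\tx_0)$). By Lemma \ref{lem:bold-S}, the series vector is obtained from the integral vector by a common scalar multiplier
$$G(\a) = e^{\pi\sqrt{-1}(\a_1+\cdots+\a_k)}\prod_{i=1}^k\frac{1}{\G(\a_i)\G(-\a_i+1)}\prod_{j=1}^{n+1}\frac{1}{\G(\a_{k+j}+1)},$$
so that $\bS(\a;x_0) = G(\a)\bF(\a;\tx_0)$. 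Substituting this into the contiguity relation gives
$$\bS(\a^{(l)};x_0) = \frac{G(\a^{(l)})}{G(\a)}\,\conti{l}^0(\a;x_0)\,\bS(\a;x_0),$$
reducing everything to the computation of the ratio $G(\a^{(l)})/G(\a)$.

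Since $\a^{(l)} = \a + e_l - e_0$ and $G(\a)$ does not depend on $\a_0$, only the shift in the $l$-th coordinate contributes, and I would split into the two index ranges appearing in the statement. For $l = i$ with $1\leq i\leq k$, the exponential prefactor picks up $e^{\pi\sqrt{-1}}=-1$, while the identity $\G(x+1)=x\G(x)$ yields $\G(\a_i+1)\G(-\a_i) = \a_i\G(\a_i)\cdot(-\G(-\a_i+1)/\a_i)=-\G(\a_i)\G(-\a_i+1)$, producing a second factor of $-1$; these cancel, giving $G(\a^{(i)})/G(\a)=1$ and the first formula. For $l = k+j$ with $1\leq j\leq n+1$, neither the exponential nor the first product is touched; the single factor $\G(\a_{k+j}+1)$ becomes $\G(\a_{k+j}+2)=(\a_{k+j}+1)\G(\a_{k+j}+1)$, whence $G(\a^{(k+j)})/G(\a) = 1/(\a_{k+j}+1)$ and the second formula.

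There is no real obstacle here: the calculation is mechanical once Theorem \ref{th:contiguity} and Lemma \ref{lem:bold-S} are in hand. The only points that deserve care are the sign bookkeeping in the first case (where the sign from the exponential exactly absorbs the sign produced by the gamma shift) and the verification that the base-point $(j_0,q)=(0,k+n+1)$ implicit in the definition $\conti{l}^0(\a;x_0)=\conti{l}(\a;\tx_0)$ is consistent with the basis used in Lemma \ref{lem:bold-S} to express $\bS(\a;x_0)$. Both are immediate from the conventions fixed in Subsection \ref{subsection:contiguity-integral}.
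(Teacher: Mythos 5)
Your proposal is correct and follows exactly the route the paper intends: combine the integral-vector contiguity relation $\bF(\a^{(l)};\tx_0)=\conti{l}(\a;\tx_0)\bF(\a;\tx_0)$ from Theorem \ref{th:contiguity} with the scalar relation $\bS(\a;x_0)=G(\a)\bF(\a;\tx_0)$ from Lemma \ref{lem:bold-S}, and evaluate the ratio $G(\a^{(l)})/G(\a)$ via $\G(x+1)=x\G(x)$, with the sign from the exponential prefactor cancelling the sign from the gamma shift when $1\leq l\leq k$. The computation of the two ratios ($1$ and $1/(\a_{k+j}+1)$) is accurate, so the argument is complete.
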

Similar to \cite[Section 7]{GM-PC}, 
we can utilize these contiguity relations to evaluate the normalizing constants 
of the two-way contingency tables 
with fixed marginal sums that have exactly one zero cell.

\subsection{Inverse of some intersection matrices}
To implement such evaluations, 
we need explicit expressions for the inverse of the intersection matrices
\begin{align*}
  &C=\Bigl( \cI^c_0 (\f \la I \ra , \f \la J \ra )\Bigr)_{I,J\in \cJ^{\circ}} , \\
  &P_l=\Bigl( \cI^c_0 (\f \la I \ra , \f \la J \ra ) 
    \Bigr)_{I\in \under{j_0}{\cJ^{\circ}}{l},J\in \cJ^{\circ}} ,
    Q_l=\Bigl( \cI^c_0 (\f \la I \ra , \f \la J \ra ) 
    \Bigr)_{I\in \under{l}{\cJ^{\circ}}{j_0},J\in \cJ^{\circ}} ,
\end{align*}
where $\cJ^{\circ} =\under{q}{\cJ}{j_0}-\{ J^{\van} \}$. 
We set 
\begin{align*}
  R_l = \Bigl( \cI^c_0 (\f \la I \ra , \f \la J \ra ) 
  \Bigr)_{I\in \under{j_0}{\cJ^{\circ}}{l},J\in \under{l}{\cJ^{\circ}}{j_0}} .
\end{align*}
By the same argument used in the proof of \cite[Proposition A.1]{GM-PC}, 
we have 
\begin{align*}
  &C^{-1}=R_q^{-1} \cdot 
    \Bigl( \cI^c_0 (\f \la I \ra , \f \la J \ra )\Bigr)_{I,J\in \under{j_0}{\cJ^{\circ}}{q}} 
    \cdot R_l^{-1}, \\
  &P_l^{-1}=R_q^{-1} \cdot 
    \Bigl( \cI^c_0 (\f \la I \ra , \f \la J \ra ) 
    \Bigr)_{I\in \under{j_0}{\cJ^{\circ}}{q},J\in \under{l}{\cJ^{\circ}}{j_0}} 
    \cdot R_l^{-1}, \\
  &Q_l^{-1}=R_q^{-1} \cdot 
    \Bigl( \cI^c_0 (\f \la I \ra , \f \la J \ra ) 
    \Bigr)_{I\in \under{j_0}{\cJ^{\circ}}{q},J\in \under{j_0}{\cJ^{\circ}}{l}} 
    \cdot R_l^{-1}.
\end{align*}
Thus, it suffices to give an explicit expression of $R_l^{-1}$. 
As in Subsection \ref{subsection:contiguity-integral}, 
we align the elements in $\under{l}{\cJ^{\circ}}{j_0}$ and $\under{j_0}{\cJ^{\circ}}{l}$
according to the correspondence 
\begin{align*}
  \under{l}{\cJ^{\circ}}{j_0} \ni 
  J\leftrightarrow \under{j_0}{J}{l}
  \in \under{j_0}{\cJ^{\circ}}{l} .
\end{align*}
\begin{proposition}
  Let $N_l=(n_{IJ})_{I\in \under{j_0}{\cJ^{\circ}}{l},J\in \under{l}{\cJ^{\circ}}{j_0}}$ 
  be a matrix defined as follows. 
  \begin{enumerate}[(i)]
  \item If $l\not\in J^{\van}$, then $N_l=O$. 
  \item If $l\in J^{\van}$, then
    \begin{align*}
      n_{IJ}=
      \begin{cases}
        \frac{\prod_{j\in J^{\van}-\{ j_0,l\}} \a_j}{\a_q} \cdot \a_{p_1} \a_{p_2} & 
        (I=\under{j_0}{(\under{l}{J^{\van}}{p_1})}{l},\ J=\under{l}{J^{\van}}{p_2}), \\
        0 & (\textrm{otherwise}) ,
      \end{cases}
    \end{align*}
    where $p_1,p_2\not\in J^{\van}\cup \{ q\}$.
  \end{enumerate}
  Then, $R_l^{-1}$ is expressed as
  \begin{align*}
    R_l^{-1} =\frac{1}{(\tpi)^k } \cdot \left(
    \diag \left( \dots, \prod_{j\in J-\{ j_0\}} \a_j ,\dots 
    \right)_{J\in \under{l}{\cJ^{\circ}}{j_0}} +N_l \right).
  \end{align*}
\end{proposition}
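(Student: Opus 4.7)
The plan is to use Corollary \ref{cor-intesection-cohomology} to express $R_l$ as a rank-at-most-one perturbation of a ``diagonal'' matrix, and then invert it via the Sherman--Morrison formula. Setting
\begin{align*}
c = \cI^c(\f^{\van}_+, \f^{\van}_-),\quad u_I = \cI^c(\f\la I\ra, \f^{\van}_-),\quad v_J = \cI^c(\f^{\van}_+, \f\la J\ra),
\end{align*}
Corollary \ref{cor-intesection-cohomology} gives $R_l = D - c^{-1} u v^T$, where $D_{IJ} = \cI^c(\f\la I\ra, \f\la J\ra)$.

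The first task is to analyze $D$, $u$, $v$ using Fact \ref{fact:intersection}. Because $I \in \under{j_0}{\cJ^{\circ}}{l}$ contains $l$ but not $j_0$ while $J \in \under{l}{\cJ^{\circ}}{j_0}$ contains $j_0$ but not $l$, the combinatorial condition $\#(I \cap J) = k$ forces $I = \under{j_0}{J}{l}$; hence $D$ is supported on the ``diagonal'' pairs (under the identification $J \leftrightarrow \under{j_0}{J}{l}$) with values $(\tpi)^k/\prod_{j\in J-\{j_0\}}\a_j$, so $D^{-1}$ already produces the claimed main diagonal term. Similarly $u_I \neq 0$ requires $\#(I \cap J^{\van}) = k$, which with $l \in I$ and $j_0 \notin I$ forces $I = \under{j_0}{J^{\van}}{p_1}$ for some $p_1 \notin J^{\van}$. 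In case (i) the single candidate $I = \under{j_0}{J^{\van}}{l}$ is precisely the index excluded from $\under{j_0}{\cJ^{\circ}}{l}$ in its definition, so $u \equiv 0$; then $R_l = D$ and the claim holds with $N_l = O$. In case (ii) both $u$ and $v$ have non-vanishing entries, indexed exactly by $p_1, p_2 \notin J^{\van} \cup \{q\}$ (since $p_i = q$ is the other excluded index).

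In case (ii), Sherman--Morrison gives
\begin{align*}
R_l^{-1} = D^{-1} + \frac{D^{-1} u v^T D^{-1}}{c - v^T D^{-1} u}.
\end{align*}
A direct substitution of the values from Fact \ref{fact:intersection} into $(D^{-1}u)_J$ and $(v^T D^{-1})_I$ shows that the rank-one correction, at the pair $I = \under{j_0}{J^{\van}}{p_1}$, $J = \under{l}{J^{\van}}{p_2}$, factors as a common scalar times $\a_{p_1}\a_{p_2}$, reproducing the support and multiplicative shape of $N_l$. The crucial simplification is
\begin{align*}
c - v^T D^{-1} u = \pm\,\frac{(\tpi)^k \a_q}{\prod_{j\in J^{\van}}\a_j},
\end{align*}
which is forced by the global constraint $\sum_{j=0}^{k+n+1}\a_j = 0$ from (\ref{eq:non-int}): one computes $v^T D^{-1} u$ proportional to $\sum_{p\notin J^{\van}\cup\{q\}}\a_p$, and this sum combines with $\sum_{j\in J^{\van}}\a_j$ in $c$ to leave exactly $\pm\a_q$. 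This is precisely where the factor $\a_q$ in the denominator of $n_{IJ}$ arises, and matching the remaining product of $\a_j$'s to $\prod_{j\in J^{\van}-\{j_0,l\}}\a_j$ is routine.

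The main obstacle is careful sign bookkeeping. Each application of Fact \ref{fact:intersection} introduces a $(-1)^{p+q'}$ depending on the chosen ordering of tuples in $\cJ$, and the identification $J \leftrightarrow \under{j_0}{J}{l}$ typically reorders the underlying set, inducing a sign in $\f\la\cdot\ra$ by alternation of the wedge product. Ensuring that these signs conspire so that the scalar simplification above produces $\a_q$ (rather than an unhelpful combination such as $2\sum_{j\in J^{\van}}\a_j + \a_q$) requires fixing a consistent ordering convention on $\cJ^{\circ}$, $\under{j_0}{\cJ^{\circ}}{l}$, and $\under{l}{\cJ^{\circ}}{j_0}$, and then tracking the resulting signs through the Sherman--Morrison calculation.
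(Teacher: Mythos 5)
Your proposal follows essentially the same route as the paper: Corollary \ref{cor-intesection-cohomology} exhibits $R_l$ as a diagonal matrix minus a rank-one correction, case (i) follows because the unique index $I$ with $\cI^c(\f\la I\ra,\f^{\van}_-)\neq 0$ is exactly the one excluded from $\under{j_0}{\cJ^{\circ}}{l}$, and case (ii) is the Sherman--Morrison inversion whose scalar denominator collapses to $-(\tpi)^k\a_q/\prod_{j\in J^{\van}}\a_j$ via $\sum_j\a_j=0$ (the paper obtains this by reusing the $c_2$ computation from Proposition \ref{prop:cohomology-basis-Z1-2} with $(l,l',p)$ replaced by $(l,j_0,q)$). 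The structure, the support analysis of the rank-one vectors, and the key simplification all match the paper's argument.
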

\begin{proof}
  By Corollary \ref{cor-intesection-cohomology}, 
  the intersection matrix $R_l$ is written as 
  \begin{align}
    \label{eq:expression-Rl-1}
    R_l = 
    &\diag\left( \dots, \cI^c(\f \la \under{j_0}{J}{l}\ra ,\f \la J\ra ),\dots 
      \right) \\
    \nonumber
    &-\frac{1}{\cI^c(\f^{\van}_{+},\f^{\van}_{-})} 
      \begin{pmatrix}
        \vdots \\ \cI^c(\f\la \under{j_0}{J}{l}\ra , \f^{\van}_{-}) \\ \vdots
      \end{pmatrix}
    \begin{pmatrix}
      \cdots & \cI^c(\f^{\van}_{+},\f \la J\ra) & \cdots
    \end{pmatrix} ,
  \end{align}
  where the index $J$ runs over $\under{l}{\cJ^{\circ}}{j_0}$. 
  First, we assume $l\not\in J^{\van}$. 
  We then have that $\cI^c(\f\la J \ra , \f^{\van}_{-}) =0$ 
  for any $J\in \under{j_0}{\cJ^{\circ}}{l}$ (see also the discussion
  following (\ref{eq:self-intersection-van-cohomology}); 
  $\under{j_0}{\cJ^{\circ}}{l}$ coincides with $\cJ^{\perp}$). 
  Hence, the column vector in (\ref{eq:expression-Rl-1}) is zero, and 
  $R_l^{-1}$ is easily obtained from 
  \begin{align*}
    \cI^c(\f \la \under{j_0}{J}{l}\ra ,\f \la J\ra )
    =\frac{(\tpi)^k}{\prod_{j\in J-\{ j_0\}} \a_j} .
  \end{align*}
  Next, we assume that $l\in J^{\van}$. 
  By a straightforward calculation, we can write
  the inverse of the matrix in (\ref{eq:expression-Rl-1}) as 
  \begin{align}
    \label{eq:expression-Rl-2}
    R_l^{-1}
    =&\diag\left( \dots,\frac{1}{\cI^c(\f \la \under{j_0}{J}{l}\ra ,\f \la J\ra )} ,\dots 
       \right) \\
    \nonumber
     &+\frac{1}{c_2}
       \begin{pmatrix}
         \vdots \\ \frac{\cI^c(\f\la \under{j_0}{J}{l}\ra , \f^{\van}_{-})}{
           \cI^c(\f \la \under{j_0}{J}{l}\ra ,\f \la J\ra )} \\ \vdots
       \end{pmatrix}
    \begin{pmatrix}
      \cdots & \frac{\cI^c(\f^{\van}_{+},\f \la J\ra)}{
          \cI^c(\f \la \under{j_0}{J}{l}\ra ,\f \la J\ra )} & \cdots
    \end{pmatrix} ,
  \end{align}
  where 
  \begin{align*}
    c_2 = \cI^c(\f^{\van}_{+},\f^{\van}_{-})
    -\sum_{J\in \under{l}{\cJ^{\circ}}{j_0}}
    \frac{\cI^c(\f\la \under{j_0}{J}{l}\ra , \f^{\van}_{-}) \cdot \cI^c(\f^{\van}_{+},\f \la J\ra)}{
    \cI^c(\f \la \under{j_0}{J}{l}\ra ,\f \la J\ra )}.
  \end{align*}
  As this is nothing but $c_2$ in the proof of Proposition \ref{prop:cohomology-basis-Z1-2} 
  with $(l,l',p)$ replaced by $(l,j_0,q)$, 
  we have 
  \begin{align*}
    c_2 =-\frac{(\tpi)^k \a_q}{\prod_{j\in J^{\van}}\a_j} .
  \end{align*}
  We consider the second term on the right-hand side of (\ref{eq:expression-Rl-2}).
  For $p_1,p_2\not\in J^{\van}\cup \{ q\}$, 
  the $(\under{j_0}{(\under{l}{J^{\van}}{p_1})}{l},\under{l}{J^{\van}}{p_2})$-entry is 
  \begin{align*}
    &\frac{1}{c_2} \cdot  
      \frac{\cI^c(\f\la \under{j_0}{(\under{l}{J^{\van}}{p_1})}{l} \ra , \f^{\van}_{-})\ }{
      \frac{(\tpi)^k}{\prod_{j\in (J^{\van}-\{ j_0 ,l\})\cup \{ p_1\}} \a_j}} \cdot 
      \frac{\ \cI^c(\f^{\van}_{+},\f \la \under{l}{J^{\van}}{p_2}\ra)\ }{
      \frac{(\tpi)^k}{\prod_{j\in (J^{\van}-\{ j_0 ,l\})\cup \{ p_2\}} \a_j}} \\
    &=-\frac{\prod_{j\in J^{\van}}\a_j}{(\tpi)^k \a_q} 
      \cdot \frac{-\a_{p_1}}{\a_l} \cdot \frac{\a_{p_2}}{\a_{j_0}}
      =\frac{1}{(\tpi)^k}\cdot 
      \frac{\prod_{j\in J^{\van}-\{ j_0 ,l\}} \a_j}{\a_q} \cdot \a_{p_1} \a_{p_2}, 
  \end{align*}
  and the others are zero. 
  Thus, we obtain the proposition. 
\end{proof}


\begin{thebibliography}{99}
%
%
\bibitem{AK}
K. Aomoto and M. Kita, translated by K. Iohara, 
\emph{
Theory of Hypergeometric Functions
} 
(Springer Verlag, New York, 2011). 
%
\bibitem{CM}
  K. Cho and K. Matsumoto, 
  Intersection theory for twisted cohomologies and 
  twisted Riemann's period relations, 
  \emph{I. Nagoya Math. J.}, \textbf{139} (1995), 67--86.
%
\bibitem{DT}
  A. Douai and H. Terao, 
  The determinant of a hypergeometric period matrix,   
  {\it Invent. Math.}, {\bf 128} (1997), 417--436. 
%
\bibitem{FT}
  M. Falk and H. Terao, 
  $\beta{\bf nbc}$--bases for cohomology of local systems on hyperplane complements, 
  {\it Transactions AMS}, {\bf 349} (1997), 189--202. 
%
  %
%
\bibitem{GM-PC}
Y. Goto and K. Matsumoto, 
Pfaffian equations and contiguity relations
  of the hypergeometric function of type
  $(k+1, k+n+2)$ and their applications, 
  to appear in \emph{Funkcial. Ekvac.}  
%
%
\bibitem{KitaNoumi}
  M. Kita and M. Noumi, 
  On the structure of cohomology groups attached to integrals
  of certain many-valued analytic functions, 
  \emph{Japan. J. Math.}, \textbf{9} (1983), 113--157.
%
\bibitem{KY}
  M. Kita and M. Yoshida, 
  Intersection theory for twisted cycles I, II,   
  \emph{Math. Nachr.},
  \textbf{166}  (1994), 287--304, 
  \textbf{168} (1994), 171--190.
%
\bibitem{M1}
K. Matsumoto, 
Intersection numbers for logarithmic $k$-forms, 
\emph{Osaka J. Math.}, \textbf{35} (1998), 873--893. 
%
%
%
%
%
%
%
\bibitem{TGKT}
  Y. Tachibana, 
  Y. Goto,
  T. Koyama, and
  N. Takayama, 
  Holonomic Gradient Method for Two Way Contingency Tables, 
  preprint. arXiv:1803.04170. 
%
%
%
\bibitem{Y} M. Yoshida, 
  {\it Hypergeometric functions, my love, -Modular interpretations of configuration spaces-} 
  (Vieweg \& Sohn, Braunschweig, 1997).
\end{thebibliography}
\end{document}